\newtheorem{theorem}{Theorem}  [section] \numberwithin{equation}{section}
\newtheorem{corollary}[theorem]{Corollary}
\newtheorem{proposition}[theorem]{Proposition}
\newtheorem{lemma}[theorem]{Lemma}
\newtheorem{conjecture}[theorem]{Conjecture}
\newtheorem{exam}{Example}
\newenvironment{example}{\begin{exam}\sf}{\end{exam}}
\newtheorem*{rem}{Remarks}
\newenvironment{remark}{\begin{rem}\sf}{\end{rem}}
\tikzstyle{vertex}=[circle, draw, inner sep=0pt, minimum size=4pt]
\newcommand{\vertex}{\node[vertex]}
\tikzset{mid arrow/.style={postaction={decorate,very thick, decoration={
        markings,
        mark=at position 0.6 with {\arrow{#1}}
    }}}}
\renewcommand\emptyset{\varnothing}
\newcommand\commentout[1]{}
\newcommand\Def[1]{{\bf #1}}
\newcommand\ehr{\operatorname{ehr}}
\newcommand{\rk}[1][\pi]{p_{#1}}
\newcommand{\maj}{\mathrm{maj}}
\newcommand{\des}{\mathrm{Des}}
\newcommand{\asc}{\mathrm{Asc}}
\newcommand{\desnum}{\mathrm{des}}
\newcommand{\ascnum}{\mathrm{asc}}
\newcommand{\rev}[1]{#1^{\mathrm{rev}}}
\newcommand{\op}[1]{#1^{\mathrm{op}}}
\newcommand{\order}[1]{\mathcal{O}(#1)}
\newcommand{\acyclic}[1]{\mathcal{A}(#1)}
\newcommand{\jhset}[1]{\mathcal{L}(#1)}
\newcommand{\ps}{\textbf{ps}}
\newcommand{\qbinom}[2]{\genfrac[]{0pt}{}{#1}{#2}_q}
\DeclarePairedDelimiter\ceil{\lceil}{\rceil}
\DeclarePairedDelimiter\floor{\lfloor}{\rfloor}
\newtheorem*{rep@theorem}{\rep@title}\newcommand{\newreptheorem}[2]{%
\newenvironment{rep#1}[1]{%
\def\rep@title{\bf #2 \ref{##1}}%
\begin{rep@theorem}}%
{\end{rep@theorem}}}
\newcounter{teach}
\begin{document}

\title{Generating functions of $q$-chromatic polynomials}

\author{Matthias Beck}
\address{Department of Mathematics\\
          San Francisco State University\\
          San Francisco, CA 94132}
 \email{mattbeck@sfsu.edu}
 \urladdr{https://matthbeck.github.io/}

 \author{Benjamin Braun}
 \address{Department of Mathematics\\
          University of Kentucky\\
         Lexington, KY 40506}
\email{benjamin.braun@uky.edu}
 \urladdr{https://sites.google.com/view/braunmath/}

 \author{Alvaro Cornejo}
 \address{Department of Mathematics\\
          University of Kentucky\\
         Lexington, KY 40506}
\email{alvaro.cornejo@uky.edu}
 \urladdr{https://sites.google.com/view/alvaro-cornejo/home}





\date{25 September 2025}

\begin{abstract}
    Given a graph $G=(V,E)$ and a linear form $\lambda \in \mathbb{Z}_{ > 0 }^V$, Bajo et al.~(2025) introduced the
$q$-chromatic polynomial $\chi_G^\lambda(q,n) := \sum q^{\sum_{v \in V} \lambda_v c(v)}$ where the sum is over all
proper colorings $c: V \to [n] := \{ 1, 2, \dots, n \}$; they showed that $\chi_G^\lambda(q,n)$ is a polynomial
in $[n]_q := 1 + q + \dots + q^{ n-1 } $ with coefficients in $\mathbb{Z}(q)$.
    For $d \in \mathbb{Z}_{>0}$ and the linear form given by $(d,d^2,\ldots,d^d)$, we show that the $q$-chromatic polynomial distinguishes labeled graphs with vertex set $[d]$.
    Using permutation statistics introduced by Chung--Graham (1995), called $G$-statistics, and polyhedral geometry, we give the multivariate integer point transform for the region of proper
colorings of a given graph~$G$.
    This integer point transform allows us to find the generating function for the $q$-chromatic polynomial with respect to any linear form.
    We further specialize these results to the linear form $\mathbbm{1} := (1, 1, \dots, 1)$, which allows us to write the $q$-chromatic polynomial in the $q$-binomial basis, clarifying expressions found by Bajo et al.
    Moreover, we show that $G$-statistics are compatible with the theory of order polytopes used by Bajo et
al.\ and Chow (1999).
    This yields further properties for the generating function of $q$-chromatic polynomial with linear form $\mathbbm{1}$, where certain coefficients of the numerator polynomial are palindromic polynomials in $q$.
\end{abstract}

\maketitle



\section{Introduction}

For a graph $G=(V,E)$ and a linear form $\lambda \in \mathbb{Z}_{ >0 }^V$, Bajo et al.~\cite{qchromatic}
introduced a $q$-analogue of the chromatic polynomial,
\[
    \chi_G^\lambda(q,n) := \sum_{\substack{\text{proper colorings} \\ c:V \to [n]}} q^{\sum_{v \in V} \lambda_v c(v)},
\]
which we call the \Def{$q$-chromatic polynomial} of $G$, due to the fact that $\chi_G^\lambda(q,n)$ is a polynomial
in $[n]_q := 1 + q + \dots + q^{ n-1 } $ with coefficients in $\mathbb{Z}(q)$~\cite{qchromatic}.
For $V=\{v_1,\ldots,v_d\}$ and linear form $\mathbbm{1} := (1, 1, \dots, 1)$, the $q$-chromatic polynomial is known
to be the \Def{principal specialization} $X_G(q,q^2,\hdots,q^n,0,0,\hdots)$ of Stanley's \Def{chromatic symmetric function}~\cite{stanleychromatic} 
\[
    X_G(x_1,x_2,\ldots) := \sum_{\substack{\text{proper colorings} \\ c:V \to \mathbb{Z}_{>0}}} x_{c(v_1)} x_{c(v_2)} \cdots x_{c(v_d)}.
\]
Stanley conjectured that the chromatic symmetric function distinguish trees; Loehr and
Warrington~\cite{loehrwarrington} conjectured, more strongly, that the principal specialization $X_G(q,q^2,\hdots,q^n,0,0,\hdots)$ distinguishes trees. 
Since the principal specialization is the $q$-chromatic polynomial with the particular linear form
$\mathbbm{1}$, Bajo et al.\ asked if there exists some linear form $\lambda \in \mathbb{Z}_{>0}^d$ such
that the $q$-chromatic polynomial distinguishes trees. In Theorem~\ref{thm:distinguishedlabeled}, we answer this question in the affirmative (for all graphs, not
just for trees), for
$\lambda = (d, d^2, \ldots, d^d)$ where $d = |V|$.

In Section~\ref{sec:gstats}, we express the generating function of the $q$-chromatic
polynomial of $G$ for any linear form $\lambda \in \mathbb{Z}_{ >0 }^V$ using polyhedral geometry and certain statistics on permutations and graphs. 
$G$-descents were introduced by Chung--Graham~\cite{chunggrahamcoverpol} in order to give a combinatorial
interpretation of the coefficients of the chromatic polynomial $\chi_G(n)$ when expressed in the basis
$\left\{\binom{n+k}{d} : 0 \leq k \leq d\right\}$.
Their proof was not combinatorial, however, such a proof was provided by Steingrímsson~\cite{steingrimssoncoloring}.
Using these ideas in conjunction with polyhedral geometry, we express a multivariate generating function of
graph colorings (Theorem~\ref{thm:multivariablechromaticgenerating} below). 
Consequently, we prove an expression for the generating function for the $q$-chromatic polynomial for any
linear form (Theorem~\ref{thm:genfuncweighted}).

In Section~\ref{sec:linearform1}, we apply our general expression of the $q$-chromatic generating function to the linear form $\mathbbm{1}\in
\mathbb{Z}^d$.
In particular, we give a rational expression of the generating function of $\chi_G^\mathbbm{1}(q,n)$ for any graph $G$ in Theorem~\ref{thm:qchromaticallones}.
From this theorem, we give a method to compute the coefficients of $\chi_G^\mathbbm{1}(q,n)$ when written in the basis
$\left\{\qbinom{n+k}{d} : 0 \leq k \leq d\right\}$ using $G$-statistics, yielding a $q$-analogue of Chung--Graham's expression of the chromatic polynomial in the binomial basis.
Moreover, Chow~\cite{chow1999} writes the chromatic symmetric function in terms of fundamental quasisymmetric
functions with $G$-statistics, and the generating function for the principal specialization of Chow's expression coincides with our generating function for the $q$-chromatic polynomial with linear form $\mathbbm{1}$, which we prove in Section~\ref{subsec:symmetricfunctions}.

In Section~\ref{sec:gstatsandorientations}, emulating work by Chow, we give a bijection
 between permutations and pairs of acyclic orientations and linear extensions of the induced poset (Theorem~\ref{thm:bijectiongstats}).
In addition, Bajo et al.\ show the $q$-chromatic polynomial can be expressed as a sum of $q$-Ehrhart polynomials of order polytopes.
Using their result, in Theorem~\ref{thm:qchromaticgenfunctiondoublesum} we express the generating function of the $q$-chromatic polynomial with linear form $\mathbbm{1}$ using pairs of acyclic orientations and linear extensions of induced posets which agrees with our previous method using $G$-statistics.
We use our bijection and different expressions of the generating function of the $q$-chromatic polynomial to prove the following:
\begin{enumerate}
    \item In Section~\ref{subsec:gmajorpoly}, we write the leading term of
$\widetilde{\chi}_G^\mathbbm{1}(q,x)$ as a single sum relating to the $G$-major index polynomial
(Corollary~\ref{cor:leadingcoef}). 
    Moreover, we show that the degree of the $G$-major index polynomial can be computed by minimizing $\sum_{v \in V} c(v)$ over all proper colorings $c$ in Corollary~\ref{cor:gmajordegree}.
    In particular, we give bounds for the degree of the $G$-major index polynomial for trees 
(Corollary~\ref{cor:treegmajordegree}).

    \item In Corollary~\ref{cor:palindromic} we prove that the coefficients of $\chi_G^\mathbbm{1}(q,n)$ when written in the basis  \\$\left\{\qbinom{n+k}{d} : 0 \leq k \leq d\right\}$
    are palindromic polynomials, up to a shift.
\end{enumerate} 


\section{The linear form $\lambda=(k,k^2,\ldots,k^d)$}\label{sec:genericlinearform}

Let $G=([d],E)$ be a graph, let $k \in \mathbb{Z}_{>0}$ such that $k \geq d$, and consider the linear form
$\lambda=(k,k^2,\ldots,k^d)$. We will now show that $\chi_G^\lambda(q,n)$ is unique for every graph $G$. 
The key tool will be the unique base-$k$ representation of a natural number.


\begin{theorem}\label{thm:distinguishedlabeled}
    For $d \in \mathbb{Z}_{>0}$, let $k \in \mathbb{Z}_{>0}$ such that $k \geq d$ and let $\lambda_i := k^i$. 
    If $G$ and $H$ are graphs on $[d]$ such that $\chi^\lambda_G(q,d-1) = \chi^\lambda_H(q,d-1)$, then $G=H$.
\end{theorem}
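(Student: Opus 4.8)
The plan is to show that the polynomial $\chi_G^\lambda(q,d-1)$, viewed as a polynomial in $q$, records enough information to reconstruct the edge set $E$. Recall that $\chi_G^\lambda(q,n) = \sum_c q^{\sum_{v} k^v c(v)}$, summed over proper colorings $c \colon [d] \to [n]$. With $n = d-1$ and any color function $c$, the exponent $\sum_{v=1}^d k^v c(v)$ with $1 \le c(v) \le d-1 \le k-1$ is precisely the base-$k$ representation of a natural number whose digit in position $v$ equals $c(v)$. Since each digit lies in $\{1, \dots, k-1\} \subseteq \{0, 1, \dots, k-1\}$, distinct color functions $c$ give distinct exponents, so $\chi_G^\lambda(q,d-1)$ is a sum of \emph{distinct} monomials $q^m$, one for each proper coloring. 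Hence from the polynomial we can read off exactly the set of exponents $\{\sum_v k^v c(v) : c \text{ a proper coloring of } G\}$, and by extracting base-$k$ digits we recover exactly the \emph{set of proper colorings} of $G$ with colors in $[d-1]$.

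So the theorem reduces to the following combinatorial claim: a graph $G$ on $[d]$ is determined by its set of proper $(d-1)$-colorings. To prove this, I would recover each edge individually. Fix $i \ne j$ in $[d]$. I claim $ij \in E$ if and only if there is \emph{no} proper coloring $c$ of $G$ with $c(i) = c(j)$ and $c$ restricted to the other vertices injective-enough to be proper; more cleanly, $ij \notin E$ if and only if some proper coloring assigns $i$ and $j$ the same color. Indeed, if $ij \notin E$, consider the graph $G$ which has at most $d$ vertices but where $i,j$ are non-adjacent: contract $i$ and $j$ to a single vertex to get a graph on $d-1$ vertices, which is $(d-1)$-colorable (every graph on $m$ vertices is $m$-colorable), and pulling back gives a proper coloring of $G$ with $c(i) = c(j)$. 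Conversely if $ij \in E$ then no proper coloring can have $c(i) = c(j)$. Thus $E = \{ ij : \text{every proper $(d-1$)-coloring $c$ of $G$ has } c(i) \ne c(j)\}$, which is determined by the set of proper colorings, completing the argument.

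I would organize the write-up in two steps: first, the injectivity-of-exponents observation giving that $\chi_G^\lambda(q,d-1)$ determines the proper-coloring set (here one must be a little careful that the hypothesis $k \ge d$ forces all color values $c(v) \le d-1 < k$ to be legitimate base-$k$ digits, and that including the "digit'' $0$ never occurs since colors are at least $1$ — but $0$ never being used is harmless); second, the graph-reconstruction step via contraction. Then $\chi_G^\lambda(q,d-1) = \chi_H^\lambda(q,d-1)$ forces $G$ and $H$ to have the same proper $(d-1)$-coloring sets, hence the same edge sets, hence $G = H$.

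The main obstacle, such as it is, lies in the second step: one needs the fact that any graph on $m$ vertices admits a proper $m$-coloring (trivially, the identity coloring), applied after contracting the non-edge $ij$ to produce a graph on $d-1$ vertices — so that a proper $(d-1)$-coloring separating the two original classes can always be lifted. Everything else is bookkeeping with base-$k$ expansions; the key point to get right is precisely that $n = d-1$ and $k \ge d$ are exactly matched so that colorings and digit strings correspond bijectively.
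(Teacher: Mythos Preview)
Your proposal is correct and follows essentially the same two-step approach as the paper: first use uniqueness of base-$k$ representations to show that $\chi_G^\lambda(q,d-1)$ determines the set of proper $(d-1)$-colorings, then recover the edge set from that set. The only cosmetic difference is in the edge-detection step: the paper writes down the explicit test coloring $f(v)=f(w)=1$ with the remaining $d-2$ vertices getting distinct colors $2,\dots,d-1$, whereas you phrase the same construction via contracting the non-edge $ij$ and invoking the trivial $(d-1)$-colorability of a graph on $d-1$ vertices---unwinding your contraction yields exactly the paper's coloring.
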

\begin{proof}
    Suppose $\chi^\lambda_G(q,d-1) = \chi^\lambda_H(q,d-1)$, that is,
    \[
        \sum_{\substack{\text{proper } G-\text{coloring} \\ c:[d] \to [d-1]}} q^{\sum_{i \in [d]} \lambda_i
c(i)} \, = \sum_{\substack{\text{proper } H-\text{coloring} \\ f:[d] \to [d-1]}} q^{\sum_{i \in [d]} \lambda_i f(i)}.
    \]
By the uniqueness of the base-$k$ representation of a natural number, each proper coloring of $G$ gives a unique
power of $q$ on the left-hand side and, similarly, each proper coloring of $H$ gives unique powers of $q$ on the right-hand side. 
    In order for equality to hold, the powers appearing must be the same and hence every proper coloring of $G$ using at most $d-1$ colors is also a proper coloring of $H$ and vice versa. 
    In other words, the two graphs share the same $(d-1)$-colorings.
    
    In particular, some of the $(d-1)$-colorings encode the edges of a graph since we can use them to test if we
have an edge between any two vertices, as follows.
    Suppose $v,w \in [d]$ are vertices of some graph on $[d]$ and consider the coloring $f: [d] \to [d-1]$
defined by $f(v) = f(w)=1$ and each other vertex being assigned a unique color between $2$ and $d-1$.
    If $v,w$ form an edge, $f$ will not be a proper coloring, otherwise it will be.
    Hence if the power of $q$ corresponding to $f$ appears we do not have the edge $v,w$ and it does not appear we do have an edge.
    Since these $(d-1)$-colorings of $G$ and $H$ must coincide, $G$ and $H$ have the same edge sets, showing $G=H$.
\end{proof}

Theorem~\ref{thm:distinguishedlabeled} shows that it can be useful to consider various linear forms aside from the all-ones vector~$\mathbbm{1}$.


\section{$G$-statistics and the regions of proper colorings}\label{sec:gstats}


\subsection{$G$-statistics}
Let $G = (V, E)$ be a graph with $V = [d]$. 
The following definition was introduced by Chung--Graham \cite{chunggrahamcoverpol}.
Given a permutation $\pi = [\pi(1) \cdots \pi(d)] \in S_d$ written in one-line notation, define the \Def{rank} of $\pi(i)$ with respect to $\pi$, denoted $\rk(\pi(i))$, to be the largest $r \in \mathbb{Z}_{>0}$ for which there
exist positions $i_1 < i_2 < \dots < i_r = i$ such that $\{\pi(i_j), \pi(i_{j+1}) \} \in E$ for $1 \le j < r$. 
This gives a function $\rk:[d] \to [d]$ which we call the \Def{rank function of $\pi$}.
Steingrímsson used a variation of rank to instead respect an ordered set partition of the vertex set \cite{steingrimssoncoloring}. Given an ordered set partition $A := (A_1,
A_2, \ldots, A_m)$ of $V$, define the \Def{rank} of an element $v \in V$ with respect $A$, denoted $\rk[A](v)$, as follows. 
For $v \in V$, let $v_{i_1}, v_{i_2}, \ldots, v_{i_k} = v$ be a longest path in $G$ ending in $v$ such that $v_{i_j} \in A_{i_j}$ for each $j$ and $i_1 < i_2 < \cdots < i_k$, then we define $\rk[A](v) = k$. 
This gives a function $\rk[A]: [d] \to [d]$, which we call the \Def{rank function of $A$}.

We use the same terminology since the rank function of a permutation can be seen as a special case of the rank function of an ordered set partition. 
For a permutation $\pi = [\pi(1) \cdots \pi(d)]$, the ordered set partition consisting of singletons $A:=(\pi(1),\ldots,\pi(d))$, $\rk = \rk[A]$ yields the same rank function; other ordered set partitions can also yield the same rank functions, see Example~\ref{ex:rank}.
The rank with respect to an ordered set partition will be helpful in the proof of
Proposition~\ref{prop:graphicaldecomposition} since for one containment we do not start with a permutation. 

\begin{example}\label{ex:rank}
    Let $G$ be the path graph depicted in Figure~\ref{fig:rankgraphexample}, and let $\pi = [321] \in S_3$. To find $\rk(\pi(3))$, we have positions $1 <3$ so that $\{\pi(1),\pi(3) \} = \{3,1 \}\in E$. 
    However, for the positions $2<3$ we have $\{\pi(2),\pi(3)\} = \{2,1 \} \notin E$. 
    This means $1<3$ is the longest increasing sequence of positions ending at $3$ satisfying our edge condition, showing $\rk(\pi(3))=\rk(1)=2$.
    
    For the same graph, now consider the ordered set partition $A:=(\{3\},\{1,2\})$ of $[3]$. To find $\rk[A](1)$ we need to find a longest path in $G$ using elements in increasing blocks. 
    Here we can pick the path $3-1$, and so $\rk[A](1)=2$.
    In fact, both rank functions above give the same values $\rk(3)=1 =\rk[A](3), \, \rk(2)=2=\rk[A](2), \, \rk(1)=2 = \rk[A](1)$.
\end{example}

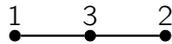
\begin{figure}
    \centering
    \begin{tikzpicture}
    \begin{scope}[xshift=0, yshift=0, scale=1]    
        \vertex[fill](a1) at (0,0) {};
        \vertex[fill](a2) at (1,0) {};
        \vertex[fill](a3) at (2,0) {};

        \node[anchor=south] (b1) at (a1) {$1$};
        \node[anchor=south] (b2) at (a2) {$3$};
        \node[anchor=south] (b2) at (a3) {$2$};
        
        \draw[thick] (a1) --  (a2);
        \draw[thick] (a2) --  (a3);
      
    \end{scope}
\end{tikzpicture}
    \caption{Path graph on three vertices}
    \label{fig:rankgraphexample}
\end{figure}

The rank of a permutation gives rise to the following central $G$-statistic \cite{chunggrahamcoverpol}. 
We say $\pi \in S_d$ has a \Def{$G$-descent} at $i \in [d-1]$ if either
\begin{enumerate}
    \item $\rk(\pi(i)) > \rk(\pi(i+1))$, or 
    \item $\rk(\pi(i)) = \rk(\pi(i+1))$ and $\pi(i) > \pi(i+1)$.
\end{enumerate}
Denote the set of $G$-descents of $\pi$ by $\des_G(\pi) \subseteq [d-1]$. We say $\pi$ has a \Def{$G$-ascent} at $i \in [d-1]$ if either
\begin{enumerate}
    \item $\rk(\pi(i)) < \rk(\pi(i+1))$, or 
    \item $\rk(\pi(i)) = \rk(\pi(i+1))$ and $\pi(i) < \pi(i+1)$.
\end{enumerate}
Denote the set of $G$-ascents of $\pi$ by $\asc_G(\pi) \subseteq [d-1]$. 
Given a permutation $\pi \in S_d$, let the \Def{$G$-major index} be $\maj_G(\pi) := \sum_{i \in \des_G(\pi)} i$. 
Let $\ascnum_G(\pi) := |\asc_G(\pi)|$ and $\desnum_G(\pi):= |\des_G(\pi)|$.
When $E = \varnothing$, these descent statistics stem from the usual descents of a permutation.

\begin{example}
    For the same path graph $G$ as in Example~\ref{ex:rank}, consider again the permutation $\pi = [321] \in S_3$. 
    We computed the ranks $\rk(3)=1, \, \rk(2)=2, \, \rk(1)=2$. 
    Since $\rk(\pi(1)) < \rk(\pi(2))$, we have $1 \in \asc_G(\pi)$. 
    Furthermore, $\rk(\pi(2))=\rk(\pi(3))$ and $\pi(2)=2 > 1 = \pi(3)$, so $2 \in \des_G(\pi)$. 
    Note that the $G$-ascent set in this example is not the same as the usual ascent set on $[321]$ (which is
the empty set).
\end{example}

\subsection{Chromatic generating functions}
We will now describe the polyhedral structure encoding proper colorings of $G$ and then exhibit the connection to $G$-ascents. 

Given an edge $e = ij \in E(G)$, define the hyperplane $H_e := \left\{ x\in\mathbb{R}^d:x_i = x_j\right\}$. 
We can express any proper coloring of $G$ as an integer point $a \in \mathbb{Z}_{ >0 }^d$ such that $a_i
\neq a_j$ whenever $ij \in E$. 
Moreover, we can also see such colorings using at most $n$ colors as lying in the $(n+1)$-dilate of the open
cube $(0,1)^d$ with the graphical hyperplane arrangement $\{H_e\}_{e \in E}$ removed, following the ideas
behind inside-out polytopes~\cite{iop}.
We define
\[
    K_G : = \left\{x \in \mathbb{R}_{> 0}^{d} : \, x_i \neq x_j \text{ for } ij \in E \right\}
\]
and so the integral points of $K_G$ correspond to all proper colorings of $G$.
It will be helpful to record which dilate of the cube $(0,1)^d$ our lattice points are in, as this encodes how many colors are available for a coloring. 
To capture this information, define
\[
    \widehat{K_G} : = \left\{x \in \mathbb{R}^{d+1} : 0< x_1, \ldots, x_d < x_{d+1}, \, x_i \neq x_j \text{ for
} ij \in E \right\},
\]
which we may think of as the homogenization of $K_G$. 
See Figure~\ref{fig:kgexample} for an example.

\begin{figure}
    \centering

\tdplotsetmaincoords{77}{120}

\begin{tikzpicture}
    \begin{scope}[xshift=-75, yshift=0, scale=1]    
        \vertex[fill](a1) at (0,0) {};
        \vertex[fill](a2) at (1,0) {};

        \node[anchor=north] at (a1) {$1$};
        \node[anchor=north] at (a2) {$2$};

        \draw[thick] (a1) -- (a2);
    \end{scope}

    \begin{scope}[xshift=0, yshift=0, scale=0.5]    
            \fill[fill=cyan!30] (0,0) -- (3.75,0) -- (3.75,3.75) -- (0,3.75) -- cycle;
            

            \draw[line width=1.3pt,white] (0,0) -- (4,4);
            \draw[dashed, ->] (0,0) -- (4,4) node[right]{\tiny $x_1=x_2$};

            \draw[line width=1.5pt,white] (0,0) -- (4,0);
            \draw[line width=1.5pt,white] (0,0) -- (0,4);

            \draw[->,thick] (0,0) -- (4,0) node[right]{\small $x_1$};
            \draw[->,thick] (0,0) -- (0,4) node[left]{\small $x_2$};

            \node[circle, draw, inner sep=0pt, minimum size=2pt, fill] (a1) at (1,2) {};
            \node[circle, draw, inner sep=0pt, minimum size=2pt, fill] (a2) at (3,1) {};
            \node[circle, draw, inner sep=0pt, minimum size=2pt, fill] (a3) at (3,2) {};

            \node[circle, draw, inner sep=0pt, minimum size=2pt, fill] (a4) at (2,1) {};
            \node[circle, draw, inner sep=0pt, minimum size=2pt, fill] (a5) at (1,3) {};
            \node[circle, draw, inner sep=0pt, minimum size=2pt, fill] (a6) at (2,3) {};

            \node[circle, draw, inner sep=0pt, minimum size=2pt,fill=white] (a7) at (1,1) {};
            \node[circle, draw, inner sep=0pt, minimum size=2pt,fill=white] (a8) at (2,2) {};
            \node[circle, draw, inner sep=0pt, minimum size=2pt,fill=white] (a9) at (3,3) {};

            \node[circle, draw, inner sep=0pt, minimum size=2pt,fill=white] (b1) at (0,0) {};
            \node[circle, draw, inner sep=0pt, minimum size=2pt,fill=white] (b2) at (1,0) {};
            \node[circle, draw, inner sep=0pt, minimum size=2pt,fill=white] (b3) at (2,0) {};
            \node[circle, draw, inner sep=0pt, minimum size=2pt,fill=white] (b4) at (3,0) {};
            \node[circle, draw, inner sep=0pt, minimum size=2pt,fill=white] (b5) at (0,1) {};
            \node[circle, draw, inner sep=0pt, minimum size=2pt,fill=white] (b6) at (0,2) {};
            \node[circle, draw, inner sep=0pt, minimum size=2pt,fill=white] (b7) at (0,3) {};

            \node[cyan] at (5,2) {\small $K_G$};
    \end{scope}

    \begin{scope}[xshift=150, yshift=0,scale=0.85,tdplot_main_coords,
	facet/.style={fill=orange,fill opacity=0.3}]
            \coordinate (A) at (0,0,1);
            \coordinate (B) at (1,0,1);
            \coordinate (C) at (0,1,1);
            \coordinate (D) at (1,1,1);

            \coordinate (1) at (0,0,2);
            \coordinate (2) at (2,0,2);
            \coordinate (3) at (0,2,2);
            \coordinate (4) at (2,2,2);

            \coordinate (e) at (0,0,3);
            \coordinate (f) at (3,0,3);
            \coordinate (g) at (0,3,3);
            \coordinate (h) at (3,3,3);

            \coordinate (e2) at (0,0,3.2);
            \coordinate (f2) at (3.2,0,3.2);
            \coordinate (g2) at (0,3.2,3.2);
            \coordinate (h2) at (3.2,3.2,3.2);

            \node[right, cyan] at (g2) { \small $\widehat{K_G}$};

            \draw[->,thick] (-0.25,0,0) -- (3.5,0,0) node[left]{\tiny $x_1$}; 
            \draw[->,thick] (0,-0.25,0) -- (0,3.25,0) node[right]{\tiny $x_2$};
            \draw[->,thick] (e) -- (0,0,3.5) node[above]{\tiny $x_3$};

            \fill[facet, cyan] (0,0,0) --(e) -- (f) -- cycle;

            \fill[facet, cyan] (0,0,0) --(e) -- (g) -- cycle;

            \fill[BrickRed,opacity=0.3] (0,0,0) -- (h) -- (-0.25,-0.25,3) -- (-0.25,-0.25,0) -- cycle;
            \draw[BrickRed,opacity=0.25,line width =1pt] (h) -- (-0.25,-0.25,3) -- (-0.25,-0.25,0) -- (0,0,0);

            \draw[thick, color=cyan, dashed] (A) -- (B) -- (D) -- (C) -- cycle;

            \draw[thick, color=cyan,dashed] (1) -- (2) -- (4) -- (3) -- cycle;

            \draw[->,cyan,thick,dashed] (0,0,0) -- (e2) node[left]{}; 
            \draw[->,cyan,thick,dashed] (0,0,0) -- (f2) node[right]{};
            \draw[->,cyan,thick,dashed] (0,0,0) -- (g2) node[above]{};
            \draw[->,cyan,thick,dashed] (0,0,0) -- (h2) node[above]{};

            \node[circle, draw, inner sep=0pt, minimum size=2pt] at (1,1,2) {};
            \node[circle, draw, inner sep=0pt, minimum size=2pt] at (0,0,2) {};
            \node[circle, draw, inner sep=0pt, minimum size=2pt] at (0,0,1) {};

            \node[circle, draw, inner sep=0pt, minimum size=2pt] at (0,1,2) {};
            \node[circle, draw, inner sep=0pt, minimum size=2pt] at (1,0,2) {};

            \fill[facet, cyan] (0,0,0) --(g) -- (h) -- cycle;
            \fill[facet, cyan] (0,0,0) --(h) -- (f) -- cycle;

            \node[circle, draw, inner sep=0pt, minimum size=2pt] at (1,1,3) {};
            \node[circle, draw, inner sep=0pt, minimum size=2pt, fill] at (2,1,3) {};
            \node[circle, draw, inner sep=0pt, minimum size=2pt, fill] at (1,2,3) {};
            \node[circle, draw, inner sep=0pt, minimum size=2pt] at (2,2,3) {};
            
            \node[circle, draw, inner sep=0pt, minimum size=2pt] at (e) {};
            \node[circle, draw, inner sep=0pt, minimum size=2pt] at (f) {};
            \node[circle, draw, inner sep=0pt, minimum size=2pt] at (g) {};
            \node[circle, draw, inner sep=0pt, minimum size=2pt] at (h) {};
            \node[circle, draw, inner sep=0pt, minimum size=2pt] at (1,0,3) {};
            \node[circle, draw, inner sep=0pt, minimum size=2pt] at (2,0,3) {};
            \node[circle, draw, inner sep=0pt, minimum size=2pt] at (3,1,3) {};
            \node[circle, draw, inner sep=0pt, minimum size=2pt] at (3,2,3) {};
            \node[circle, draw, inner sep=0pt, minimum size=2pt] at (2,3,3) {};
            \node[circle, draw, inner sep=0pt, minimum size=2pt] at (3,1,3) {};
            \node[circle, draw, inner sep=0pt, minimum size=2pt] at (0,2,3) {};
            \node[circle, draw, inner sep=0pt, minimum size=2pt] at (0,1,3) {};

            \node[circle, draw, inner sep=0pt, minimum size=2pt] at (B) {};
            \node[circle, draw, inner sep=0pt, minimum size=2pt] at (C) {};
            \node[circle, draw, inner sep=0pt, minimum size=2pt] at (D) {};
            \node[circle, draw, inner sep=0pt, minimum size=2pt] at (2) {};
            \node[circle, draw, inner sep=0pt, minimum size=2pt] at (3) {};
            \node[circle, draw, inner sep=0pt, minimum size=2pt] at (4) {};

            \node[circle, draw, inner sep=0pt, minimum size=2pt] at (2,1,2) {};
            \node[circle, draw, inner sep=0pt, minimum size=2pt] at (1,2,2) {};

            \fill[BrickRed,opacity=0.3] (0,0,0) -- (h) -- (3.25,3.25,3) -- (3.25,3.25,0) -- cycle;
            \draw[BrickRed,opacity=0.25,line width =1pt] (h) -- (3.25,3.25,3) -- (3.25,3.25,0) -- (0,0,0);

            \node[circle, draw, inner sep=0pt, minimum size=2pt] at (0,0,0) {};

            \node[right] at (3.25,3.25,0) {\tiny $x_1=x_2$};
    \end{scope}
\end{tikzpicture}
    \caption{For $G$ the path graph on two vertices, the center figure shows $K_G$, and the right figure shows $\widehat{K_G}$.}\label{fig:kgexample}
\end{figure}
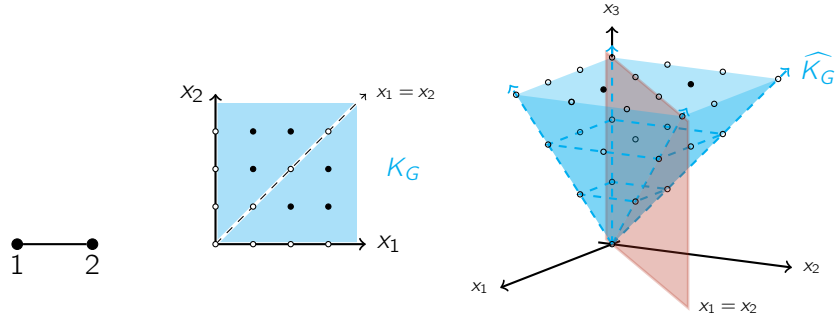

For these polyhedral structures, we now create a disjoint polyhedral decomposition using $G$-ascents. 
We will turn our focus to $\widehat{K_G}$ for the rest of this section, as this will connect to the $q$-chromatic polynomial, but the arguments and proofs are analogous for $K_G$, which gives the $G$-partition generating function defined by Bajo et al.~\cite{qchromatic}. 
For $\pi \in S_d$ define the half-open simplicial cone
\[
    \widehat{\Delta_\pi^G} := \left\{ x \in \mathbb{R}^{d+1} : 
    \begin{array}{l}
        0 < x_{\pi(1)} \leq x_{\pi(2)} \leq \cdots \leq x_{\pi(d)} < x_{d+1} \\ x_{\pi(i)} < x_{\pi(i+1)} \text{ whenever } i \in \asc_G(\pi)  
    \end{array} \right\}.
\]
We denote $\widehat{\Delta_\pi^G}$ by $\widehat{\Delta_\pi}$ when $G$ is clear from the context. 

\begin{proposition}\label{prop:graphicaldecomposition}
    Let $G = ([d],E)$ be a graph. Then
    \[
        \widehat{K_G} = \biguplus_{\pi \in S_d} \widehat{\Delta_{\pi}^G} \, .
    \]
\end{proposition}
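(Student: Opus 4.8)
The plan is to prove the two set‑containments separately, and within each to argue that the cones $\widehat{\Delta_\pi^G}$ are pairwise disjoint, so that the union is genuinely disjoint.

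\medskip

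First I would establish $\biguplus_{\pi} \widehat{\Delta_\pi^G} \subseteq \widehat{K_G}$, which is the easy direction. Fix $\pi \in S_d$ and $x \in \widehat{\Delta_\pi^G}$. By definition $0 < x_{\pi(1)} \leq \cdots \leq x_{\pi(d)} < x_{d+1}$, so the cube/positivity constraints defining $\widehat{K_G}$ hold. It remains to check $x_i \neq x_j$ for every edge $ij \in E$. Write $i = \pi(a)$ and $j = \pi(b)$ with, say, $a < b$; since $x_{\pi(a)} \leq x_{\pi(a+1)} \leq \cdots \leq x_{\pi(b)}$, if these were all equal then in particular there would be no strict inequality at position $a$, forcing $a \notin \asc_G(\pi)$. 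But $\{\pi(a),\pi(b)\} \in E$ together with $a<b$ means $\rk(\pi(b)) \geq \rk(\pi(a)) + 1 > \rk(\pi(a))$ (appending $\pi(b)$ to a longest valid path ending at $\pi(a)$, or one ending earlier), so by the definition of $G$-ascent we get $a \in \asc_G(\pi)$ — wait, $G$-ascents are indexed by consecutive positions, so the correct argument is: look at the smallest position $c$ with $a \le c < b$ where $x_{\pi(c)} = x_{\pi(c+1)}$; equality of all of $x_{\pi(a)},\ldots,x_{\pi(b)}$ would make every such $c \in [a,b-1]$ a non‑ascent, and one checks (tracing the rank function along the block of equal coordinates) that this contradicts $\{\pi(a),\pi(b)\}\in E$. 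The cleanest formulation: within a maximal block of positions on which $x$ is constant, the ranks are strictly increasing and the values $\pi(\cdot)$ are increasing (each consecutive pair is a $G$-ascent that is forced to be an equality), and in particular no two vertices in such a block can be adjacent, because adjacency would bump the rank and create the required strict inequality. So $x \in \widehat{K_G}$.

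\medskip

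Second, the reverse containment $\widehat{K_G} \subseteq \bigcup_\pi \widehat{\Delta_\pi^G}$. Given $x \in \widehat{K_G}$, I would produce the permutation $\pi$ by sorting: choose $\pi$ so that $x_{\pi(1)} \leq x_{\pi(2)} \leq \cdots \leq x_{\pi(d)}$, breaking ties in a specific way, namely so that within each block of equal coordinates the vertices appear in \emph{increasing order of $\rk[A]$ first, then increasing order of label}, where $A$ is the ordered set partition obtained by grouping the coordinates of $x$ into level sets (this is exactly why the proof needs rank relative to an ordered set partition rather than to a permutation, as the text foreshadows). With this choice, the chain $0 < x_{\pi(1)} \le \cdots \le x_{\pi(d)} < x_{d+1}$ holds by construction, and I must verify the strict inequality $x_{\pi(i)} < x_{\pi(i+1)}$ at every $i \in \asc_G(\pi)$. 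If $x_{\pi(i)} < x_{\pi(i+1)}$ this is automatic; the content is that we cannot have $x_{\pi(i)} = x_{\pi(i+1)}$ with $i$ a $G$-ascent. When $x_{\pi(i)} = x_{\pi(i+1)}$, the two vertices lie in a common block of $A$, so $\rk(\pi(i)) = \rk[A](\pi(i))$ and $\rk(\pi(i+1)) = \rk[A](\pi(i+1))$; since $x \in K_G$, $\pi(i)$ and $\pi(i+1)$ are non‑adjacent, and one shows that this makes the rank functions of $\pi$ and of $A$ agree on the block and that our tie‑breaking rule (ranks increasing, then labels increasing) is precisely the condition for $i$ to be a $G$-\emph{descent}, not a $G$-ascent. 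This is the step I expect to be the main obstacle: carefully checking that the ordered‑set‑partition rank $\rk[A]$ coincides with the permutation rank $\rk$ on the relevant indices, given non‑adjacency within blocks, so that the $G$-ascent/descent dichotomy computed from $\pi$ matches the one built into the tie‑breaking.

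\medskip

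Finally, disjointness. Suppose $x \in \widehat{\Delta_\pi^G} \cap \widehat{\Delta_\sigma^G}$ with $\pi \neq \sigma$. Both cones force $x_{\pi(1)} \le \cdots \le x_{\pi(d)}$ and $x_{\sigma(1)} \le \cdots \le x_{\sigma(d)}$, so $\pi$ and $\sigma$ differ only by reordering within blocks of equal coordinates; pick the first position $i$ where they disagree, lying inside such a block. Then there are indices at which $\pi$ and $\sigma$ transpose two vertices $u = \pi(i), w = \pi(i+1)$ (after possibly moving to an adjacent pair within the block — since any reordering within a block is generated by adjacent transpositions, I can assume $u,w$ are swapped at a single consecutive position). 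For $\pi$, the pair $(i, i+1)$ is either a $G$-ascent or a $G$-descent of $\pi$; for $\sigma$ with $u,w$ swapped it is the opposite type. In the ascent case one of the two cones demands the strict inequality $x_u < x_w$ or $x_w < x_u$, contradicting $x_u = x_w$ (equality within the block). The only remaining possibility is that $(i,i+1)$ is a $G$-descent for both orderings, but the $G$-descent condition in the equal‑rank case pins down the relative order of $u$ and $w$ by their labels, so both $\pi$ and $\sigma$ would place them in the same order — contradicting that they disagree at $i$. Hence $\pi = \sigma$, completing the proof.
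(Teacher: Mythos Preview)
Your overall architecture matches the paper's, but you have the orientation of the tie-breaking rule exactly backwards, and this error propagates through all three parts of your argument.

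Recall the definition: $x_{\pi(i)} < x_{\pi(i+1)}$ is forced precisely when $i \in \asc_G(\pi)$. Hence if $x_{\pi(i)} = x_{\pi(i+1)}$ then $i$ must be a $G$-\emph{descent}, meaning $\rk(\pi(i)) > \rk(\pi(i+1))$, or equal ranks with $\pi(i) > \pi(i+1)$. So within a block of equal coordinates, ranks are weakly \emph{decreasing} and ties are broken by \emph{decreasing} label. Your ``cleanest formulation'' in the first containment (``ranks are strictly increasing and the values $\pi(\cdot)$ are increasing \ldots\ each consecutive pair is a $G$-ascent that is forced to be an equality'') says the opposite; a $G$-ascent forces strict inequality, not equality.

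The same reversal breaks your construction in the second containment. You sort within each level set of $x$ by ``increasing $\rk[A]$, then increasing label'' and then claim this makes every consecutive pair in the block a $G$-descent. By the definition of $G$-ascent, increasing rank (or equal rank with increasing label) is exactly the $G$-\emph{ascent} condition, so your $\pi$ would demand strict inequalities throughout the block and $x \notin \widehat{\Delta_\pi}$. The paper orders each block \emph{decreasingly} by $\rk[A]$, then decreasingly by label, and then proves $\rk[A] = \rk[\pi]$ (using that each block is an independent set) so that the $G$-descent condition really holds with respect to $\pi$'s own rank function. You flag this rank-agreement step as the main obstacle, which is right, but you never get there because the ordering is inverted.

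Your disjointness sketch also leans on ``for $\sigma$ with $u,w$ swapped it is the opposite type,'' which is not safe: swapping two entries of a permutation can change the ranks of later entries, so the $G$-ascent/descent status at that position need not simply flip. The paper instead argues that any $\tau$ with $a \in \widehat{\Delta_\tau}$ must keep the blocks of $A$ in order and have only $G$-descents inside each block; together with $\rk[\tau] = \rk[A]$ this forces the decreasing-rank-then-label ordering, hence $\tau = \pi$.
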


\begin{proof}
We remark that the general idea of the following argument is essentially given by Steingrímsson \cite[Theorem~6]{steingrimssoncoloring}.
We first show $\widehat{K_G} \supseteq \biguplus_{\pi \in S_d} \widehat{\Delta_{\pi}^G}$.
Let $\pi \in S_d$; it suffices prove $\widehat{\Delta_{\pi}} \cap H_e = \emptyset$ for all $e \in E$. 
Let $e=\{v_1,v_2\} \in E$, so there exist $i_1,i_2 \in [d]$ such that $\pi(i_1) = v_1$ and $\pi(i_2) = v_2$; without loss of generality suppose $i_1 < i_2$. 
This means $\rk(v_2) \geq \rk(v_1)+1$ since when computing the rank for $\pi(i_2)$, a longest path in the definition of rank of $\pi(i_2)$ must be at least as long as the longest path defining the rank of $\pi(i_1)$ with the edge $v_1 v_2 \in E$. 
Towards a contradiction, if all positions $j$ where $i_1 \leq j < i_2$ are $G$-descents, then ranks must be
weakly decreasing, $\rk(v_1) \geq \rk(\pi(i_1 +1)) \geq \cdots \geq \rk(v_2)$, which gives a contradiction. 
So there must exist a $G$-ascent at some position $j$ where $i_1 \leq j < i_2$, yielding  
the strict inequality $x_{v_1} < x_{v_2}$ in the definition of $\widehat{\Delta_{\pi}}$. As the edge was arbitrary, $\widehat{\Delta_{\pi}} \cap H_e = \emptyset$ for all $e \in E$.

We now show that  $\widehat{K_G} \subseteq \biguplus_{\pi \in S_d} \widehat{\Delta_{\pi}^G}$, that is, a given $a := (a_1,\ldots,a_{d+1}) \in \widehat{K_G}$ satisfies $a \in \widehat{\Delta_{\pi}}$ for some unique $\pi \in S_d$. 
Let $b_1 < b_2 < \cdots < b_k<b_{k+1}$ be all the distinct values among the entries of $a$.
By definition of $\widehat{K_G}$, we know $a_{d+1}$ is the only term achieving the highest value $b_{k+1}$.
For each $j \in [k]$ define $A_{j} :=\{i  \in [d] : a_i = b_j\}$. 
This gives an ordered set partition $(A_j,\ldots,A_{k})$ of the vertex set $[d]$.
To form our permutation, we first place the blocks in order $A_1,\ldots,A_k$. 
Then, within each block we order the vertices decreasingly by their rank with respect to the ordered set partition $(A_1,\ldots,A_k)$.
Lastly, we order consecutive vertices that share the same rank decreasingly by their label. 
Since each block of $A$ are vertices which share the same value from the point $a$, each block is an independent set in $G$.
Thus, ordering decreasingly by rank in each block is well-defined because the rank function depends only on the previous blocks, and the lack of edges within each block implies that reordering vertices within a block does not change the rank function.
See Example~\ref{ex:uniquepermcone} for an example of this process. 
Ordering the vertices in this manner gives a well-defined permutation $\pi \in S_d$.

We next claim $\rk[A] = \rk[\pi]$. 
Let $\pi(i) \in [d]$ and set $r := \rk(\pi(i))$ and $p :=\rk[A](\pi(i))$. 
By definition of $r$, there exist positions $i_1 < i_2 < \dots < i_r = i$ such that $\{\pi(i_j), \pi(i_{j+1}) \} \in E$ for $1 \le j < r$.
So the path formed by $\pi(i_1) \pi(i_2) \dots \pi(i)$ must use vertices in different blocks of $A$ of increasing index.
Since $p$ is defined from the longest such path satisfying this condition, we have $r \leq p$. 
Similarly, by the definition of $p$, there exists a longest path $v_{j_1}, v_{j_2}, \ldots, v_{j_p} = \pi(i)$ ending in $\pi(i)$ such that $v_{j_k} \in A_{j_k}$ for each $k$ and $j_1 < j_2 < \cdots < j_p$.
These vertices must appear in increasing positions of $\pi$ by construction since they are in blocks of $A$ of increasing index. 
Since $r$ is defined from the longest such path satisfying this condition, we have $p \leq r$.
This shows $r = p$.
Since this was for an arbitrary element $\pi(i)$, the rank functions are equal.

Observe next that by construction of $\pi$ and since the rank functions coincide, $G$-ascents are only possible between blocks in our ordering.
Note that it might be the case that only some of these positions are $G$-ascents, but it is certain that no $G$-ascents occur within a block $A_j$ of elements in the ordering.
Since $a$ must satisfy equality between elements in the order within a fixed block $A_j$, and $a$ is able to
satisfy strict inequalities between blocks in our ordering, where $a_{d+1}$ is strictly greater than any other entry, we have $a \in \widehat{\Delta_\pi}$.

Finally, we show that $a$ cannot be an element of any $\widehat{\Delta_\tau}$ for $\tau\neq \pi$.
Specifically, since the elements of $a$ within a block $A_j$ are all equal, those elements must occur in $G$-descending order. 
Since the values of $a$ are distinct among blocks, for any $\tau$ with $a\in \widehat{\Delta_\tau}$, it must be that the blocks of elements $A_j$ appear in strictly increasing order with respect to $\tau$. 
Thus, we are forced to have the blocks in order, as well as to avoid any $G$-ascents within a block, which leads that any such $\tau$ must equal $\pi$, and our proof is complete.
\end{proof}

\begin{example}\label{ex:uniquepermcone}
    We will give an example of how a point in $\widehat{K_G}$ lies in some $\widehat{\Delta_\pi^G}$ for some
$\pi \in S_d$, as suggested by Proposition~\ref{prop:graphicaldecomposition}.
    Let $G$ be the path graph from Example~\ref{ex:rank}. We have $a= (a_1,a_2,a_3,a_4) := (1.1,1.1,2,3)  \in \widehat{K_G}$. 
    To construct a permutation $\pi$ such that $a \in \widehat{\Delta_\pi}$, we have distinct values $a_1=1.1 <2=a_3<3=a_4$.
    This gives an ordered set partition $A=(\{1,2 \},\{3\})$ of $[3]$.
    To construct our permutation, we place our blocks in order of the ordered set partition and then order within each block by rank with respect to $A$ or labels if ranks are equal.
    For instance, in the first block note that $1$ and $2$ both have rank $1$ with respect to $A$, and since $1<2$ then we have $21$ first appearing. 
    The block $\{3\}$ comes after and so we have the permtuation $[213] \in S_3$.
    We have $\asc_G([213]) = \{2\}$, and so $a \in \widehat{\Delta_{[213]}} = \{(x_1,x_2,x_3,x_4) \in \mathbb{R}^4 : 0<x_2 \leq x_1 < x_3 < x_4\}$. 

    We give another example of this process. Let $H$ be the bowtie graph given in Figure~\ref{fig:gsequence}.
    Now consider $b=(b_1,b_2,b_3,b_4,b_5,b_6) := (2,3,2,1,4,5) \in \widehat{K_H}$.
    To construct a permutation $\tau$ such that $b \in \widehat{\Delta_\tau}$ we first create the ordered set partition of $[5]$ from our distinct values.
    This ordered set partition is $B=(\{4\},\{1,3\},\{2\},\{5\})$. 
    To construct $\tau$ we place blocks in order and in order to decide the order within block $\{1,3\}$ we must decrease by rank with respect to this ordered set partition. We have $\rk[B](1) = 2 > 1=\rk[B](3)$ since $14 \in E(H)$. 
    This means $\tau = [41325]$.
    Since $\asc_H(\tau) = \{1,3,4\}$, then $b \in \widehat{\Delta_{[41325]}} = \{(x_1,x_2,x_3,x_4) \in \mathbb{R}^4 : 0<x_4 < x_1 \leq x_3 < x_2 < x_5 < x_6\}$.
\end{example}

We now study the main multivariate generating function behind proper colorings. 
The \Def{integer point transform} of a set $S \subseteq \mathbb{R}^n$ is 
\[
    \sigma_S(z_1,\ldots,z_{n}) := \sum_{m \in S \cap \mathbb{Z}^n} z^m
\]
where $z^m = z_1^{m_1}z_2^{m_2} \cdots z_n^{m_n}$ for $m=(m_1,\ldots,m_n)$.
We can compute $\sigma_{\widehat{K_G}}$ from $\sigma_{\widehat{\Delta_\pi}}$ via our decomposition in
Proposition~\ref{prop:graphicaldecomposition}, and each $\sigma_{\widehat{\Delta_\pi}}$ can be computed by
standard methods (see, e.g.,~\cite{crt}).

\begin{theorem}\label{thm:multivariablechromaticgenerating}
    Given a graph $G=([d],E)$, 
    \[
        \sigma_{\widehat{K_G}}(z_1,\ldots,z_d,z_{d+1}) = \sum_{\pi \in S_d} \frac{ z_{d+1}^2 z_1 \cdots z_d \prod_{j \in \asc_G(\pi)} z_{\pi(j+1)} \cdots z_{\pi(d)} z_{d+1} }{(1-z_{d+1}) \prod_{i=1}^d (1-z_{\pi(i)} \cdots z_{\pi(d)} z_{d+1})}.
    \]
\end{theorem}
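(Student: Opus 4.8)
The plan is to exploit the disjoint polyhedral decomposition $\widehat{K_G} = \biguplus_{\pi \in S_d} \widehat{\Delta_\pi^G}$ furnished by Proposition~\ref{prop:graphicaldecomposition}. Since the integer point transform is additive over disjoint unions of regions, this immediately gives
\[
    \sigma_{\widehat{K_G}}(z_1, \ldots, z_{d+1}) = \sum_{\pi \in S_d} \sigma_{\widehat{\Delta_\pi^G}}(z_1, \ldots, z_{d+1}),
\]
a legitimate identity of rational functions: each $\widehat{\Delta_\pi^G}$ is a pointed cone (its closure lies in the nonnegative orthant) and there are only $d!$ of them, so all the series converge on a common neighborhood of the origin. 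It therefore suffices to compute $\sigma_{\widehat{\Delta_\pi^G}}$ for a single fixed $\pi$.

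For that, I would reparametrize the lattice points of $\widehat{\Delta_\pi^G}$ by successive differences along the chain $0 < x_{\pi(1)} \le x_{\pi(2)} \le \cdots \le x_{\pi(d)} < x_{d+1}$: set $y_0 := x_{\pi(1)}$, $y_i := x_{\pi(i+1)} - x_{\pi(i)}$ for $1 \le i \le d-1$, and $y_d := x_{d+1} - x_{\pi(d)}$. The inverse, $x_{\pi(k)} = \sum_{i=0}^{k-1} y_i$ for $1 \le k \le d$ and $x_{d+1} = \sum_{i=0}^{d} y_i$, is given by an integer lower-triangular matrix with unit diagonal, so this is a bijection $\mathbb{Z}^{d+1} \to \mathbb{Z}^{d+1}$. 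Under it the defining conditions of $\widehat{\Delta_\pi^G}$ become exactly $y_0 \ge 1$, $y_d \ge 1$ (from the strict bounds $0 < x_{\pi(1)}$ and $x_{\pi(d)} < x_{d+1}$), $y_i \ge 1$ for $i \in \asc_G(\pi)$, and $y_i \ge 0$ for $i \in [d-1] \setminus \asc_G(\pi)$. Back-substituting into $z^x$ and collecting the variable $y_i$, one finds $z^x = \prod_{i=0}^{d} w_i^{y_i}$, where $w_i := z_{\pi(i+1)} z_{\pi(i+2)} \cdots z_{\pi(d)}\, z_{d+1}$ for $0 \le i \le d-1$ and $w_d := z_{d+1}$, since the increment $y_i$ feeds into $x_{\pi(k)}$ precisely for $k > i$ and into $x_{d+1}$ for every $i$.

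Consequently $\sigma_{\widehat{\Delta_\pi^G}}$ factors as a product of one-dimensional geometric series, one per index $i$, with starting value $1$ or $w_i$ according to whether the constraint is $y_i \ge 0$ or $y_i \ge 1$:
\[
    \sigma_{\widehat{\Delta_\pi^G}}(z) = \frac{w_0}{1-w_0} \cdot \frac{w_d}{1-w_d} \cdot \prod_{i \in \asc_G(\pi)} \frac{w_i}{1-w_i} \cdot \prod_{i \in [d-1] \setminus \asc_G(\pi)} \frac{1}{1-w_i} = \frac{w_0\, w_d \prod_{i \in \asc_G(\pi)} w_i}{\prod_{i=0}^{d}(1-w_i)}.
\]
Translating back is then mechanical: $w_0 w_d = z_1 \cdots z_d\, z_{d+1}^2$, the ascent factor is $\prod_{j \in \asc_G(\pi)} z_{\pi(j+1)} \cdots z_{\pi(d)}\, z_{d+1}$, the factor $1 - w_d$ equals $1 - z_{d+1}$, and reindexing $i \mapsto i-1$ turns $\prod_{i=0}^{d-1}(1-w_i)$ into $\prod_{i=1}^{d}\bigl(1 - z_{\pi(i)} \cdots z_{\pi(d)}\, z_{d+1}\bigr)$. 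Summing over $\pi \in S_d$ yields the stated formula. I expect no real obstacle beyond the bookkeeping in the middle step: checking that the successive-difference substitution is a lattice bijection, that it sends each strict/weak inequality of $\widehat{\Delta_\pi^G}$ to the correct $y_i$-constraint, and that the right $z$-variables accumulate onto each $w_i$; these are the only places an off-by-one in the indexing could creep in.
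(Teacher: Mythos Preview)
Your proof is correct and follows essentially the same approach as the paper: both use the decomposition of Proposition~\ref{prop:graphicaldecomposition} and then compute each $\sigma_{\widehat{\Delta_\pi^G}}$ as a rational function. The only cosmetic difference is that the paper phrases the cone computation via ray generators $v_i^\pi$ and the fundamental parallelepiped, whereas your successive-difference substitution $y_i$ is exactly the expression of a point in terms of those same generators (your $w_i$ equals the paper's $z^{v_i^\pi}$), so the two arguments are identical in content.
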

\begin{proof}
    Given $\pi \in S_d$, let $v^\pi_i$ be the ray of $\widehat{\Delta_\pi}$ given by the vector $v^\pi_i=x\in \mathbb{R}^{d+1}$ with entries 
    \[
    x_{\pi(i+1)} =\dots = x_{\pi(d)} = x_{d+1} = 1
    \]
    and remaining entries equal to $0$, where $v^\pi_d$ sets only $x_{d+1}$ to $1$. 
    So, 
        \[
            \widehat{\Delta_\pi} = \mathbb{R}_{>0} v^\pi_0 + \mathbb{R}_{>0} v^\pi_d + \sum_{j \in
            \asc_G(\pi)}\mathbb{R}_{>0} v^\pi_j + \sum_{j \in \des_G(\pi)} \mathbb{R}_{\geq 0} v^\pi_j \, .
        \]
    This half-open simplicial cone comes with the fundamental parallelepiped 
    \[
        \square_{\widehat{\Delta_\pi}} = (0,1] v^\pi_0 + (0,1]v^\pi_d + \sum_{j \in \asc_G(\pi)} (0,1]
        v^\pi_j + \sum_{j \in \des_G(\pi)} [0,1) v^\pi_j  \, .
    \]
    By Proposition~\ref{prop:graphicaldecomposition},
    \[
        \sigma_{\widehat{K_G}}(z) = \sum_{\pi \in S_d} \sigma_{\widehat{\Delta_\pi}}(z) = \sum_{\pi \in S_d}
\frac{\sigma_{\square_{\widehat{\Delta_\pi}}}(z)}{\prod_{i=0}^d (1-z^{v^\pi_i}) } = \sum_{\pi \in S_d}
\frac{z^{v^\pi_0} z^{v^\pi_d} \prod_{j \in \asc_G(\pi)} z^{v^\pi_j}} {\prod_{i=0}^d (1-z^{v^\pi_i}) } \, .
    \]
    Since $v^\pi_0= (1,\ldots,1)$ and $v^\pi_d = (0,\ldots,0,1)$ in $\mathbb{R}^{d+1}$ for any $\pi \in S_d$,
    \[
        \sigma_{\widehat{K_G}}(z) = \sum_{\pi \in S_d} \frac{(z_1 \cdots z_{d+1} ) (z_{d+1}) \prod_{j \in \asc_G(\pi)} z_{\pi(j+1)} \cdots z_{\pi(d)} z_{d+1}} {(1-z_{d+1}) \prod_{i=1}^d 1-z_{\pi(i)} \cdots z_{\pi(d)} z_{d+1}}
    \]
    as desired.
\end{proof}

For any linear form $\lambda \in \mathbb{Z}^d$, we now specialize this integer-point transform to find the generating function for the $q$-chromatic polynomial.

\begin{theorem}\label{thm:genfuncweighted}
    Given a graph $G = ([d],E)$ and a linear form $\lambda=(\lambda_1,\ldots,\lambda_d) \in \mathbb{Z}^d$, let
 $\Lambda := \sum_{i \in [d]} \lambda_i$. Then
\begin{align*}
        \sum_{n \geq 0} \chi^\lambda_G(q,n) z^n
        &= z^{-1} \, \sigma_{\widehat{K_G}}(q^{\lambda_1},\ldots,q^{\lambda_d},z) \\
        &= \sum_{\pi \in S_d} \frac{q^{\Lambda + \sum_{j \in \asc_G(\pi)} \lambda_{\pi(j+1)} + \cdots + \lambda_{\pi(d)}} z^{\ascnum_G(\pi)+1} }{(1-z)(1-q^{\lambda_{\pi(d)}}z) (1-q^{\lambda_{\pi(d)} + \lambda_{\pi(d-1)} } z) \cdots (1-q^\Lambda z)}  \, .
\end{align*} 

\end{theorem}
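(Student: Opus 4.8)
The plan is to obtain the first equality from a direct lattice-point count in $\widehat{K_G}$, and the second by substituting the specialization $z_i=q^{\lambda_i}$ ($i\in[d]$), $z_{d+1}=z$ into Theorem~\ref{thm:multivariablechromaticgenerating}. For the first equality, note that a lattice point $(m_1,\ldots,m_d,m_{d+1})\in\widehat{K_G}\cap\mathbb{Z}^{d+1}$ is precisely a tuple satisfying $1\le m_i\le m_{d+1}-1$ for all $i\in[d]$ together with $m_i\ne m_j$ whenever $ij\in E$; setting $n:=m_{d+1}-1$, the truncation $(m_1,\ldots,m_d)$ then ranges exactly over the proper colorings $c:[d]\to[n]$ of $G$. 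Grouping the sum defining $\sigma_{\widehat{K_G}}$ according to the value of the homogenizing coordinate $m_{d+1}=n+1$ therefore gives
\[
    \sigma_{\widehat{K_G}}(z_1,\ldots,z_d,z_{d+1}) \;=\; \sum_{n\ge 0} z_{d+1}^{\,n+1}\sum_{\substack{\text{proper}\\ c:[d]\to[n]}} z_1^{c(1)}\cdots z_d^{c(d)},
\]
an identity of formal power series in $z_{d+1}$ (for each fixed $n$ the inner sum is a finite Laurent polynomial in $z_1,\ldots,z_d$, vanishing when $0=n<d$). Substituting $z_i=q^{\lambda_i}$ and $z_{d+1}=z$, the inner sum becomes $\sum_{\text{proper }c}q^{\sum_{i}\lambda_i c(i)}=\chi_G^\lambda(q,n)$, so the right-hand side equals $z\sum_{n\ge 0}\chi_G^\lambda(q,n)z^n$; dividing by $z$ yields the first asserted equality. (Consistently, $\chi_G^\lambda(q,0)=0$ for $d\ge 1$, matching that $\widehat{K_G}$ forces $m_{d+1}\ge 2$.)

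For the second equality I would simply carry the closed form of $\sigma_{\widehat{K_G}}$ from Theorem~\ref{thm:multivariablechromaticgenerating} through the same substitution, term by term in the sum over $\pi\in S_d$. Under $z_i\mapsto q^{\lambda_i}$, $z_{d+1}\mapsto z$: the prefactor $z_{d+1}^2z_1\cdots z_d$ becomes $q^{\Lambda}z^2$; for each $j\in\asc_G(\pi)$ the block $z_{\pi(j+1)}\cdots z_{\pi(d)}z_{d+1}$ becomes $q^{\lambda_{\pi(j+1)}+\cdots+\lambda_{\pi(d)}}z$, so the numerator product over $\asc_G(\pi)$ contributes $q^{\sum_{j\in\asc_G(\pi)}(\lambda_{\pi(j+1)}+\cdots+\lambda_{\pi(d)})}z^{\ascnum_G(\pi)}$; and in the denominator $1-z_{\pi(i)}\cdots z_{\pi(d)}z_{d+1}$ becomes $1-q^{\lambda_{\pi(i)}+\cdots+\lambda_{\pi(d)}}z$, which for $i=d,d-1,\ldots,1$ runs through $1-q^{\lambda_{\pi(d)}}z$, $1-q^{\lambda_{\pi(d)}+\lambda_{\pi(d-1)}}z,\ \ldots,\ 1-q^{\Lambda}z$. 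Collecting these and then multiplying by $z^{-1}$ (which drops the power of $z$ in the numerator from $\ascnum_G(\pi)+2$ to $\ascnum_G(\pi)+1$) produces exactly the stated expression.

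The argument is essentially bookkeeping, so no genuine obstacle is expected; the one point deserving care is the first step, namely verifying that the integer-point transform of $\widehat{K_G}$ is the ``master'' generating function $\sum_{n\ge 0} z_{d+1}^{\,n+1}(\cdots)$ — i.e.\ that the coordinate $x_{d+1}$ correctly records one more than the number of available colors — and that the resulting identity is valid as formal power series in $z$ (which it is, since each coefficient of $z^{n}$ on the right is a finite sum). Everything else is a mechanical substitution into Theorem~\ref{thm:multivariablechromaticgenerating}.
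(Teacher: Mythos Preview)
Your proposal is correct and follows essentially the same approach as the paper: interpret the integer-point transform of $\widehat{K_G}$ as the master generating function by slicing along the homogenizing coordinate $m_{d+1}=n+1$, then specialize $z_i\mapsto q^{\lambda_i}$, $z_{d+1}\mapsto z$ in Theorem~\ref{thm:multivariablechromaticgenerating} and simplify. The only cosmetic difference is that you spell out the bookkeeping in slightly more detail than the paper does.
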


\begin{proof}
    We set $z_i = q^{\lambda_i}$ for $i \in [d]$ and $z_{d+1} = z$ and consider what this does for the integer point transform by definition.
    Recall that each lattice point in $K_G$ can be seen as a lattice point in the $(n+1)$-dilate of the cube
$(0,1)^d$, removing points on $H_e$ for all $e \in E$, and any such a point corresponds to a proper $n$-coloring. With this perspective,
\begin{align*}
        \sigma_{\widehat{K_G}} \left(q^{\lambda_1},\ldots,q^{\lambda_d},z \right)
        &= \sum_{m \in \widehat{K_G} \cap \mathbb{Z}^{d+1}} q^{\lambda_1 m_1 + \cdots +\lambda_d m_d} z^{m_{d+1}} 
         =\sum_{n \geq 0} \sum_{\substack{\text{proper } \\n-\text{colorings } c}} q^{ \lambda_1 c(1)+ \cdots +
\lambda_d c(d)} z^{n+1} \\
        &= z \cdot \sum_{n \geq 0}\chi_G^\lambda(q,n) z^n.
\end{align*}    
    Next, we specialize the expression given by Theorem~\ref{thm:multivariablechromaticgenerating}:
\begin{align*}
        \sigma_{\widehat{K_G}}(q^{\lambda_1},\ldots,q^{\lambda_d},z)
        &= \sum_{\pi \in S_d} \frac{ z^2 q^{\lambda_1} \cdots q^{\lambda_d} \prod_{j \in \asc_G(\pi)}
q^{\lambda_{\pi(j+1)}} \cdots q^{\lambda_{\pi(d)}} z }{(1-z) \prod_{i=1}^d (1-q^{\lambda_{\pi(i)}} \cdots
q^{\lambda_{\pi(d)}} z)} \\
         &= z \sum_{\pi \in S_d} \frac{ q^{ \Lambda + \sum_{j \in \asc_G(\pi)} \lambda_{\pi(j+1)} + \cdots +
\lambda_{\pi(d)}} z^{\ascnum_G(\pi)+1} }{(1-z) \prod_{i=1}^d (1-q^{\lambda_{\pi(i)}} \cdots q^{\lambda_{\pi(d)}}
z)} \, .
\end{align*} 
    Comparing the two expressions for $\sigma_{\widehat{K_G}}(q^{\lambda_1},\ldots,q^{\lambda_d},z)$ yields
    \[
        \sum_{n \geq 0}\chi_G^\lambda(q,n) z^n =\sum_{\pi \in S_d} \frac{ q^{ \Lambda + \sum_{j \in \asc_G(\pi)}
\lambda_{\pi(j+1)} + \cdots + \lambda_{\pi(d)}} z^{\ascnum_G(\pi)+1} }{(1-z) \prod_{i=1}^d
(1-q^{\lambda_{\pi(i)}} \cdots q^{\lambda_{\pi(d)}} z)} \, . \qedhere 
    \]
\end{proof}

\begin{remark}
    Further setting $q=1$ gives a generating-function formula for the chromatic polynomial due to Chung--Graham~\cite{chunggrahamcoverpol};
see also Corollary~\ref{cor:chunggraham} below.
\end{remark}

\subsection{Alternative characterizations}
We will now reformulate some expressions coming from the generating function of the $q$-chromatic polynomial
using $G$-sequences, which were introduced by Steingrímsson~\cite{steingrimssoncoloring}. They give an alternate
perspective to our chromatic generating functions, as well as insight for future results.

Let $G=([d],E)$ be a graph and $\pi = [\pi(1) \cdots \pi(d)] \in S_d$. 
The \Def{$G$-sequence} of $\pi$ is an ordered set partition of $[d]$ defined as follows:
\begin{itemize}
    \item If $\asc_G(\pi) = \emptyset$, then our ordered set partition is the singular block $A_1 := [d]$.
    \item If $\asc_G(\pi) =\{i_1 < \cdots < i_{\ascnum_G(\pi)} \}$, then our ordered set partition is $(A_1, \ldots, A_{\ascnum_G(\pi)+1})$
    where
\begin{align*}
        A_1 &:= \{\pi(1), \pi(2),\ldots,\pi(i_1)\} \\
        A_m &:= \{\pi(i_{m-1} +1), \ldots, \pi(i_{m}) \} \ \text{ for } \ 2 \le m \le \ascnum_G(\pi) \\
        A_{\ascnum_G(\pi)+1} &:= \{\pi(i_{\ascnum_G(\pi)} +1),\ldots, \pi(d)\} \, .
\end{align*}
\end{itemize}

\begin{figure}
    \centering
    \begin{tikzpicture}
    \begin{scope}[xshift=0, yshift=0, scale=1]    
        \vertex[fill](a1) at (0,0) {};
        \vertex[fill](a2) at (1,0.5) {};
        \vertex[fill](a3) at (1,-0.5) {};
        \vertex[fill](a4) at (-1,0.5) {};
        \vertex[fill](a5) at (-1,-0.5) {};

        \node[anchor=south] (b1) at (a1) {$2$};
        \node[anchor=south] (b2) at (a2) {$1$};
        \node[anchor=north] (b3) at (a3) {$4$};
        \node[anchor=south] (b4) at (a4) {$5$};
        \node[anchor=north] (b5) at (a5) {$3$};

        \draw[thick] (a1) --  (a2) -- (a3) -- (a1) -- cycle;
        \draw[thick] (a1) --  (a4) -- (a5) -- (a1) -- cycle;
      
    \end{scope}

    \begin{scope}[xshift=100, yshift=0, scale=1]    
        \vertex[fill,BrickRed](a1) at (0,0) {};
        \vertex[fill,cyan](a2) at (1,0.5) {};
        \vertex[fill,violet](a3) at (1,-0.5) {};
        \vertex[fill,orange](a4) at (-1,0.5) {};
        \vertex[fill,cyan](a5) at (-1,-0.5) {};

        \node[anchor=south,BrickRed] (b1) at (a1) {$2$};
        \node[anchor=south,cyan] (b2) at (a2) {$1$};
        \node[anchor=north,violet] (b3) at (a3) {$4$};
        \node[anchor=south,orange] (b4) at (a4) {$3$};
        \node[anchor=north,cyan] (b5) at (a5) {$1$};

        \draw[thick] (a1) --  (a2) -- (a3) -- (a1) -- cycle;
        \draw[thick] (a1) --  (a4) -- (a5) -- (a1) -- cycle;
      
    \end{scope}
\end{tikzpicture}
    \caption{The bowtie graph and its $G$-sequence coloring for $\pi=[31254]$.}
    \label{fig:gsequence}
\end{figure}
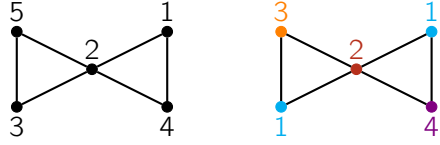

\begin{example}\label{ex:gsequence}
    Consider the bowtie graph $G$ in Figure~\ref{fig:gsequence} and the permutation $\pi = [31254] \in S_5$. Then  $\rk(3) = 1$, $\rk(1)=1$, $\rk(2)=2$, $\rk(5)=3$, and $\rk(4)=4$. With these rank values we can find that $\asc_G(\pi) = \{ 2,3,4\}$, and so our $G$-sequence is the ordered set partition $31/2/5/4$.
\end{example}

Moreover, Steingrímsson proved the following \cite[Lemma~4]{steingrimssoncoloring}.

\begin{lemma}[Steingrímsson]\label{lem:independentgsequence}
    Let $(A_1,\ldots, A_{\ascnum_G(\pi)+1})$ be the $G$-sequence of a permutation $\pi \in S_d$. 
    Each $A_j$ is an independent set in $G$, i.e., no two vertices in $A_j$ are adjacent, for any~$j$.
\end{lemma}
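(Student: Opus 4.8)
The plan is to argue by contradiction: suppose some block $A_j$ in the $G$-sequence of $\pi$ contains two adjacent vertices $\pi(a)$ and $\pi(b)$ with $a < b$ (both indices lying in the same block, so between consecutive $G$-ascents $i_{j-1} < a < b \le i_j$, with the usual conventions $i_0 = 0$ and $i_{\ascnum_G(\pi)+1} = d$). The goal is to produce a $G$-ascent strictly between positions $a$ and $b$, which contradicts the fact that the entire range $[i_{j-1}+1, i_j]$ contains no $G$-ascents. This is essentially the same mechanism used in the first half of the proof of Proposition~\ref{prop:graphicaldecomposition}, where an edge forces a strict inequality in the cone $\widehat{\Delta_\pi}$.

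The key steps are as follows. First, since $\{\pi(a), \pi(b)\} \in E$ and $a < b$, the definition of rank gives $\rk(\pi(b)) \ge \rk(\pi(a)) + 1$: any path realizing $\rk(\pi(a))$ ending at position $a$ can be extended by the edge to $\pi(b)$, yielding a path ending at position $b$ that is one longer. In particular $\rk(\pi(a)) < \rk(\pi(b))$. Second, suppose for contradiction that every position $m$ with $a \le m < b$ is a $G$-descent. By the definition of $G$-descent, at each such position the rank is weakly decreasing, so $\rk(\pi(a)) \ge \rk(\pi(a+1)) \ge \cdots \ge \rk(\pi(b))$, contradicting $\rk(\pi(a)) < \rk(\pi(b))$. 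Hence there exists a position $m$ with $a \le m < b$ that is a $G$-ascent. Third, observe $a \le m < b$ together with $i_{j-1} < a$ and $b \le i_j$ gives $i_{j-1} < m < i_j$, so $m$ is a $G$-ascent lying strictly inside the index range defining block $A_j$; but by construction the $G$-ascents of $\pi$ are exactly $\{i_1, \ldots, i_{\ascnum_G(\pi)}\}$, none of which lies strictly between two consecutive such values. This contradiction completes the argument. One should also handle the boundary blocks $A_1$ and $A_{\ascnum_G(\pi)+1}$, but the same inequalities apply verbatim once one uses $i_0 = 0$ and $i_{\ascnum_G(\pi)+1} = d$.

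I do not expect a genuine obstacle here: the statement is a direct consequence of the rank-monotonicity argument already deployed in Proposition~\ref{prop:graphicaldecomposition}, simply read ``in reverse.'' The only point requiring minor care is the bookkeeping of which positions belong to a given block versus where the $G$-ascents sit — making sure the strict inequality $i_{j-1} < m < i_j$ is correctly extracted from $i_{j-1}+1 \le a \le m < b \le i_j$ — and correctly invoking that every position strictly between consecutive ascent-indices is, by definition of the $G$-sequence, not a $G$-ascent (equivalently, is a $G$-descent). If one prefers a cleaner phrasing, one can note directly that a $G$-ascent at position $m$ with $i_{j-1} < m < i_j$ would contradict $m \notin \asc_G(\pi) = \{i_1, \ldots, i_{\ascnum_G(\pi)}\}$.
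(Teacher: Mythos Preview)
Your argument is correct and matches the paper's approach: the paper does not give a formal proof but merely remarks that ``an edge within a block would force the existence of a new ascent,'' which is precisely the rank-monotonicity contradiction you spell out (and which, as you observe, is the same mechanism used in the proof of Proposition~\ref{prop:graphicaldecomposition}).
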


It is straightforward to see why Lemma~\ref{lem:independentgsequence} holds, as an edge within a block would force the existence of a new ascent.
Due to Lemma~\ref{lem:independentgsequence}, we can use the $G$-sequence to define a proper coloring, which we will show relates to the numerators in the expression of the $q$-chromatic generating function.
For a graph $G$ on $[d]$ and $\pi \in S_d$, suppose $\pi$ has $G$-sequence $(A_1,\ldots,A_{\ascnum_G(\pi)+1})$.
We define the \Def{$G$-sequence coloring of $\pi$} as 
\[
    w_{\pi}: [d] \to [\ascnum_G(\pi)+1] \quad \text{where } w_\pi(i) :=j \text{ if } i \in A_j.
\]
The $G$-sequence coloring of Example~\ref{ex:gsequence} is shown in Figure~\ref{fig:gsequence}.

We now prove that the numerators found in Theorem~\ref{thm:genfuncweighted} can be computed via~$w_{\pi}$.

\begin{proposition}\label{prop:weightedcolorsum}
    Let $G$ be a graph on $[d]$, let $\pi \in S_d$ be a permutation and  $\lambda \in \mathbb{Z}_{>0}^d$ a linear
form. Then
    \[
        \Lambda + \sum_{j \in \asc_G(\pi)} \lambda_{\pi(j+1)} + \cdots + \lambda_{\pi(d)} = \sum_{i \in [d]}
\lambda_i w_{\pi}(i) \, .
    \]
\end{proposition}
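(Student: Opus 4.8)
The plan is to show both sides of the claimed identity count, for each color class $j \in [\ascnum_G(\pi)+1]$, the total $\lambda$-weight $\sum_{i \in A_j} \lambda_i$ of the vertices in the $j$-th block of the $G$-sequence, weighted by $j$. On the right-hand side we clearly have $\sum_{i \in [d]} \lambda_i w_\pi(i) = \sum_{j=1}^{\ascnum_G(\pi)+1} j \sum_{i \in A_j} \lambda_i$. So it suffices to rewrite the left-hand side in the same form. First I would abbreviate $W_m := \sum_{i \in A_m} \lambda_i$ for the total $\lambda$-weight of block $A_m$, so the target becomes $\Lambda + \sum_{j \in \asc_G(\pi)} (\lambda_{\pi(j+1)} + \cdots + \lambda_{\pi(d)}) = \sum_{m=1}^{\ascnum_G(\pi)+1} m\, W_m$.

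The key observation is that $\asc_G(\pi) = \{i_1 < i_2 < \cdots < i_{\ascnum_G(\pi)}\}$ are exactly the positions at which consecutive blocks of the $G$-sequence meet: the block $A_m$ consists of $\pi$-values in positions $i_{m-1}+1, \ldots, i_m$ (with $i_0 := 0$ and $i_{\ascnum_G(\pi)+1} := d$). Hence for each $t \in [\ascnum_G(\pi)]$, the tail sum $\lambda_{\pi(i_t+1)} + \cdots + \lambda_{\pi(d)}$ is precisely $W_{t+1} + W_{t+2} + \cdots + W_{\ascnum_G(\pi)+1}$, the total $\lambda$-weight of all blocks strictly after $A_t$. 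Summing this over $t = 1, \ldots, \ascnum_G(\pi)$ and counting how many times each $W_m$ appears: the weight $W_m$ is counted once for every $t < m$, i.e.\ exactly $m-1$ times. Therefore $\sum_{j \in \asc_G(\pi)} (\lambda_{\pi(j+1)} + \cdots + \lambda_{\pi(d)}) = \sum_{m=1}^{\ascnum_G(\pi)+1} (m-1) W_m$. Adding $\Lambda = \sum_{i \in [d]} \lambda_i = \sum_{m=1}^{\ascnum_G(\pi)+1} W_m$ to both sides gives $\Lambda + \sum_{j \in \asc_G(\pi)} (\lambda_{\pi(j+1)} + \cdots + \lambda_{\pi(d)}) = \sum_{m=1}^{\ascnum_G(\pi)+1} m\, W_m = \sum_{i \in [d]} \lambda_i w_\pi(i)$, which is the desired identity.

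I expect the only real point requiring care is the bookkeeping in the previous paragraph — correctly identifying which positions the blocks occupy in terms of the ascent set (handling the edge cases $i_0 = 0$ and the final block ending at $d$, and the degenerate case $\asc_G(\pi) = \emptyset$ where there is a single block $A_1 = [d]$ and both sides equal $\Lambda$), and verifying that $W_m$ is counted exactly $m-1$ times in the double sum. Everything else is a direct unwinding of the definitions of the $G$-sequence and the $G$-sequence coloring $w_\pi$. No deeper structural input (such as Lemma~\ref{lem:independentgsequence}) is needed for this particular identity; independence of the blocks matters only for $w_\pi$ to be a proper coloring, not for the weight computation.
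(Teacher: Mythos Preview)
Your proposal is correct and follows essentially the same approach as the paper: both arguments identify the blocks $A_m$ via the $G$-ascent positions, rewrite each tail sum $\lambda_{\pi(j+1)}+\cdots+\lambda_{\pi(d)}$ as the sum of block weights $W_{t+1}+\cdots+W_{\ascnum_G(\pi)+1}$, observe that $W_m$ is counted $m-1$ times, and then add $\Lambda=\sum_m W_m$ to obtain $\sum_m m\,W_m=\sum_{i\in[d]}\lambda_i\,w_\pi(i)$. Your introduction of the shorthand $W_m$ makes the bookkeeping slightly cleaner than the paper's display, but the substance is the same.
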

\begin{proof}
    For $\pi = [\pi(1) \pi(2) \cdots \pi(d)] \in S_d$, let $i_2 < \cdots < i_{\ascnum_G(\pi)+1}$ be the elements
of $\asc_G(\pi)$, and set $i_1 := 0$, $i_{\ascnum_G(\pi)+2}:=d$. Then by definition $A_j=\{\pi({i_j +1}),
\pi({i_j +2}), \ldots, \pi({i_{j+1}})\}$, and so position $i_2$ is given by $|A_1|$, $i_3$ is given by $|A_1| +
|A_2|$, and so on. Thus 
    \begin{alignat*}{2}
        \Lambda + \sum_{j \in A} \lambda_{\pi(j+1)} + \cdots + \lambda_{\pi(d)} &= \Lambda + \lambda_{\pi(i_2 +1)} + \cdots + \lambda_{\pi(i_3)}&+ \lambda_{\pi(i_3+1)}+\cdots + \lambda_{\pi(i_{\ascnum_G(\pi) +1 } +1 )} + \cdots + \lambda_{\pi(d)} \\
        & &+\lambda_{\pi(i_3+1)}+\cdots + \lambda_{\pi(i_{\ascnum_G(\pi)+1 } +1 )} + \cdots + \lambda_{\pi(d)}\\
        & &\vdots \hspace{4.5em}\\
        & &+ \lambda_{\pi(i_{\ascnum_G(\pi)+1 } +1 )} + \cdots + \lambda_{\pi(d)} \\
        &= \Lambda + \sum_{k = 1}^{\ascnum_G(\pi)+1} (k-1) (\lambda_{\pi(i_k +1)} + \cdots + \lambda_{\pi(i_{k+1})}) \hspace{-10em} \\
        &= \Lambda + \sum_{k = 1}^{\ascnum_G(\pi)+1} (k-1) \left( \sum_{a \in A_k} \lambda_a \right)\hspace{-10em} \\
        &= \Lambda - \sum_{k = 1}^{\ascnum_G(\pi)+1} \sum_{a \in A_k} \lambda_a +  \sum_{k = 1}^{\ascnum_G(\pi)+1}  \sum_{a \in A_k} k \lambda_a \hspace{-10em} \\
        &= \Lambda - \Lambda + \sum_{k = 1}^{\ascnum_G(\pi)+1}  \sum_{a \in A_k} w_\pi(a) \lambda_a \hspace{-10em} \\
        &=  \sum_{i \in [d]} w_{\pi}(\pi(i)) \lambda_{\pi(i)} \, .
    \end{alignat*}
    The third equality holds as $A_j=\{\pi({i_j +1}), \pi({i_j +2}), \ldots, \pi({i_{j+1}})\}$. 
    The fifth equality holds since $A_1,A_2,\ldots$ partition $[d]$; hence summing over all sets in this
partition gives $\Lambda$. 
    Moreover, each element $a \in A_k$ is given the weight $k = w_{\pi,A}(a)$ and $\lambda_{a}$. 
    We can further simplify \[ \sum_{i \in [d]} w_{\pi}(\pi(i)) \lambda_{\pi(i)} = \sum_{\pi^{-1}(j) \in [d]} w_{\pi}(j) \lambda_{j} = \sum_{i \in [d]} w_\pi(i)
\lambda_i \, , \] proving the desired equation. 
\end{proof}

\begin{corollary}
    For $G$ a graph on $[d]$ and a linear form $\lambda \in \mathbb{Z}^d$,
    \[
        \sum_{n \geq 0} \chi^\lambda_G(q,n) z^n = \sum_{\pi \in S_d} \frac{q^{\sum_{i \in [d]} \lambda_i
w_\pi(i)} z^{\ascnum_G(\pi)+1} }{(1-z)(1-q^{\lambda_{\pi(d)}}z) (1-q^{\lambda_{\pi(d)} + \lambda_{\pi(d-1)} } z)
\cdots (1-q^\Lambda z)}  \, .
    \]
\end{corollary}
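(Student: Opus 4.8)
This corollary is an immediate consequence of combining Theorem~\ref{thm:genfuncweighted} with Proposition~\ref{prop:weightedcolorsum}. The plan is as follows. First I would recall the formula from Theorem~\ref{thm:genfuncweighted}, which expresses $\sum_{n \geq 0} \chi^\lambda_G(q,n) z^n$ as a sum over $\pi \in S_d$ of rational functions whose denominators are exactly the products $(1-z)(1-q^{\lambda_{\pi(d)}}z)(1-q^{\lambda_{\pi(d)}+\lambda_{\pi(d-1)}}z)\cdots(1-q^\Lambda z)$ appearing in the statement, and whose numerators are $q^{\Lambda + \sum_{j \in \asc_G(\pi)} (\lambda_{\pi(j+1)} + \cdots + \lambda_{\pi(d)})} \, z^{\ascnum_G(\pi)+1}$.

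Next, I would invoke Proposition~\ref{prop:weightedcolorsum}, which identifies the exponent of $q$ in that numerator: for every $\pi \in S_d$ and every linear form $\lambda \in \mathbb{Z}_{>0}^d$, one has
\[
    \Lambda + \sum_{j \in \asc_G(\pi)} \lambda_{\pi(j+1)} + \cdots + \lambda_{\pi(d)} \; = \; \sum_{i \in [d]} \lambda_i w_{\pi}(i) \, ,
\]
where $w_\pi$ is the $G$-sequence coloring of $\pi$. Substituting this identity into the numerator of each summand in the Theorem~\ref{thm:genfuncweighted} expression, term by term over $\pi \in S_d$, produces precisely the right-hand side of the corollary, since the denominators and the $z$-powers $z^{\ascnum_G(\pi)+1}$ are left unchanged. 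A brief remark is warranted that Proposition~\ref{prop:weightedcolorsum} is stated for $\lambda \in \mathbb{Z}_{>0}^d$ while the corollary allows $\lambda \in \mathbb{Z}^d$; the extension is harmless because the algebraic identity of exponents holds as a polynomial identity in the $\lambda_i$ regardless of sign, so it remains valid for arbitrary integer linear forms (alternatively, one restricts to the positive case where $\chi^\lambda_G(q,n)$ has its primary combinatorial meaning).

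There is no real obstacle here: the content has already been established in the two cited results, and the proof amounts to a single substitution. The only point requiring a sentence of care is the mild generalization of the hypothesis on $\lambda$ noted above.
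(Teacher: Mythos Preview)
Your proposal is correct and matches the paper's approach exactly: the corollary is stated immediately after Proposition~\ref{prop:weightedcolorsum} with no separate proof, because it is precisely the substitution of that proposition's identity into the numerator of Theorem~\ref{thm:genfuncweighted}. Your remark about the $\mathbb{Z}_{>0}^d$ versus $\mathbb{Z}^d$ hypothesis is a nice catch that the paper does not address.
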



\section{The linear form $\lambda=\mathbbm{1}$}\label{sec:linearform1}

In this section, we focus our attention on the special case $\lambda=\mathbbm{1}$.
We investigate consequences of our multivariate generating function identities and connections with quasisymmetric functions.

\subsection{A combinatorial interprentation of $q$-binomial basis representations.}
We will now apply the results from Section~\ref{sec:gstats} to the special linear form $\lambda = \mathbbm{1}
\in \mathbb{Z}^d$, giving an expression of the coefficients of $\chi_G^\lambda(q,n)$ in the basis $\left\{\qbinom{n+k}{d} : 0 \leq k \leq d\right\}$. 
In addition, via $G$-sequences we can omit some summands in this basis expression.

\begin{theorem}\label{thm:qchromaticallones}
    For a graph $G = ([d],E)$, 
    \[
        \sum_{n \geq 0} \chi^\mathbbm{1}_G(q,n) z^n =  \frac{\sum_{\pi \in S_d} q^{d + \sum_{j \in \asc_G(\pi)}
d-j} z^{\ascnum_G(\pi)+1} }{(1-z)(1-q z) (1-q^{2} z) \cdots (1-q^d z)} \, .
    \]
\end{theorem}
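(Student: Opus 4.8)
The plan is to obtain Theorem~\ref{thm:qchromaticallones} directly from Theorem~\ref{thm:genfuncweighted} by specializing the linear form to $\lambda = \mathbbm{1}$ and observing that the resulting denominator no longer depends on the permutation, so it may be pulled out of the sum over $S_d$.

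First I would set $\lambda_i = 1$ for all $i \in [d]$, so that $\Lambda = \sum_{i \in [d]} \lambda_i = d$. In the numerator exponent of the $\pi$-summand of Theorem~\ref{thm:genfuncweighted}, each inner sum $\lambda_{\pi(j+1)} + \cdots + \lambda_{\pi(d)}$ consists of exactly $d - j$ terms, each equal to $1$, hence equals $d - j$. Therefore the numerator exponent becomes $d + \sum_{j \in \asc_G(\pi)} (d - j)$, which matches the numerator in the claimed formula, and the power of $z$ is unchanged at $z^{\ascnum_G(\pi)+1}$.

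Next I would simplify the denominator. For each $i \in [d]$, the partial sum $\lambda_{\pi(i)} + \lambda_{\pi(i+1)} + \cdots + \lambda_{\pi(d)}$ has $d - i + 1$ terms, so it equals $d - i + 1$, and the corresponding denominator factor is $1 - q^{\,d-i+1} z$. As $i$ runs over $1, 2, \ldots, d$, these factors are $1 - q^d z,\ 1 - q^{d-1} z,\ \ldots,\ 1 - q z$ listed in some order, and together with the factor $1 - z$ they give the product $(1-z)(1-qz)(1-q^2 z)\cdots(1-q^d z)$. The key point is that this product is the \emph{same} for every $\pi \in S_d$.

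Finally, since the denominator is independent of $\pi$, I would factor it out of the sum over $S_d$, leaving $\sum_{\pi \in S_d} q^{\,d + \sum_{j \in \asc_G(\pi)}(d-j)} z^{\ascnum_G(\pi)+1}$ in the numerator, which is exactly the stated identity. There is no genuine obstacle here; the only point to verify with any care is the reindexing of the denominator factors, namely that the multiset of exponents $\{\, d - i + 1 : i \in [d] \,\}$ is precisely $\{1, 2, \ldots, d\}$, which is immediate. As an alternative, one could instead start from the Corollary following Proposition~\ref{prop:weightedcolorsum}, use that $\sum_{i \in [d]} \lambda_i w_\pi(i) = \sum_{i \in [d]} w_\pi(i)$ when $\lambda = \mathbbm{1}$, and invoke Proposition~\ref{prop:weightedcolorsum} itself to identify the numerator exponent; the direct specialization above is shorter.
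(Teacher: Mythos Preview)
Your proposal is correct and follows essentially the same approach as the paper: both specialize Theorem~\ref{thm:genfuncweighted} to $\lambda = \mathbbm{1}$, observe that the denominator becomes $\pi$-independent so it can be pulled out of the sum, and simplify the numerator exponent to $d + \sum_{j \in \asc_G(\pi)}(d-j)$. Your write-up is in fact a bit more explicit about the denominator reindexing than the paper's one-line remark that ``all permutations have the same denominator.''
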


\begin{proof}
    Upon setting $\lambda=\mathbbm{1}$ in Theorem~\ref{thm:genfuncweighted}, all permutations have the same denominator. Moreover, for each permutation $\pi \in S_d$, 
    \[
        q^{\Lambda + \sum_{j \in \asc_G(\pi)} \lambda_{\pi(j+1)} + \cdots + \lambda_{\pi(d)}}
z^{\ascnum_G(\pi)+1}  = q^{d + \sum_{j \in \asc_G(\pi)} d-j } z^{\ascnum_G(\pi)+1}. \qedhere 
    \]
\end{proof}

\begin{corollary}\label{cor:qchromaticqbinomial}
    Let $\xi$ be the chromatic number of~$G$. Then
    \[
        \chi_G^\mathbbm{1}(q,n) = \sum_{j=0}^{d-\xi} \left( \qbinom{n+j}{d} \sum_{\substack{\pi \in S_d \\
\desnum_G(\pi) = j}} q^{d+\sum_{j \in \asc_G(\pi)} d-j} \right) .
    \]
\end{corollary}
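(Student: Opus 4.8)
The plan is to extract the coefficient of $\qbinom{n+j}{d}$ from the rational generating function in Theorem~\ref{thm:qchromaticallones} by a partial-fraction-style manipulation, and to explain why all summands with $\desnum_G(\pi) > d-\xi$ vanish. First I would recall the standard generating-function identity for $q$-binomial coefficients, namely that
\[
    \sum_{n \geq 0} \qbinom{n+j}{d} z^n = \frac{z^{d-j}}{(1-z)(1-qz)(1-q^2 z) \cdots (1-q^d z)}
\]
for $0 \le j \le d$; this is the $q$-analogue of the familiar $\sum_n \binom{n+j}{d} z^n = z^{d-j}/(1-z)^{d+1}$ and follows from iterating the Pascal-type recursion, or from the known expansion of $1/\prod_{i=0}^d(1-q^i z)$ in the $q$-binomial basis. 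Granting this, the common denominator in Theorem~\ref{thm:qchromaticallones} is exactly the denominator above, so it remains to match numerators: the $z$-power attached to $\pi$ in Theorem~\ref{thm:qchromaticallones} is $z^{\ascnum_G(\pi)+1}$, and since $\ascnum_G(\pi) + \desnum_G(\pi) = d-1$ (every position in $[d-1]$ is either a $G$-ascent or a $G$-descent, by the trichotomy in the definitions), we have $\ascnum_G(\pi)+1 = d - \desnum_G(\pi)$. Writing $j := \desnum_G(\pi)$, the term for $\pi$ is $q^{d + \sum_{i \in \asc_G(\pi)}(d-i)} z^{d-j}$, which is precisely $q^{d+\sum_{i\in\asc_G(\pi)}(d-i)}$ times $\sum_n \qbinom{n+j}{d} z^n$. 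Summing over $\pi$, grouping by the value $j = \desnum_G(\pi)$, and equating coefficients of $z^n$ gives
\[
    \chi_G^\mathbbm{1}(q,n) = \sum_{j=0}^{d-1} \qbinom{n+j}{d} \sum_{\substack{\pi \in S_d \\ \desnum_G(\pi)=j}} q^{d+\sum_{i \in \asc_G(\pi)}(d-i)} \, .
\]

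It then remains to truncate the outer sum at $j = d-\xi$, i.e.\ to show that no permutation $\pi \in S_d$ has $\desnum_G(\pi) > d-\xi$, equivalently $\ascnum_G(\pi) < \xi - 1$, equivalently the $G$-sequence of $\pi$ has fewer than $\xi$ blocks. This is where Lemma~\ref{lem:independentgsequence} does the work: the $G$-sequence $(A_1, \ldots, A_{\ascnum_G(\pi)+1})$ is an ordered set partition of $[d]$ into independent sets of $G$, hence its block-index map $w_\pi$ is a proper coloring of $G$ using exactly $\ascnum_G(\pi)+1$ colors. Since any proper coloring uses at least $\xi$ colors, $\ascnum_G(\pi)+1 \ge \xi$, so $\desnum_G(\pi) = d-1-\ascnum_G(\pi) \le d-\xi$. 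Therefore the inner sum is empty for $j > d-\xi$, and the stated truncated formula follows.

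I do not expect a serious obstacle here; the argument is a bookkeeping exercise once the two ingredients — the $q$-binomial generating-function identity and the ascent/descent complementarity $\ascnum_G + \desnum_G = d-1$ — are in place. The one point that warrants care is the $q$-binomial identity itself: I would either cite it (it appears, e.g., in standard references on $q$-series or in Bajo et al.~\cite{qchromatic}) or include a one-line induction on $d$ using $\qbinom{n+j}{d} = \qbinom{n-1+j}{d} + q^{?}\qbinom{n-1+j}{d-1}$ with the correct $q$-power, being attentive to the exact form of the $q$-Pascal recursion so the exponent bookkeeping is consistent with the normalization $[n]_q = 1 + q + \cdots + q^{n-1}$ used in the paper. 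A secondary sanity check worth doing is the case $E = \varnothing$: then every position is an ordinary descent or ascent, $\xi = 1$, and the formula should reduce to the known $q$-binomial expansion $\chi_G^\mathbbm{1}(q,n) = q^d \cdot [\text{something}] = \sum_{j} \qbinom{n+j}{d}\sum_{\desnum(\pi)=j} q^{d+\maj\text{-like term}}$, matching the classical Eulerian-type expansion of $(1+q+\cdots+q^{n-1})^d$ shifted by $q^d$.
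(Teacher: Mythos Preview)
Your proposal is correct and follows essentially the same approach as the paper: both start from Theorem~\ref{thm:qchromaticallones}, invoke the $q$-binomial generating-function identity $\sum_{n\ge 0}\qbinom{n+j}{d}z^n = z^{d-j}/\prod_{i=0}^d(1-q^i z)$ (the paper uses the equivalent form with $j=d$ and then shifts indices via a Cauchy product), use $\ascnum_G(\pi)+\desnum_G(\pi)=d-1$ to convert $z^{\ascnum_G(\pi)+1}$ into $z^{d-\desnum_G(\pi)}$, and appeal to Lemma~\ref{lem:independentgsequence} to truncate the sum at $j=d-\xi$. The only cosmetic difference is that you match each $z^{d-j}$ directly to its $q$-binomial series, whereas the paper expands the full product and then reindexes; the content is identical.
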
 

\begin{proof}
    For $\pi \in S_d$, set $\alpha_\pi = d+\sum_{j \in \asc_G(\pi)} d-j$, and let 
    \[
        \sum_{\pi \in S_d} q^{\alpha_\pi} z^{\ascnum_G(\pi)+1} = \sum_{i=0}^m a_i(q)z^i \in
\mathbb{Z}[q,z] \, ,
    \]
    so $a_i(q) = \sum q^{\alpha_\pi}$ where we sum over all permutations $\pi$ such that $\ascnum_G(\pi)+1 = i$,
i.e., $\ascnum_G(\pi) = i-1$. Hence
    \begin{align*}
        \sum_{t \geq 0} \chi_G^\mathbbm{1}(q,t) z^t &= \frac{a_0(q) + \cdots + a_m(q) z^m}{(1-z)(1-qz) \cdots (1-q^dz)} \\
        &= \left(a_0(q) + \cdots + a_m(q) z^m \right) \left( \sum_{i \geq 0} \qbinom{d+i}{d} z^i\right) \\
        &= \sum_{j \geq 0}\left( \sum_{i=0}^j \qbinom{d+j-i}{d} a_i(q)\right) z^j,
    \end{align*}
    and so $\chi_G^\mathbbm{1}(q,n) = \sum_{i=0}^n \qbinom{d+n-i}{d} a_i(q)$. 

We will now show that certain summands in this $q$-binomial expression are zero. 
    If $i > d$ then $\ascnum_G(\pi) +1 > d$ and so $\ascnum_G(\pi) > d-1$. 
    However, $\asc_G(\pi) \subseteq [d-1]$ so the terms with $i > d$ are zero. 
    If $i < \xi$ then $\ascnum_G(\pi) +1 < \xi$ and so the $G$-sequence would give a proper coloring of less
than $\xi$ colors, which contradicts Lemma~\ref{lem:independentgsequence}. So these terms are also zero. 
    Thus we may change indices to deduce
\begin{align*}
        \chi_G^\mathbbm{1}(q,n)
        &= \sum_{i=0}^n \qbinom{d+n-i}{d} a_i(q) 
         = \sum_{i=\xi}^d \qbinom{d+n-i}{d} a_i(q) 
         = \sum_{k=0}^{d-\xi} \qbinom{n+d - \xi -k}{d} a_{\xi+k}(q) \\
        &= \sum_{j=0}^{d-\xi} \qbinom{n+j}{d} a_{d-j}(q) \, .
\end{align*}    
Since permutations indexed by $a_{d-j}(q)$ have $\ascnum_G(\pi)+1 = d-j$ which implies $\ascnum_G(\pi) = d-1-j$, we must have $\desnum_G(\pi) = j$. This proves 
    \[
        \chi_G^\mathbbm{1}(q,n) = \sum_{j=0}^{d-\xi} \left( \qbinom{n+j}{d} \sum_{\substack{\pi \in S_d \\
\desnum_G(\pi) = j}} q^{ d+\sum_{j \in \asc_G(\pi)} d-j} \right). 
    \]
\end{proof}

\begin{remark}
    For a graph $G$ on $[d]$ with chromatic number $\xi$, we can express
    \[
        \sum_{\pi \in S_d} q^{\alpha_\pi} z^{\ascnum_G(\pi)+1} = a_\xi (q) z^\xi+ \cdots + a_d(q) z^d \in
\mathbb{Z}[q,z]
    \]
    where $\pi \in S_d$, $\alpha_\pi := d+\sum_{j \in \asc_G(\pi)} d-j$, and $a_i(q) := \sum q^{\alpha_\pi}$ where we sum over all permutations $\pi$ such that $\ascnum_G(\pi)+1 = i$, by the argument given in the proof of Corollary~\ref{cor:qchromaticqbinomial}.
\end{remark}

\begin{figure}
    \centering
    \begin{tikzpicture}
    \begin{scope}[xshift=0, yshift=0, scale=1]    
        \vertex[fill](a1) at (0,0) {};
        \vertex[fill](a2) at (1,0) {};
        \vertex[fill](a3) at (2,0) {};
        \vertex[fill](a4) at (3,0) {};

        \node[anchor=north] at (a1) {$1$};
        \node[anchor=north] at (a2) {$2$};
        \node[anchor=north] at (a3) {$3$};
        \node[anchor=north] at (a4) {$4$};

        \node[anchor=west] (b1) at (4,0) {\large $\frac{q^{10}z^4 + (3q^9 + 5q^8 + 3q^7)z^3 + (3q^7 + 5q^6 + 3q^5)z^2 + q^4z}{(1-z)(1-qz)(1-q^2z)(1-q^3z)(1-q^4z)}$};
        \node[] (b2) at (-1,0) {$G_1$};

    \end{scope}

    \begin{scope}[xshift=0, yshift=-40, scale=1]    
        \vertex[fill](a1) at (0,0) {};
        \vertex[fill](a2) at (1,0) {};
        \vertex[fill](a3) at (2,0) {};
        \vertex[fill](a4) at (3,0) {};

        \node[anchor=north] at (a1) {$1$};
        \node[anchor=north] at (a2) {$2$};
        \node[anchor=north] at (a3) {$3$};
        \node[anchor=north] at (a4) {$4$};

        \node[anchor=west] (b1) at (4,0) {\large $\frac{8q^{10}z^4 + (4q^9 + 6q^8 + 4q^7)z^3 + 2q^6z^2}{(1-z)(1-qz)(1-q^2z)(1-q^3z)(1-q^4z)}$};
        \node[] (b2) at (-1,0) {$G_2$};

        \draw[thick] (a1) --  (a2) -- (a3) -- (a4);
    \end{scope}

    \begin{scope}[xshift=0, yshift=-95, scale=1]    
        \vertex[fill](a1) at (1.5,0) {};
        \vertex[fill](a2) at (1.5,1) {};
        \vertex[fill](a3) at (1.5+0.86602,-0.5) {};
        \vertex[fill](a4) at (1.5-0.86602,-0.5) {};

        \node[anchor=north] at (a1) {$1$};
        \node[anchor=north west] at (a2) {$2$};
        \node[anchor=north] at (a3) {$3$};
        \node[anchor=north] at (a4) {$4$};

        \node[anchor=west] (b1) at (4,0) {\large $\frac{8q^{10}z^4 + (5q^9 + 4q^8 + 5q^7)z^3 + (q^7 + q^5)z^2}{(1-z)(1-qz)(1-q^2z)(1-q^3z)(1-q^4z)}$};
        \node[] (b2) at (-1,0) {$G_3$};

        \draw[thick] (a1) --  (a2);
        \draw[thick] (a1) --  (a3);
        \draw[thick] (a1) --  (a4);
    \end{scope}

    \begin{scope}[xshift=0, yshift=-150, scale=1]    
        \vertex[fill](a1) at (1.5-0.5,-0.5) {};
        \vertex[fill](a2) at (1.5-0.5,0.5) {};
        \vertex[fill](a3) at (1.5+0.5,-0.5) {};
        \vertex[fill](a4) at (1.5+0.5,0.5) {};

        \node[anchor=north] at (a1) {$1$};
        \node[anchor=north east] at (a2) {$2$};
        \node[anchor=north] at (a3) {$3$};
        \node[anchor=north west] at (a4) {$4$};

        \node[anchor=west] (b1) at (4,0) {\large $\frac{24q^{10}z^4}{(1-z)(1-qz)(1-q^2z)(1-q^3z)(1-q^4z)}$};
        \node[] (b2) at (-1,0) {$G_4$};

        \draw[thick] (a1) --  (a3) --(a4) -- (a2) -- (a1);
        \draw[thick] (a1) -- (a4);
        \draw[thick] (a2) -- (a3);
    \end{scope}

\end{tikzpicture}
    \caption{Graphs and their corresponding generating function of the $q$-chromatic polynomial with linear form $\mathbbm{1}$.}
    \label{fig:numeratorpolynomial}
\end{figure}
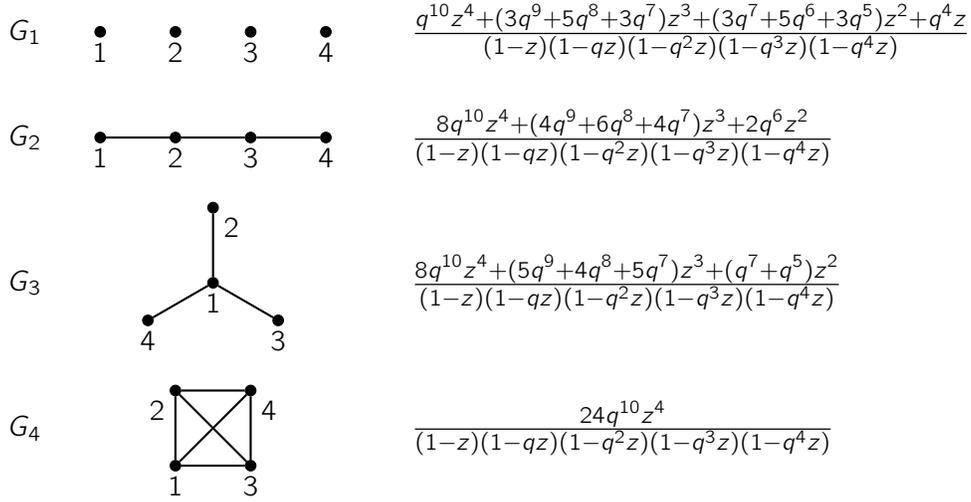

\begin{example}\label{ex:qchromaticgenfunction}
    We give the generating functions of the $q$-chromatic polynomials with linear form $\mathbbm{1}$ for the
graphs in Figure~\ref{fig:numeratorpolynomial}. 
    By Corollary~\ref{cor:qchromaticqbinomial}, we can express the $q$-chromatic polynomials as
\begin{align*}
        \chi_{G_1}^\mathbbm{1}(q,n) &= \qbinom{n+3}{4} q^4 + \qbinom{n+2}{4} (3q^7+5q^6+3q^5) + \qbinom{n+1}{4} (3 q^9 + 5 q^8+3 q^7) + \qbinom{n}{4} q^{10},
    \\
        \chi_{G_2}^\mathbbm{1}(q,n) &=\qbinom{n+2}{4} (2q^6) + \qbinom{n+1}{4} (4 q^9+6q^8+4q^7) + \qbinom{n}{4} (8 q^{10}),
    \\
        \chi_{G_3}^\mathbbm{1}(q,n)  &= \qbinom{n+2}{4} (q^7+q^5) + \qbinom{n+1}{4} (5q^9+4q^8+5q^7) + \qbinom{n}{4} (8 q^{10}),
    \\
        \chi_{G_4}^\mathbbm{1}(q,n)  &= \qbinom{n}{4} (24 q^{10}).
\end{align*} 
\end{example}

Corollary~\ref{cor:qchromaticqbinomial} gives a $q$-analogue of the above-mentioned result of
Chung--Graham~\cite{chunggrahamcoverpol} via setting $q=1$.
\begin{corollary}[Chung--Graham]\label{cor:chunggraham}
    For a graph $G$ on $d$ vertices,
    \[
        \chi_G(n) = \chi^\mathbbm{1}_G(1,n) = \sum_{j=0}^{d-\xi} \binom{n+j}{d} \left|\{ \pi \in S_d :
\desnum_G(\pi)=j\} \right|.
    \] 
\end{corollary}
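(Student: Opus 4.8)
The plan is to obtain Corollary~\ref{cor:chunggraham} as the $q=1$ specialization of Corollary~\ref{cor:qchromaticqbinomial}. First I would check that the left-hand side behaves as claimed: in the definition $\chi_G^\mathbbm{1}(q,n) = \sum_{c} q^{\sum_{v\in V} c(v)}$, where the sum ranges over all proper colorings $c:[d]\to[n]$, substituting $q=1$ turns each summand into $1$, so $\chi_G^\mathbbm{1}(1,n)$ counts the proper $n$-colorings of $G$. By definition this count is the chromatic polynomial $\chi_G(n)$, giving the first equality $\chi_G(n)=\chi_G^\mathbbm{1}(1,n)$.

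Next I would specialize the right-hand side of Corollary~\ref{cor:qchromaticqbinomial},
\[
    \chi_G^\mathbbm{1}(q,n) = \sum_{j=0}^{d-\xi}\left( \qbinom{n+j}{d} \sum_{\substack{\pi\in S_d \\ \desnum_G(\pi)=j}} q^{d+\sum_{j\in\asc_G(\pi)} d-j} \right),
\]
at $q=1$. The $q$-binomial coefficient $\qbinom{n+j}{d}$ evaluates to the ordinary binomial coefficient $\binom{n+j}{d}$ at $q=1$, since $[m]_q\big|_{q=1}=m$ for every $m$. Each monomial $q^{d+\sum_{j\in\asc_G(\pi)} d-j}$ becomes $1$, so the inner sum collapses to the cardinality $\lvert\{\pi\in S_d : \desnum_G(\pi)=j\}\rvert$. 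Putting these together yields exactly the asserted formula.

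The only point requiring a remark is that the substitution $q=1$ is legitimate: for each fixed $n$, both sides of Corollary~\ref{cor:qchromaticqbinomial} lie in $\mathbb{Z}[q]$ (the $q$-binomials are polynomials in $q$, and the inner sums are finite sums of monomials in $q$), so evaluating this polynomial identity at $q=1$ is valid. I do not expect any genuine obstacle here; the entire content of the corollary is already packaged in Corollary~\ref{cor:qchromaticqbinomial}, and the proof is a clean specialization together with the observation about the left-hand side.
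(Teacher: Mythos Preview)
Your proposal is correct and follows exactly the paper's approach: the paper simply states that Corollary~\ref{cor:chunggraham} is obtained from Corollary~\ref{cor:qchromaticqbinomial} by setting $q=1$, and your write-up spells out precisely that specialization along with the trivial identification $\chi_G(n)=\chi_G^{\mathbbm{1}}(1,n)$.
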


\subsection{Connections to symmetric functions}\label{subsec:symmetricfunctions}
The integer point transform encodes integer points as exponents of monomials.
We could have instead encoded integer points as the indices of monomials; doing so for the integer points of
$K_G$ defines the chromatic symmetric function.
By partitioning $K_G$, we expressed the integer point transform $\sigma_{K_G}$ as a sum of rational functions in Theorem~\ref{thm:multivariablechromaticgenerating}.
Chow~\cite{chow1999} provided an analogous  result for the chromatic symmetric function using Stanley's  theory
of $P$-partitions and quasisymmetric functions~\cite{stanleychromatic}. 
For this section, we assume readers have some background on quasisymmetric functions but we review some important terms \cite[Section~7.19]{enumerativecombinatorics}.
Let $\mathcal{Q}$ denote the algebra of quasisymmetric functions.
For any subset $S \subseteq [d-1]$ the \Def{fundamental quasisymmetric function} is
\[
    F_{S,d}(x) := \sum_{\substack{{i_1 \leq \cdots \leq i_d} \\ i_j < i_{j+1} \text{ if } j \in S }} x_{i_1}
x_{i_2} \cdots x_{i_d} \, .
\]
These fundamental quasisymmetric functions form a basis for quasisymmetric functions and Chow wrote the
chromatic symmetric function in this basis~\cite[Corollary~1]{chow1999}.
\begin{theorem}[Chow]\label{thm:chowchromatic}
    Let $G$ be a graph on $[d]$, then
    \[
        X_G = \sum_{S \subseteq [d-1]} N_S F_{S,d}
    \]
    where $N_S$ is the number of permutations with $G$-ascent set $S$.
\end{theorem}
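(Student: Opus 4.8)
The plan is to prove Chow's theorem (Theorem~\ref{thm:chowchromatic}) by combining our polyhedral decomposition with the standard $P$-partition-style bookkeeping that relates the integer point transform of $K_G$ to a sum of fundamental quasisymmetric functions. Since we already have, in Proposition~\ref{prop:graphicaldecomposition}, the disjoint decomposition $K_G = \biguplus_{\pi \in S_d} \Delta_\pi^G$ (the un-homogenized analogue, whose cones are $\{x \in \mathbb{R}_{>0}^d : 0 < x_{\pi(1)} \le \cdots \le x_{\pi(d)},\ x_{\pi(i)} < x_{\pi(i+1)} \text{ for } i \in \asc_G(\pi)\}$), the key observation is that encoding the integer points of $K_G$ by \emph{indices} rather than exponents — i.e., sending the lattice point $a = (a_1,\dots,a_d) \in \mathbb{Z}_{>0}^d$ to the monomial $x_{a_1} x_{a_2} \cdots x_{a_d}$ — turns each cone $\Delta_\pi^G$ into exactly a fundamental quasisymmetric function.

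First I would make precise the ``index'' encoding: define the linear map sending $K_G \cap \mathbb{Z}^d$ to $\mathcal{Q}$ by $a \mapsto \prod_{v \in V} x_{a_v}$, and observe that its image is $X_G$ (this is essentially the definition of the chromatic symmetric function, since integer points of $K_G$ are exactly proper colorings $c : V \to \mathbb{Z}_{>0}$, and the monomial $\prod_v x_{c(v)}$ matches the defining sum for $X_G$). Next, I would apply this encoding term-by-term to the decomposition of Proposition~\ref{prop:graphicaldecomposition}: the lattice points of $\Delta_\pi^G$ are precisely the tuples $a$ with $a_{\pi(1)} \le a_{\pi(2)} \le \cdots \le a_{\pi(d)}$ and with strict inequality $a_{\pi(i)} < a_{\pi(i+1)}$ forced exactly when $i \in \asc_G(\pi)$. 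Writing $i_j := a_{\pi(j)}$, the lattice points of $\Delta_\pi^G$ are in bijection with weakly increasing sequences $i_1 \le \cdots \le i_d$ of positive integers that are strictly increasing at the positions in $\asc_G(\pi)$, and the associated monomial is $x_{i_1} \cdots x_{i_d}$ (since reindexing by $\pi$ permutes the factors but not the monomial). Comparing with the definition of $F_{S,d}$ with $S = \asc_G(\pi)$, we get $\sigma_{\Delta_\pi^G}$, read in the index encoding, equal to $F_{\asc_G(\pi),d}$.

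Summing over $\pi \in S_d$ then yields $X_G = \sum_{\pi \in S_d} F_{\asc_G(\pi),d} = \sum_{S \subseteq [d-1]} N_S F_{S,d}$, where $N_S = \#\{\pi \in S_d : \asc_G(\pi) = S\}$, which is the claimed formula. (One cosmetic point: the paper's statement phrases $N_S$ as the number of permutations with $G$-ascent set $S$, matching this exactly; if one prefers Chow's original phrasing in terms of $G$-descents, it follows by the symmetry $F_{S,d} = F_{[d-1]\setminus S, d}$ under $x_i \mapsto x_{-i}$ or by replacing $G$ by itself and reversing, but I would just state it with ascents to match our conventions.)

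The main obstacle — really the only subtlety — is making the disjointness claim honest at the level of quasisymmetric functions: I need that the decomposition $K_G = \biguplus_\pi \Delta_\pi^G$ is a genuine partition of \emph{lattice points} (no point counted twice, none omitted), so that summing the $F_{\asc_G(\pi),d}$ recovers $X_G$ without overcounting. But this is exactly the content of Proposition~\ref{prop:graphicaldecomposition} (each $a \in K_G \cap \mathbb{Z}^d$ lies in a unique $\Delta_\pi^G$), so the obstacle is already cleared; I would just cite that proposition and note the half-open structure is what makes the sequence-strictness at $\asc_G(\pi)$ match $F_{\asc_G(\pi),d}$ exactly. A secondary bookkeeping check is that convergence/summation in $\mathcal{Q}$ is formal and term-by-term rearrangement over the countably many lattice points is legitimate, which is standard for quasisymmetric generating functions and needs only a one-line remark.
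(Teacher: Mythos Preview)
Your proposal is correct. The paper does not give its own proof of Theorem~\ref{thm:chowchromatic}; it cites Chow~\cite{chow1999}, whose original argument goes through Stanley's $P$-partition machinery (decompose proper colorings by acyclic orientations, then expand each $P$-partition generating function in fundamental quasisymmetric functions indexed by linear extensions of the induced poset --- essentially the structure the paper revisits in Section~\ref{sec:gstatsandorientations}). Your route is more direct: you apply the half-open decomposition of Proposition~\ref{prop:graphicaldecomposition} (in its un-homogenized form for $K_G$) and observe that encoding lattice points by \emph{indices} of the $x$-variables rather than exponents turns each $\Delta_\pi^G$ into precisely $F_{\asc_G(\pi),d}$. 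This is exactly the alternative the paper sketches but does not execute in the opening paragraph of Section~\ref{subsec:symmetricfunctions} (``We could have instead encoded integer points as the indices of monomials; doing so for the integer points of $K_G$ defines the chromatic symmetric function''). Your argument bypasses acyclic orientations and linear extensions entirely, deriving the formula in one step from the disjointness in Proposition~\ref{prop:graphicaldecomposition}; Chow's approach, by contrast, makes the $P$-partition structure explicit, which is what the paper later leverages for the bijection in Theorem~\ref{thm:bijectiongstats}.

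One minor caution: the parenthetical remark that $F_{S,d} = F_{[d-1]\setminus S,d}$ under some variable substitution is not a valid identity (the $F_{S,d}$ are linearly independent), so drop that aside; as you say, stating $N_S$ in terms of $G$-ascents already matches the paper's conventions and no translation is needed.
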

Since the $q$-chromatic polynomial with the linear form $\mathbbm{1}$ can be recovered from the principal specialization of the chromatic symmetric function, we can apply the principal specialization to Theorem~\ref{thm:chowchromatic} to deduce formulas for the $q$-chromatic polynomial.
In particular, we will see that the generating function for the principal specialization of order $m$ applied to Theorem~\ref{thm:chowchromatic} gives the same expression of the generating function we find for the $q$-chromatic polynomial with linear form $\mathbbm{1}$.
The \Def{principal specialization of order $m$} is the ring homomorphism $\ps_m: \mathcal{Q} \to \mathbb{Q}[q]$
given by
\[
    \ps_m(x_i) = \begin{cases}
        q^{i-1} & \text{if } 1 \leq i \leq m, \\
        0 & \text{otherwise.}
    \end{cases}
\]
Hence for a graph $G$ on $[d]$ and $n \in \mathbb{Z}_{>0}$,
\begin{align*}
    q^d \ps_n(X_G)
    &= q^d X_G(1,q,\ldots,q^{n-1},0,\ldots)
     = q^d \sum_{\substack{\text{proper colorings} \\ c:[d] \to [n]}} q^{\sum_{i \in [d]} c(i)-1}
     = \sum_{\substack{\text{proper colorings} \\ c:[d] \to [n]}} q^{\sum_{i \in [d]} c(i)} \\
    &= \chi_G^\mathbbm{1}(q,n) \, .
\end{align*}

Gessel gave the following expression for the generating function of the principal specialization of fundamental
quasisymmetric functions~\cite[Lemma 5.2]{gesseldescent}.
\begin{lemma}[Gessel]
    \[
        \sum_{m \geq 0} \ps_m(F_{S,d}) z^m =  \frac{z^{|S|+1} q^{\sum_{i \in S} d-i} }{(1-z)(1-qz) \cdots
(1-q^dz)} \, .
    \]
\end{lemma}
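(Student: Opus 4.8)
The plan is to expand the left-hand side combinatorially and recognize it as a product of geometric series after a change of variables into ``gap'' coordinates.

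First I would unwind the specialization. Substituting $x_{i_j}\mapsto q^{i_j-1}$ for $i_j\le m$ and $x_{i_j}\mapsto 0$ otherwise into the definition of $F_{S,d}$ gives
\[
    \ps_m(F_{S,d}) = \sum_{\substack{1\le i_1\le\cdots\le i_d\le m \\ i_j<i_{j+1}\text{ if }j\in S}} q^{(i_1-1)+\cdots+(i_d-1)},
\]
so $\sum_{m\ge0}\ps_m(F_{S,d})\,z^m$ is a sum over pairs consisting of a weakly increasing sequence $1\le i_1\le\cdots\le i_d$ that is strictly increasing at each position in $S$, together with an integer $m\ge i_d$, weighted by $q^{\sum_k(i_k-1)}z^m$.

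Next I would pass to gap coordinates: set $g_0:=i_1-1$, $g_k:=i_{k+1}-i_k$ for $1\le k\le d-1$, and $g_d:=m-i_d$. Since $S\subseteq[d-1]$, this is a bijection onto tuples $(g_0,\dots,g_d)$ of nonnegative integers subject only to the constraints $g_k\ge 1$ for $k\in S$, and one has $m=1+g_0+\cdots+g_d$ together with the telescoping identity $i_k-1=g_0+\cdots+g_{k-1}$, hence $\sum_{k=1}^d(i_k-1)=\sum_{j=0}^{d-1}(d-j)\,g_j$. Reading $d-j=0$ for $j=d$, the weighted sum therefore factors into one geometric series per gap variable:
\[
    \sum_{m\ge0}\ps_m(F_{S,d})\,z^m = z\prod_{j\in\{0,\dots,d\}\setminus S}\frac{1}{1-q^{d-j}z}\;\prod_{j\in S}\frac{q^{d-j}z}{1-q^{d-j}z},
\]
where the leading $z$ comes from the constant term in $m=1+\sum g_j$, the $j\notin S$ factors are $\sum_{g_j\ge0}(q^{d-j}z)^{g_j}$, and the $j\in S$ factors are $\sum_{g_j\ge1}(q^{d-j}z)^{g_j}$. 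Finally, the numerators from $j\in S$ multiply to $q^{\sum_{j\in S}(d-j)}z^{|S|}$, and since $j\mapsto d-j$ carries $\{0,\dots,d\}$ onto itself the denominators collect to $(1-z)(1-qz)\cdots(1-q^dz)$, giving exactly the claimed formula.

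I expect the only real obstacle to be the bookkeeping in the gap step: checking that the substitution is the asserted bijection with constraint set exactly $\{g_k\ge1:k\in S\}$, and verifying the exponent identity $\sum_{k=1}^d(i_k-1)=\sum_{j=0}^{d-1}(d-j)g_j$ that makes the series separate cleanly. Once that is in hand, the remainder is a routine evaluation of geometric series together with the reindexing $j\mapsto d-j$.
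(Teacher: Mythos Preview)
Your argument is correct: the gap substitution is indeed a bijection with exactly the constraints $g_k\ge 1$ for $k\in S$, the exponent identity $\sum_{k=1}^d(i_k-1)=\sum_{j=0}^{d-1}(d-j)g_j$ holds by the count $|\{k:j\le k-1\}|=d-j$, and the geometric series then collapse to the stated rational function. One minor point worth making explicit is that $0\notin S$ and $d\notin S$ (since $S\subseteq[d-1]$), so neither endpoint gap is constrained; you implicitly use this when you say the only constraints are $g_k\ge 1$ for $k\in S$.

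However, there is nothing to compare against in the paper: this lemma is stated there as a result of Gessel, with a citation to \cite[Lemma~5.2]{gesseldescent}, and no proof is given. The paper uses it as a black box to show that applying the principal specialization to Chow's expansion of $X_G$ recovers Theorem~\ref{thm:qchromaticallones}. Your direct computation is the standard way one proves such identities and is essentially the $P$-partition argument underlying Gessel's original proof.
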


With Theorem~\ref{thm:chowchromatic}, we can find $q$-chromatic generating functions by applying the principal specialization:
\begin{align*}
    \sum_{n \geq 0} \chi_G^\mathbbm{1}(q,n) z^n &= \sum_{n \geq 0} q^d \ps_n(X_G) z^n \\
    &= q^d \sum_{n \geq 0} \ps_n \left( \sum_{S \subseteq [d-1]} N_S F_{S,d}\right) z^n \\
    &= q^d \sum_{S \subseteq [d-1]} N_S \sum_{n \geq 0} \ps_n(F_{S,d}) z^n \\
    &=  q^d \sum_{S \subseteq [d-1]} N_S \frac{z^{|S|+1} q^{\sum_{i \in S} d-i} }{(1-z)(1-qz) \cdots (1-q^dz)} \\
    &= \frac{\sum_{\pi \in S_d} q^{d+\sum_{i \in \asc_G(\pi)} d-i} z^{\ascnum_G(\pi)+1}}{(1-z)(1-qz) \cdots (1-q^dz)}
\end{align*}
where the last equality holds by definition of $N_S$. 
This is exactly the expression in Theorem~\ref{thm:qchromaticallones}.


\section{$G$-statistics and acyclic orientations}\label{sec:gstatsandorientations}

The $q$-chromatic polynomial was explored by Bajo et al.\ via the $q$-Ehrhart theory of order polytopes,
analogous to the non-$q$ setting.
This relates to Stanley's theory of $P$-partitions used by Chow to prove Theorem~\ref{thm:chowchromatic}.
We will see that these approaches can be unified via $G$-statistics.

Given a poset $Q = ([d],\preceq)$, its \Def{order polytope} is
\[
    \order{Q} = \left\{(x_1,\ldots,x_d) \in [0,1]^d : x_i \leq x_j \text{ if } i \preceq j \right\}.
\]
Let $\lambda \in \mathbb{Z}^d$ be some linear form and let $P \subseteq \mathbb{R}^d$ be a lattice polytope,
Chapoton~\cite{chapoton} defines
\[
    \ehr_P^\lambda(q,n) := \sum_{m \in nP \cap \mathbb{Z}^d} q^{\lambda \cdot m}.
\]
Bajo et al.\ use the following key lemma relating these $q$-Ehrhart polynomials of (open) order polytopes to the
$q$-chromatic polynomial~\cite[Lemma~3]{qchromatic}; all notation used in the sum is defined in the next subsection.
\begin{lemma}[Bajo et al]\label{lem:qchromaticqehrhart}
    Let $G$ be a graph on $[d]$ and let $\lambda \in \mathbb{Z}^d$ be a linear form. Then 
    \[
        \chi^\lambda_G(q,n) = \sum_{\rho \in \acyclic{G}} \ehr^\lambda_{\order{G_\rho}^\circ}(q,n+1) \, .
    \]
\end{lemma}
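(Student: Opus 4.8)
The plan is to run Stanley's classical correspondence between proper colorings and acyclic orientations while tracking the $q$-weight $q^{\lambda\cdot c}$: partition the proper $n$-colorings of $G$ according to the acyclic orientation each one induces, and recognize each block as the set of interior lattice points of an $(n+1)$-dilated order polytope. Concretely, every proper coloring $c\colon[d]\to[n]$ induces an orientation $\rho_c$ of $G$ by directing each edge $ij\in E$ from its lower-colored endpoint toward its higher-colored one; since $c$ is proper this is well defined, and since the color values lie in the linearly ordered set $[n]$ no directed cycle can occur, so $\rho_c\in\acyclic{G}$. Grouping proper colorings by the value of $c\mapsto\rho_c$ gives the disjoint decomposition
\[
\{\,c\colon[d]\to[n]\text{ proper}\,\}=\biguplus_{\rho\in\acyclic{G}}\{\,c\colon[d]\to[n] : \rho_c=\rho\,\}.
\]

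Next I would fix $\rho\in\acyclic{G}$ and describe the block over $\rho$ explicitly. A coloring $c$ satisfies $\rho_c=\rho$ exactly when $c(i)<c(j)$ for every directed edge $i\to j$ of $\rho$, equivalently for every relation $i\prec j$ of the poset $G_\rho$ induced by $\rho$ (where $i\prec j$ means there is a directed $\rho$-path from $i$ to $j$). Viewing such a $c$ as the integer vector $(c(1),\dots,c(d))$, the constraints $1\le c(v)\le n$ for all $v$ together with $c(i)<c(j)$ whenever $i\prec_{G_\rho}j$ say precisely that $(c(1),\dots,c(d))\in(n+1)\,\order{G_\rho}^\circ\cap\ZZ^d$: the interior $\order{G_\rho}^\circ$ consists of the points of $(0,1)^d$ with $x_i<x_j$ whenever $i\prec_{G_\rho}j$, and dilating by $n+1$ turns $(0,1)$ into $\{1,\dots,n\}$ while keeping the strict inequalities. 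Thus passing to coordinates gives a weight-preserving bijection between the block over $\rho$ and $(n+1)\order{G_\rho}^\circ\cap\ZZ^d$.

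Finally I would assemble the generating functions. For each $\rho$,
\[
\ehr^\lambda_{\order{G_\rho}^\circ}(q,n+1)=\sum_{m\in(n+1)\order{G_\rho}^\circ\cap\ZZ^d}q^{\lambda\cdot m}=\sum_{\substack{c\colon[d]\to[n]\text{ proper}\\ \rho_c=\rho}}q^{\sum_{v\in[d]}\lambda_v c(v)},
\]
and summing over $\rho\in\acyclic{G}$, the partition above collapses this double sum to $\sum_c q^{\sum_v\lambda_v c(v)}=\chi^\lambda_G(q,n)$, which is the claim.

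The step I expect to demand the most care is the bookkeeping in the middle paragraph: correctly matching the dilation factor $n+1$ (rather than $n$) with the use of the \emph{open} order polytope, and verifying that the strictness of every defining inequality of $\order{G_\rho}^\circ$ corresponds exactly to "$c$ is proper and induces $\rho$", so that no coloring is double-counted across different orientations or left out. Once that identification is pinned down, the remaining steps are routine.
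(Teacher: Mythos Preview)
The paper does not prove this lemma; it is quoted verbatim as \cite[Lemma~3]{qchromatic} and used as a black box. Your argument is correct and is exactly the standard proof one would expect: partition proper $n$-colorings by the acyclic orientation they induce, and identify each block with the interior lattice points of the $(n{+}1)$-dilated order polytope of the induced poset. The bookkeeping you flag---matching the open polytope with the dilation factor $n{+}1$, and checking that ``$\rho_c=\rho$'' is equivalent to the strict order-polytope inequalities---is handled correctly.
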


To compute the Ehrhart polynomials in the above sum, one considers acyclic orientations of $G$ and linear extensions of induced posets related to those acyclic orientations. 
Thus, to connect $G$-statistics to the work of Bajo et al., we need to establish a direct connection between permutations and pairs of acyclic orientations and linear extensions, which we do next.

\subsection{The main bijection}


Let $P$ be a poset on $[d]$. A \Def{linear extension} $\tau:P \to [d]$ is an order preserving bijection. 
A \Def{natural labeling} of a poset $P$ is some fixed linear extension $\omega:P \to [d]$ and we denote the labeled poset as $(P,\omega)$.
When given a natural labeling, we can identify a linear extension $\tau$ as a permutation given by the word $[\omega(\tau^{-1}(1))\,  \omega(\tau^{-1}(2)) \, \cdots \, \omega(\tau^{-1}(d))]$ of the labels of $(P,\omega)$. 
The collection of all such permutations is the \Def{Jordan--H\"older set}, denoted $\jhset{P,\omega}$ or
$\jhset{P}$ if the labeling is clear from the context.
For a poset $P$ with a natural labeling $\omega$ and linear extension $\tau$, we say that the linear extension has a \Def{descent} at $i \in [d-1]$ if for the associated word $\omega \circ \tau^{-1}$ we have $\omega \circ \tau^{-1}(i) > \omega \circ \tau^{-1} (i+1)$.
Denote the set of all descents by $\des(\omega \circ \tau^{-1})$.

\begin{example}\label{ex:posettheory}
    Let $P$ be the poset given in Figure~\ref{fig:posettheory}, then $\omega:P \to [4]$ where $\omega(1)=1$, $\omega(2)=3$, $\omega(3)=4$, and $\omega(4)=2$ is a natural labeling.
    We can also see this in Figure~\ref{fig:posettheory} since the labeled poset respects the usual order $1 \leq 2 \leq 3 \leq 4$.
    Moreover $\tau:P \to [4]$ defined by
    \[
        \tau(1) = 2, \ \tau(2)=4, \ \tau(3)=3, \ \tau(4)=1
    \]
    is a linear extension because $i \leq_P j$ for $i,j \in P$ implies $\tau(i) \leq \tau(j)$. We can think of the image of $\tau$ as defining the positions of elements of $P$ for a linear order that respects the relations of $P$.

    To see the corresponding word in $\jhset{P,\omega}$ for $\tau$ we have
    \[
        [\omega(\tau^{-1}(1))\omega(\tau^{-1}(2))\omega(\tau^{-1}(3))\omega(\tau^{-1}(4))] = [\omega(4)
\omega(1) \omega(3) \omega(2)] = [2143] \, .
    \]
    Note that $\tau^{-1}$ can be seen as a permutation of elements of $P$, and applying $\omega$ ensures we have a word of the labels of $P$.
    We also have $\des([2143])=\{1,3\}$.
\end{example}

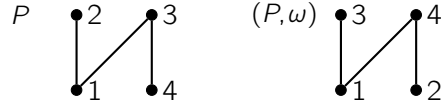
\begin{figure}
    \centering
    \begin{tikzpicture}
    \begin{scope}[xshift=0, yshift=0, scale=1]    
        \vertex[fill](a1) at (0,0) {};
        \vertex[fill](a2) at (0,1) {};
        \vertex[fill](a3) at (1,0) {};
        \vertex[fill](a4) at (1,1) {};
        
        \node[anchor=west] (b1) at (a1) {$1$};
        \node[anchor=west] (b2) at (a2) {$2$};
        \node[anchor=west] (b3) at (a3) {$4$};
        \node[anchor=west] (b4) at (a4) {$3$};
        \node[] (b5) at (-0.75,1) {\small $P$};

        \draw[thick] (a1) --  (a2);
        \draw[thick] (a1) -- (a4);
        \draw[thick] (a3) -- (a4);
      
    \end{scope}

    \begin{scope}[xshift=100, yshift=0, scale=1]    
        \vertex[fill](a1) at (0,0) {};
        \vertex[fill](a2) at (0,1) {};
        \vertex[fill](a3) at (1,0) {};
        \vertex[fill](a4) at (1,1) {};
        
        \node[anchor=west] (b1) at (a1) {$1$};
        \node[anchor=west] (b2) at (a2) {$3$};
        \node[anchor=west] (b3) at (a3) {$2$};
        \node[anchor=west] (b4) at (a4) {$4$};
        \node[] (b5) at (-0.75,1) {\small $(P,\omega)$};

        \draw[thick] (a1) --  (a2);
        \draw[thick] (a1) -- (a4);
        \draw[thick] (a3) -- (a4);
      
    \end{scope}
\end{tikzpicture}
    \caption{On the left, a poset $P$ with elements $[4]$. On the right, a natural labeling $\omega$ of $P$ given in Example~\ref{ex:posettheory}.}
    \label{fig:posettheory}
\end{figure}

Let $\acyclic{G}$ denote the set of acyclic orientations of a given graph $G$. For $\rho \in \acyclic{G}$, let
$G_\rho$ denote the \Def{induced poset} on elements $[d]$, were $i \to j$ in $\rho$ gives the relation $i \leq
j$ in $G_\rho$, followed by taking the transitive closure to produce a poset.
For a graph $G$ and $\rho \in \acyclic{G}$, let $\{v_1,\ldots,v_{i_1}\}$ be all minimal elements of $G_\rho$,
with $v_1 < \cdots < v_{i_1}$; next let $\{v_{i_1+1},\ldots,v_{i_2} \}$ be all the minimal elements after
removing $\{v_1,\ldots,v_{i_1}\}$ from $G_\rho$, with $v_{i_1+1} < \cdots < v_{i_2}$, and repeat until we have listed all elements of $G_\rho$. We define the \Def{rank labeling} $\eta_\rho:G_\rho \to [d]$ by $\eta_\rho(v_j)=j$.

\begin{figure}
    \centering
    \begin{tikzpicture}
    \begin{scope}[xshift=0, yshift=0, scale=1]    
        \vertex[fill](a1) at (0,0) {};
        \vertex[fill](a2) at (0,1) {};
        \vertex[fill](a3) at (0.95105,0.30901) {};
        \vertex[fill] (a4) at (0.58778,-0.80901) {};
        \vertex[fill] (a5) at (-0.58778,-0.80901) {};
        \vertex[fill](a6) at (-0.95105,0.30901) {};
        
        \node[anchor=north] (b1) at (a1) {$1$};
        \node[anchor=north west] (b2) at (a2) {$6$};
        \node[anchor=north] (b3) at (a3) {$5$};
        \node[anchor=north] (b4) at (a4) {$4$};
        \node[anchor=north] (b5) at (a5) {$3$};
        \node[anchor=north] (b6) at (a6) {$2$};
        \node (c1) at (0,-1.5) {$G$};

        \draw[thick] (a1) --  (a2);
        \draw[thick] (a1) --  (a3);
        \draw[thick] (a1) --  (a4);
        \draw[thick] (a1) --  (a5);
        \draw[thick] (a1) --  (a6);
      
    \end{scope}

    \begin{scope}[xshift=90, yshift=0, scale=1]    
        \vertex[fill](a1) at (0,0) {};
        \vertex[fill](a2) at (0,1) {};
        \vertex[fill](a3) at (0.95105,0.30901) {};
        \vertex[fill] (a4) at (0.58778,-0.80901) {};
        \vertex[fill] (a5) at (-0.58778,-0.80901) {};
        \vertex[fill](a6) at (-0.95105,0.30901) {};
        
        \node[anchor=north] (b1) at (a1) {$1$};
        \node[anchor=north west] (b2) at (a2) {$6$};
        \node[anchor=north] (b3) at (a3) {$5$};
        \node[anchor=north] (b4) at (a4) {$4$};
        \node[anchor=north] (b5) at (a5) {$3$};
        \node[anchor=north] (b6) at (a6) {$2$};
        \node (c1) at (0,-1.5) {$\rho$};

        \draw[thick,mid arrow={stealth[reversed]}] (a1) --  (a2);
        \draw[thick,mid arrow={stealth[reversed]}] (a1) --  (a3);
        \draw[thick,mid arrow={stealth}] (a1)--(a4);
        \draw[thick,mid arrow={stealth}] (a1)--(a5);
        \draw[thick,mid arrow={stealth}] (a1)--(a6);
      
    \end{scope}

    \begin{scope}[xshift=175, yshift=0, scale=1]    
        \vertex[fill] (a1) at (-0.5,-0.75) {};
        \vertex[fill] (a2) at (0.5,-0.75) {};
        \vertex[fill](a3) at (0,0) {};
        \vertex[fill](a4) at (0.75,0.75) {};
        \vertex[fill](a5) at (0,0.75) {};
        \vertex[fill](a6) at (-0.75,0.75) {};

        \node[anchor=west] (b1) at (a1) {$6$};
        \node[anchor=west] (b2) at (a2) {$5$};
        \node[anchor=west] (b3) at (a3) {$1$};
        \node[anchor=west] (b4) at (a4) {$2$};
        \node[anchor=west] (b5) at (a5) {$3$};
        \node[anchor=west] (b6) at (a6) {$4$};
        \node (c1) at (0,-1.5) {$G_\rho$};

        \draw[thick] (a1) --  (a3);
        \draw[thick] (a2) -- (a3);
        \draw[thick] (a3) -- (a4);
        \draw[thick] (a3) -- (a5);
        \draw[thick] (a3) -- (a6);
      
    \end{scope}

    \begin{scope}[xshift=255, yshift=0, scale=1]    
        \vertex[fill] (a1) at (-0.5,-0.75) {};
        \vertex[fill] (a2) at (0.5,-0.75) {};
        \vertex[fill](a3) at (0,0) {};
        \vertex[fill](a4) at (0.75,0.75) {};
        \vertex[fill](a5) at (0,0.75) {};
        \vertex[fill](a6) at (-0.75,0.75) {};

        \node[anchor=west] (b1) at (a1) {$2$};
        \node[anchor=west] (b2) at (a2) {$1$};
        \node[anchor=west] (b3) at (a3) {$3$};
        \node[anchor=west] (b4) at (a4) {$4$};
        \node[anchor=west] (b5) at (a5) {$5$};
        \node[anchor=west] (b6) at (a6) {$6$};
        \node (c1) at (0,-1.5) {$(G_\rho,\eta_\rho)$};

        \draw[thick] (a1) --  (a3);
        \draw[thick] (a2) -- (a3);
        \draw[thick] (a3) -- (a4);
        \draw[thick] (a3) -- (a5);
        \draw[thick] (a3) -- (a6);
      
    \end{scope}

\end{tikzpicture}
    \caption{A graph, an acyclic orientation, the corresponding induced poset, and the induced poset with the
rank labeling.}
    \label{fig:inducedposet}
\end{figure}
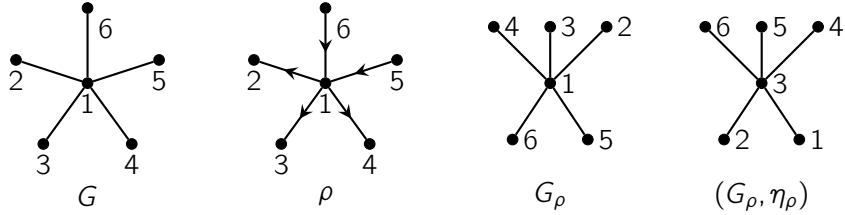

\begin{example}
    Figure~\ref{fig:inducedposet} gives an example of an induced poset for the star graph on $5$ vertices for
some acyclic orientation. For the induced poset, we have the rank labeling $\eta_\rho$ where 
    \[
        \eta_\rho(1)=3, \ \eta_\rho(2)=4, \ \eta_\rho(3)=5, \ \eta_\rho(4)=6, \ \eta_\rho(5)=1, \ \eta_\rho(6)=2.
    \]
\end{example}

The following lemma will play a key role in our arguments in this section.
\begin{lemma}\label{lem:edgerank}
    Let $G$ be a graph on $[d]$ and $\pi \in S_d$. If $\pi(i) \pi(j) \in E(G)$ and $i <j$, then $\rk(\pi(i))<\rk(\pi(j))$.
\end{lemma}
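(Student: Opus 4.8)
The plan is to argue directly from the definition of rank by extending a rank-witnessing sequence of positions. First I would unwind the definition: $\rk(\pi(i))$ is the largest $r \in \mathbb{Z}_{>0}$ for which there exist positions $i_1 < i_2 < \cdots < i_r = i$ with $\{\pi(i_\ell),\pi(i_{\ell+1})\} \in E$ for all $1 \le \ell < r$. So I would fix one such sequence of maximal length $r := \rk(\pi(i))$, whose last position is $i_r = i$.

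Next I would append the position $j$ to the end of this sequence. Since $i < j$ by hypothesis, the sequence $i_1 < i_2 < \cdots < i_r = i < j$ is still strictly increasing, and the only new consecutive pair it introduces is $\{\pi(i_r),\pi(j)\} = \{\pi(i),\pi(j)\}$, which lies in $E(G)$ by assumption. Hence this extended sequence, which has length $r+1$ and ends at position $j$, witnesses $\rk(\pi(j)) \ge r+1 = \rk(\pi(i)) + 1 > \rk(\pi(i))$, which is exactly the claim.

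I do not expect a genuine obstacle here: the content is precisely the monotonicity observation already invoked in the proof of Proposition~\ref{prop:graphicaldecomposition} (``a longest path in the definition of rank of $\pi(i_2)$ must be at least as long as the longest path defining the rank of $\pi(i_1)$ with the edge $v_1 v_2 \in E$''), now isolated as a standalone lemma for later use. If one prefers to phrase the argument in Steingr\'imsson's longest-path language rather than in terms of position sequences, the same one-line extension works verbatim: append $\pi(j)$ to a longest path in $G$ realizing $\rk(\pi(i))$.
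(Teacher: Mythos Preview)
Your proposal is correct and essentially identical to the paper's proof: both fix a position sequence $i_1 < \cdots < i_r = i$ witnessing $\rk(\pi(i)) = r$, append the position $j$ using $i < j$ and $\{\pi(i),\pi(j)\} \in E$, and conclude $\rk(\pi(j)) \ge r+1$.
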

\begin{proof}
    When computing the rank $\rk(\pi(j))$, consider a path defining $\rk(\pi(i))$ along with the edge $\pi(i)\pi(j)$. 
    In other words, we know for $r := \rk(\pi(i))$ there exist positions $i_1 < i_2 < \dots < i_r = i$ such that $\{\pi(i_j), \pi(i_{j+1}) \} \in E$ for $1 \le j < r$.
    Thus $\pi(i_1) \pi(i_2) \ldots \pi(i) \pi(j)$ is a possible path satisfying the definition in rank for $\pi(j)$. 
    Since $\rk(\pi(j))$ is defined from the longest such path satisfying this condition, 
    \[
        \rk(\pi(j)) \geq \rk(\pi(i))+1 > \rk(\pi(i)) \, . \qedhere 
    \] 
\end{proof}

Let $G$ be a graph on $[d]$. 
For $\pi \in S_d$ we define the \Def{$\pi$-rank acyclic orientation}, denoted $\rho_\pi$, to be the orientation where 
\[
    \rho_\pi \text{ orients the edge } ij \in E(G) \text{ by } i \to j \text{ whenever } \rk(i) < \rk(j).
\]
To show $\rho_\pi$ is well defined, consider two adjacent vertices $v_1,v_2$. 
Then there exist $i_1,i_2$ such that $\pi(i_1) = v_1, \pi(i_2) = v_2$; without loss of generality say $i_1 < i_2$. 
Then $\rk(v_1) < \rk(v_2)$ by Lemma~\ref{lem:edgerank}.
To show that this orientation is acyclic, note that on a directed path we must have ranks be strictly increasing and so having a cycle would contradict these strict inequalities of ranks.
Given a natural labeling $\omega$ of $G_{\rho_\pi}$, we also define 
\[
    \sigma_\pi := \omega \circ \pi = \omega \circ (\pi^{-1})^{-1}
\]
which we can identify as the word $[\omega(\pi(1)) \omega(\pi(2)) \cdots \omega(\pi(d))]$.

\begin{example}
    Consider the bowtie graph $G$ from Figure~\ref{fig:gsequence} and $\pi = [31254]$. Recall from Example~\ref{ex:gsequence} that the rank function $\rk$ gives $\rk(3) = 1$, $\rk(1)=1$, $\rk(2)=2$, $\rk(5)=3$, and $\rk(4)=4$. The $\pi$-rank acyclic orientation is shown in Figure~\ref{fig:pirankposet}. Moreover note that the rank labeling is a natural labeling for $G_{\rho_\pi}$ and so for this labeling $\sigma_\pi = \eta_\rho \circ \pi$ as a word is $[21354]$.
\end{example}

\begin{figure}
    \centering
    \begin{tikzpicture}
    \begin{scope}[xshift=0, yshift=0, scale=1]    
        \vertex[fill](a1) at (0,0) {};
        \vertex[fill](a2) at (1,0.5) {};
        \vertex[fill](a3) at (1,-0.5) {};
        \vertex[fill](a4) at (-1,0.5) {};
        \vertex[fill](a5) at (-1,-0.5) {};

        \node[anchor=south] (b1) at (a1) {$2$};
        \node[anchor=south] (b2) at (a2) {$1$};
        \node[anchor=north] (b3) at (a3) {$4$};
        \node[anchor=south] (b4) at (a4) {$5$};
        \node[anchor=north] (b5) at (a5) {$3$};
        \node (c1) at (0,-1.25) {$\rho_\pi$};

        \draw[thick, mid arrow={stealth[reversed]}] (a1) -- (a2);
        \draw[thick, mid arrow={stealth}] (a1) -- (a3);
        \draw[thick, mid arrow={stealth[reversed]}] (a1) -- (a5);
        \draw[thick, mid arrow={stealth}] (a1) -- (a4);
        \draw[thick, mid arrow={stealth}] (a2) -- (a3);
        \draw[thick, mid arrow={stealth[reversed]}] (a4) -- (a5);
    \end{scope}

    \begin{scope}[xshift=90, yshift=0, scale=1]    
        \vertex[fill](a1) at (0.666,-0.666) {};
        \vertex[fill](a2) at (-0.666,-0.666) {};
        \vertex[fill](a3) at (0,0) {};
        \vertex[fill](a4) at (0.666,0.666) {};
        \vertex[fill](a5) at (-0.666,0.666) {};

        \node[anchor=west] (b1) at (a1) {$1$};
        \node[anchor=west] (b2) at (a2) {$3$};
        \node[anchor=west] (b3) at (a3) {$2$};
        \node[anchor=west] (b4) at (a4) {$4$};
        \node[anchor=west] (b5) at (a5) {$5$};
        \node (c1) at (0,-1.25) {$G_{\rho_\pi}$};
        
        \draw[thick] (a1) -- (a3);
        \draw[thick] (a2) -- (a3);
        \draw[thick] (a3) -- (a4);
        \draw[thick] (a3) -- (a5);
    \end{scope}

    \begin{scope}[xshift=175, yshift=0, scale=1]    
        \vertex[fill](a1) at (0.666,-0.666) {};
        \vertex[fill](a2) at (-0.666,-0.666) {};
        \vertex[fill](a3) at (0,0) {};
        \vertex[fill](a4) at (0.666,0.666) {};
        \vertex[fill](a5) at (-0.666,0.666) {};

        \node[anchor=west] (b1) at (a1) {$1$};
        \node[anchor=west] (b2) at (a2) {$2$};
        \node[anchor=west] (b3) at (a3) {$3$};
        \node[anchor=west] (b4) at (a4) {$4$};
        \node[anchor=west] (b5) at (a5) {$5$};
        \node (c1) at (0,-1.25) {$(G_{\rho_\pi},\eta_{\rho_\pi})$};
        
        \draw[thick] (a1) -- (a3);
        \draw[thick] (a2) -- (a3);
        \draw[thick] (a3) -- (a4);
        \draw[thick] (a3) -- (a5);
    \end{scope}

\end{tikzpicture}
    \caption{The $\pi$-rank acyclic orientation $\rho_\pi$ of the bowtie graph in Figure~\ref{fig:gsequence} for $\pi=[31254]$, the induced poset $G_{\rho_\pi}$, and the induced poset with the rank labeling $(G_{\rho_\pi},\eta_{\rho_\pi})$.}
    \label{fig:pirankposet}
\end{figure}
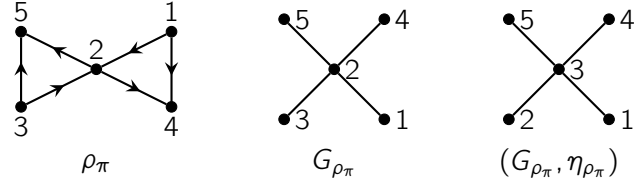

Theorem~\ref{thm:bijectiongstats} below exhibits that the rank functions for $G$-statistics encode acyclic orientations, i.e., the induced posets of $G$, by showing that each permutation in $S_d$ corresponds uniquely to a linear extension of an induced poset.
Our proof below closely mirrors a proof due to Chow \cite{chow1999} arising in the work discussed previously regarding quasisymmetric functions and the chromatic symmetric function.
Specifically, Chow used Stanley's \cite{stanleychromatic} theory of $P$-partition functions to establish a bijection between permutations and pairs of acyclic orientations and linear extensions, where each acyclic orientation is given an order-reversing labeling of the induced poset.
He also gave a labeling of these induced posets to show that the usual descent set of a permutation is equal to the descent set of the corresponding linear extension under this bijection and labeling.
Since he used $P$-partitions, i.e., order-reversing maps from a poset $P$ to $\mathbb{R}_{\geq 0}$, his results
have direct analogues to order-preserving maps of posets, which we use for order polytopes in Theorem~\ref{thm:bijectiongstats}. 

Further, Chow assigned the acyclic orientation in a manner that does not involve rank functions, though the resulting orientations are in fact equivalent.
Translating Chow's work to our context of order-preserving maps, he considers the following: for $G$ a graph on $[d]$, a permutation $\pi \in S_d$, and an edge $\pi(i)\pi(j) \in E(G)$, orient $\pi(i) \to \pi(j)$ for $\rho_\pi$ if $i <j$.
The resulting acyclic orientation is identical to our $\pi$-rank acyclic orientation defined above since for $G$ a graph on $[d]$, $\pi \in S_d$, and $\pi(i)\pi(j) \in E(G)$, we have $\rk(\pi(i)) < \rk(\pi(j))$ if and only if $i <j$. 
The backwards direction of this equivalence is Lemma~\ref{lem:edgerank}, while the forward direction can also be proven with Lemma~\ref{lem:edgerank} by contrapositive. 

Thus, while the structure of the proof of Theorem~\ref{thm:bijectiongstats} closely mirrors Chow's work, to set
this in the context considered in this paper, we provide a complete proof of the bijection and apply it to our setting of order-preserving maps.

\begin{theorem}\label{thm:bijectiongstats}
    For a graph $G$ on $[d]$, for each $\rho \in \acyclic{G}$ let $\omega_\rho$ be  a natural labeling of
$G_\rho$. Then the map
    \[
        \varphi: S_d \to\{(\rho,\sigma) \mid \rho \in \mathcal{A}(G), \sigma \in \mathcal{L}(G_\rho,\omega_\rho) \}
    \]
    defined by
    \[
    \pi \mapsto (\rho_\pi, \sigma_\pi)
    \]
is a bijection.
    Moreover, if we use the rank labelings for our induced posets with $\sigma_\pi = \eta_{\rho_\pi} \circ \pi$, then $\des_G(\pi) = \des(\sigma_\pi)$ where $\des$ is the usual descent statistic.
\end{theorem}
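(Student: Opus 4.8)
The plan is to prove that $\varphi$ is a bijection by exhibiting an explicit two-sided inverse, and then to read the descent identity off the combinatorics of the rank labeling. Throughout I will use the equivalence recorded just before the statement: for an edge $\pi(i)\pi(j)\in E(G)$ one has $\rk(\pi(i))<\rk(\pi(j))$ if and only if $i<j$ (its backward direction is Lemma~\ref{lem:edgerank}, the forward direction follows by contraposition). Set $\psi(\rho,\sigma):=\omega_\rho^{-1}\circ\sigma$ for $\rho\in\acyclic{G}$ and $\sigma\in\jhset{G_\rho,\omega_\rho}$. First I would check that $\varphi$ actually lands in the stated codomain. That $\rho_\pi$ is a well-defined acyclic orientation is already verified in the text, so it remains to see that the word $\sigma_\pi=[\omega_{\rho_\pi}(\pi(1))\cdots\omega_{\rho_\pi}(\pi(d))]$ lies in $\jhset{G_{\rho_\pi},\omega_{\rho_\pi}}$. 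By definition of the Jordan--H\"older set this amounts to saying that $\pi^{-1}$, viewed as a map $[d]\to[d]$, is a linear extension of the poset $G_{\rho_\pi}$; and since $G_{\rho_\pi}$ is the transitive closure of $\rho_\pi$, it suffices to show $\pi^{-1}(u)<\pi^{-1}(v)$ whenever $u\to v$ in $\rho_\pi$. But $u\to v$ in $\rho_\pi$ means $\rk(u)<\rk(v)$, which the equivalence turns into $\pi^{-1}(u)<\pi^{-1}(v)$, and transitivity of $<$ then handles all comparable pairs.

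Next I would verify that $\psi$ inverts $\varphi$ on both sides. The identity $\psi(\varphi(\pi))=\pi$ is immediate from $\sigma_\pi=\omega_{\rho_\pi}\circ\pi$. For $\varphi(\psi(\rho,\sigma))=(\rho,\sigma)$, pick the linear extension $\tau$ of $G_\rho$ whose Jordan--H\"older word is $\sigma$, so that $\sigma(k)=\omega_\rho(\tau^{-1}(k))$ for all $k$ and hence $\pi:=\psi(\rho,\sigma)=\tau^{-1}$. For any edge $uv\in E(G)$ oriented $u\to v$ by $\rho$ we have $u\preceq_{G_\rho}v$, so $\pi^{-1}(u)=\tau(u)<\tau(v)=\pi^{-1}(v)$; by the equivalence this forces $\rk(u)<\rk(v)$ in the rank function of $\pi$, i.e.\ $\rho_\pi$ also orients $uv$ by $u\to v$. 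As this holds for every edge, $\rho_\pi=\rho$, and then $\sigma_\pi=\omega_{\rho_\pi}\circ\pi=\omega_\rho\circ\omega_\rho^{-1}\circ\sigma=\sigma$. This establishes the bijection.

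For the descent statement I would first note that the rank labeling $\eta_\rho$ is indeed a natural labeling of $G_\rho$: if $u\prec_{G_\rho}v$ then $u$ is deleted strictly before $v$ in the iterated removal of minimal elements, so $\eta_\rho(u)<\eta_\rho(v)$; thus the bijection applies with $\omega_\rho=\eta_\rho$ and $\sigma_\pi=\eta_{\rho_\pi}\circ\pi$. The crux is to identify the deletion level of a vertex $v$ in $G_{\rho_\pi}$ with $\rk(v)$. Using the equivalence once more, a witness for $\rk(v)\ge r$ --- a sequence of vertices ending at $v$ with consecutive edges in $G$ and strictly increasing positions under $\pi^{-1}$ --- is precisely a directed path of $\rho_\pi$ ending at $v$ with $r$ vertices; since $\rho_\pi$ is acyclic, the longest such path has exactly $\rk(v)$ vertices. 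On the other hand, by the standard property of iterated deletion of minimal elements, the deletion level of $v$ equals the length of a longest chain of $G_{\rho_\pi}$ with top $v$, which in turn equals the length of a longest directed path of $\rho_\pi$ ending at $v$ (directed paths are chains, and a chain refines to a directed path by inserting a realizing $\rho_\pi$-path for each covering step). So the deletion level of $v$ is $\rk(v)$, and by the construction of the rank labeling (smaller deletion level receives a smaller label, ties within a level broken by the vertex label) we obtain, for $v=\pi(i)$ and $w=\pi(i+1)$, that $\eta_{\rho_\pi}(v)>\eta_{\rho_\pi}(w)$ holds exactly when $\rk(v)>\rk(w)$, or $\rk(v)=\rk(w)$ and $v>w$ --- which is verbatim the defining condition for $i\in\des_G(\pi)$. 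Hence $\des(\sigma_\pi)=\des_G(\pi)$.

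The step I expect to be the main obstacle is the identification of the deletion level of $v$ with $\rk(v)$, together with the bookkeeping needed to keep the ``permutation as a map'' and ``permutation as a word'' viewpoints consistent --- in particular the identity $\pi=\tau^{-1}$ in the surjectivity argument, and the translation of ``$\sigma_\pi\in\jhset{G_{\rho_\pi},\omega_{\rho_\pi}}$'' into ``$\pi^{-1}$ is a linear extension of $G_{\rho_\pi}$''. None of the underlying combinatorics is deep, but the directions of the inequalities relating $\rk$, the position order under $\pi$, the orientation $\rho_\pi$, and the labeling $\eta_{\rho_\pi}$ must all be tracked carefully to avoid reversing an implication.
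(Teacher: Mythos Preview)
Your proposal is correct and follows essentially the same approach as the paper: the same inverse map $\psi(\rho,\sigma)=\tau^{-1}$ (which you write as $\omega_\rho^{-1}\circ\sigma$), the same use of Lemma~\ref{lem:edgerank} and its converse to check that $\pi^{-1}$ is a linear extension and that $\rho_{\tau^{-1}}=\rho$, and the same reading of the descent identity from the rank labeling. Your argument is in fact slightly more explicit than the paper's on one point: you spell out why the deletion level of $v$ in the iterated-minima construction equals $\rk(v)$ (via the longest directed $\rho_\pi$-path), whereas the paper simply asserts that $\eta_\rho(i)<\eta_\rho(j)$ iff $\rk(i)<\rk(j)$ or $\rk(i)=\rk(j)$ and $i<j$ ``by construction of our labeling.''
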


There are two key observations to highlight in the theorem above.
First, while multiple permutations might induce the same permutation-rank acyclic orientation $\rho$, hence the same induced poset $G_\rho$, the choice of $\omega_\rho$ depends only on $\rho$ and is independent of which permutation is being considered.
As a result of this, the second observation is that each permutation which induces $\rho$ via its rank function will yield a unique linear extension of $G_\rho$ with respect to $\omega_\rho$; it is this pair of an acyclic orientation and a linear extension that yields the bijective correspondence with permutations.

\begin{proof}
    We will first prove bijectivity and then analyze the descent sets in the case where we use the rank labelings on each induced poset. 
    Suppose $\pi \in S_d$. Recall $\rho_\pi$ is an acyclic orientation of the edges $i \to j$ whenever $\rk(i) < \rk(j)$. Also recall $\sigma_\pi := \omega_{\rho_\pi} \circ \pi$, where $\omega_{\rho_\pi}$ is the same for all $\pi$ and $\pi'$ such that $\rho_\pi=\rho_{\pi'}$.
    
    We claim that $\sigma_\pi \in \mathcal{L}(G_{\rho_\pi})$. 
    To prove this, we have that $\sigma_\pi = \omega_{\rho_\pi} \circ (\pi^{-1})^{-1}$, where $\pi^{-1}$ is defined as $\pi$ is a bijection.
    We can consider the domain of $\pi^{-1}$ to be the elements of the induced poset $G_{\rho_\pi}$ because these elements are given by the set $[d]$.
    This means we want to show $\pi^{-1}:G_{\rho_\pi} \to [d]$ is a linear extension, i.e., an order preserving bijection from $G_{\rho_\pi}$ to $[d]$.
    In other words, for the elements, not the labeling, of $G_{\rho_\pi}$ we want to show that $\pi^{-1}$ respects the relations of this poset.
    We know $\pi^{-1}:G_{\rho_\pi} \to [d]$ is a bijection, so all that is left to show is that it is order preserving.
    To do so, suppose $i \prec j$ in $G_{\rho_\pi}$ is a cover relation of elements; hence it must form an edge in $G$ and $i \neq j$. 
    Denote $v_i := \pi^{-1}(i)$ and $v_j:=\pi^{-1}(j) \in [d]$. 
    For contradiction suppose $v_j < v_i$. By Lemma~\ref{lem:edgerank},
    \[
        \rk(j)= \rk(\pi(v_j))< \rk(\pi(v_i)) = \rk(i).
    \]
    This inequality contradicts that $i \preceq j$ in $G_{\rho_\pi}$ since this relation implies $\rk(i) \leq \rk(j)$. 
    So we must have $v_i \leq v_j$, in other words $\pi^{-1}$ preserves cover relations. 
    This means $\pi^{-1}$ is order preserving, hence by definition $\sigma_\pi = \omega_\rho \circ \pi \in \mathcal{L}(G_{\rho_\pi})$.

    To define the inverse map $\psi:\{(\rho,\sigma) \mid \rho \in \mathcal{A}(G), \sigma \in \mathcal{L}(G_\rho)\} \to S_d$ 
    consider $(\rho,\sigma)$ such that $\rho \in \acyclic{G}$ and $\sigma \in \jhset{G_\rho}$. 
    By definition, $\sigma = \omega_\rho \circ \tau^{-1}$ for some order preserving map $\tau:G_\rho \to [d]$. We send $\psi(\rho,\sigma)= \tau^{-1}$. 
    We will now verify that $\psi$ is the inverse of $\varphi$. First, $\psi \circ \varphi (\pi) = \psi(\rho_\pi,\sigma_\pi) = \pi$ since $\sigma_\pi = \omega_\rho \circ \pi$. 
    We next want to show $\varphi \circ \psi (\rho,\sigma) = (\rho,\sigma)$ where $\sigma = \omega_\rho \circ \tau^{-1}$.
    By definition of $\psi$, we have $\varphi(\psi(\rho,\sigma))=\varphi(\tau^{-1})$. 
    For $\varphi(\tau^{-1})$, we have $\sigma_{\tau^{-1}} = \omega_\rho \circ \tau^{-1} = \sigma$.
    We claim $\rho_{\tau^{-1}} = \rho$. 
    By definition of $\rho_{\tau^{-1}}$, we have $i \to j$ in $\rho_{\tau^{-1}}$ whenever $\rk[\tau^{-1}](i) < \rk[\tau^{-1}](j)$
    so it suffices to show that for an edge $ij \in E(G)$ we have $i \leq j$ in $G_\rho$ if and only if  $\rk[\tau^{-1}](i) < \rk[\tau^{-1}](j)$.
    For the forward direction, suppose $i \preceq j \in G_\rho$, then $\tau(i) \leq \tau(j)$ since $\tau$ is order preserving.
    Hence
    \[
        \rk[\tau^{-1}](i) = \rk[\tau^{-1}](\tau^{-1} (\tau(i))) < \rk[\tau^{-1}](\tau^{-1} (\tau(j))) = \rk[\tau^{-1}](j)
    \]
    by Lemma~\ref{lem:edgerank}.
    For the backward direction, suppose $\rk[\tau^{-1}](i) < \rk[\tau^{-1}](j)$ then we claim $i \preceq j$ in
    $G_\rho$, since otherwise $\tau(i) > \tau(j)$ and by the same argument as the forward direction
    $\rk[\tau^{-1}](i) > \rk[\tau^{-1}](j)$, giving a contradiction.
    This shows $\rho_{\tau^{-1}} = \rho$ and proves that $\varphi$ is a bijection.

    Next, when for each $\rho \in \acyclic{G}$ we have that $\eta_\rho$ is the rank labeling, we will show $\des_G(\pi) = \des(\sigma_\pi)$ for every permutation $\pi \in S_d$. 
    For a permutation $\pi \in S_d$, we know the rank labeling of $G_{\rho_\pi}$ does the following. 
    Say $\{v_1, \ldots, v_{i_1}\}$ are the vertices of $G$ minimal in the poset, where $v_1< \cdots < v_{i_1}$,
    $\{v_{i_1 + 1}, \ldots, v_{i_2}\}$ are all vertices of $G$ minimal in $G\setminus\{v_1, \ldots, v_{i_1}\}$, where $v_{i_1 + 1} < \cdots < v_{i_2}$, and so on, then set $\eta_\rho(v_j) = j$. 
    We claim $\eta_{\rho_\pi}$ is a natural labelling since $i \preceq j$ in $G_{\rho_\pi}$ implies $\rk(i) \leq \rk(j)$ which must mean $\eta_\rho(i) \leq \eta_\rho(j)$ by construction of our labeling. 
    Additionally, any permutation with the same rank function $\rk$ gives the same acyclic orientation and so $\eta_{\rho_\pi}$ depends only on this acyclic orientation.
    We now show the descent statistics are the same under $\eta_{\rho_\pi}$. 
    By construction of our labeling, $\eta_\rho(i) < \eta_\rho(j)$  if and only if
    \[
        \rk(i) < \rk(j) \quad \text{or} \quad \rk(i) = \rk(j) \text{ and } i < j \, .
    \]
    Thus, $i \in \des_G(\pi)$ if and only if $\rk(\pi(i)) > \rk(\pi(i+1))$ or $\rk(\pi(i)) = \rk(\pi(i+1)) \text{ and } \pi(i) > \pi(i+1)$. 
    By our prior observation, this condition is equivalent to $\eta_\rho(\pi(i)) > \eta_\rho(\pi(i+1))$ which holds if and only if $i \in \des( \eta_\rho \circ \pi) = \des(\sigma_\pi)$.
\end{proof}
 
Viewing the rank function as defining an acyclic orientation motivates the definitions for $G$-statistics. 
Using the rank function of a permutation to produce a poset makes it also easier to see that the rank labeling is a natural labeling. 
In addition, Theorem~\ref{thm:bijectiongstats} shows that for any $\rho \in \acyclic{G}$, the rank labeling of $G_\rho$ ensures that the usual descent set of a linear extension is equal to the $G$-descent set of the corresponding permutation. 

We can apply the same arguments by Bajo et al.\ for order polytopes to give the generating function of the $q$-chromatic polynomial using $q$-Ehrhart theory and linear extensions of induced posets.
Bajo et al.\ give the generating function for $G$-partitions using linear extensions of induced posets.
We can apply the connection they found in Lemma~\ref{lem:qchromaticqehrhart} to obtain the generating function for the $q$-chromatic polynomial.
We will only sketch the details for the $q$-chromatic polynomial with linear form $\mathbbm{1} \in \mathbb{Z}^d$, as this case has a nice numerator polynomial in the rational expression of the generating function.
In Section~\ref{sec:symmetry}, we will see more properties of this polynomial.

\begin{theorem}\label{thm:qchromaticgenfunctiondoublesum}
    Let $G = ([d],E)$ be a graph, and for $\rho \in \acyclic{G}$, let $\omega_\rho$ be any natural labeling of $G_\rho$. Then
    \[
        \frac{\sum_{(\rho,\sigma)} q^{d+\sum_{j \in \asc(\sigma)} d-j} z^{\ascnum(\sigma)+1}}{(1-z)(1-qz) \cdots (1-q^d z)} = \frac{\sum_{\pi \in S_d} q^{d+\sum_{j \in \asc_G(\pi)} d-j} z^{\ascnum_G(\pi)+1}}{(1-z)(1-qz) \cdots (1-q^d z)}
    \]
    where $(\rho,\sigma)$ ranges over $\rho \in \acyclic{G}$ and $\sigma \in \jhset{G_\rho}$ and $\asc$ is the
usual ascent statistic for $(G_\rho,\omega_\rho)$.
\end{theorem}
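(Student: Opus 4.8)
Since the two rational functions in the statement share the denominator $(1-z)(1-qz)\cdots(1-q^dz)$, it suffices to prove that their numerators coincide, and for this it is enough to check that each of the two rational functions equals the power series $\sum_{n\ge 0}\chi_G^{\mathbbm{1}}(q,n)\,z^n$. For the right-hand side this is precisely Theorem~\ref{thm:qchromaticallones}. So the plan is to prove that the left-hand side equals $\sum_{n\ge 0}\chi_G^{\mathbbm{1}}(q,n)\,z^n$ as well, by running the order-polytope argument of Bajo et al.\ and summing over acyclic orientations.

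First I would apply Lemma~\ref{lem:qchromaticqehrhart} with $\lambda=\mathbbm{1}$, which gives $\chi_G^{\mathbbm{1}}(q,n)=\sum_{\rho\in\acyclic{G}}\ehr^{\mathbbm{1}}_{\order{G_\rho}^\circ}(q,n+1)$, hence
\[
\sum_{n\ge0}\chi_G^{\mathbbm{1}}(q,n)\,z^n=\sum_{\rho\in\acyclic{G}}\ \sum_{n\ge0}\ehr^{\mathbbm{1}}_{\order{G_\rho}^\circ}(q,n+1)\,z^n .
\]
The key input is then the $q$-Ehrhart series of the (relatively open) order polytope with the all-ones linear form: for any poset $P$ on $[d]$ and any natural labeling $\omega$,
\[
\sum_{n\ge0}\ehr^{\mathbbm{1}}_{\order{P}^\circ}(q,n+1)\,z^n=\frac{\sum_{\sigma\in\jhset{P,\omega}} q^{\,d+\sum_{j\in\asc(\sigma)}(d-j)}\,z^{\ascnum(\sigma)+1}}{(1-z)(1-qz)\cdots(1-q^dz)}.
\]
I would prove this exactly as in the proof of Theorem~\ref{thm:multivariablechromaticgenerating}: the lattice points of $n\,\order{P}^\circ$ are the strictly order-preserving maps $P\to\{1,\dots,n-1\}$, one triangulates $\order{P}$ by the simplices indexed by the linear extensions in $\jhset{P,\omega}$, passes to the half-open decomposition of $\order{P}^\circ$, and sums the resulting geometric series. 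This is Bajo et al.'s argument for $G$-partitions, and it can also be read off from Gessel's lemma in Section~\ref{subsec:symmetricfunctions} together with the $(P,\omega)$-partition expansion $\sum_{\sigma}F_{\asc(\sigma),d}$ of the generating function for strict order-preserving maps of $P$. Summing over $\rho\in\acyclic{G}$, all denominators coincide and the numerators add, so the left-hand side of the theorem equals $\sum_{n\ge0}\chi_G^{\mathbbm{1}}(q,n)\,z^n$, which is what we wanted.

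I expect the main obstacle to be pinning down the numerator statistics in this order-polytope $q$-Ehrhart series: one must be careful that using the \emph{open} polytope (together with the dilation index $n+1$ supplied by Lemma~\ref{lem:qchromaticqehrhart}) makes the half-open cone decomposition keep the facets indexed by $\asc(\sigma)$, rather than $\des(\sigma)$, closed, which is what produces the exponent $\ascnum(\sigma)+1$ of $z$; and one must track how the weight $q^{\mathbbm{1}\cdot m}$ of a lattice point $m$ becomes the level sum $d+\sum_{j\in\asc(\sigma)}(d-j)$. A useful sanity check is that at $q=1$ the identity reduces to $\sum_{(\rho,\sigma)}z^{\ascnum(\sigma)+1}=\sum_{\pi\in S_d}z^{\ascnum_G(\pi)+1}$, which is already immediate from the bijection $\varphi$ of Theorem~\ref{thm:bijectiongstats}. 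This suggests an alternative route bypassing order polytopes: $\varphi$ is a bijection for \emph{any} choice of natural labelings, so for the rank labelings the numerator identity holds termwise because $\des_G(\pi)=\des(\sigma_\pi)$, hence $\asc_G(\pi)=\asc(\sigma_\pi)$; to pass to an arbitrary natural labeling $\omega_\rho$ one then only needs that $\sum_{\sigma\in\jhset{P,\omega}}q^{\sum_{j\in\asc(\sigma)}(d-j)}z^{\ascnum(\sigma)+1}$ is independent of $\omega$, which is the standard fact from the theory of $(P,\omega)$-partitions that the multiset $\{\asc(\sigma):\sigma\in\jhset{P,\omega}\}$ depends only on $P$.
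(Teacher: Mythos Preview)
Your proposal is correct and follows essentially the same route as the paper: both compute the left-hand side via Lemma~\ref{lem:qchromaticqehrhart} together with the integer-point transform of the homogenized open order polytope (triangulated by linear extensions), specialize to $z_i=q$, and then invoke Theorem~\ref{thm:qchromaticallones} for the right-hand side. Your alternative route via Theorem~\ref{thm:bijectiongstats} plus independence of the multiset $\{\asc(\sigma):\sigma\in\jhset{P,\omega}\}$ from the natural labeling $\omega$ is also valid and in fact gives Corollary~\ref{cor:linearextensionnumerator} directly, whereas the paper deduces that corollary from the present theorem.
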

\begin{proof}
    Recall that Bajo et al.\ proved Lemma~\ref{lem:qchromaticqehrhart}, which states
    \[
        \sum_{n \geq 0} \chi^\mathbbm{1}_G(q,n) z^n = \sum_{\rho \in \acyclic{G}} \sum_{n \geq 0}  \ehr^\mathbbm{1}_{\order{G_\rho}^\circ}(q,n+1) z^n.
    \]
    We can find this generating function via the integer point transform $\sigma_{K_{\widehat{G_\rho}}^\circ}(z_1,\ldots,z_{d+1})$, where $K_{\widehat{G_\rho}}^\circ$ is the homogenization of the open order polytope of $G_\rho$ with the last coordinate corresponding to height, using descent statistics \cite[Exercise 6.23]{crt}.
    For each $\rho \in \acyclic{G}$, the integer point transform is
    \[
    \sigma_{K_{\widehat{G_\rho}}^\circ}(z_1,\ldots,z_{d+1}) = \sum_{\tau \in \jhset{G_\rho}} \frac{z_1 \cdots z_d z_{d+1}^2 \prod_{j \in \asc(\tau)} z_{\tau(j+1)} \cdots z_{d+1} }{(1-z_{d+1}) \prod_{i=1}^{d}(1-z_{\tau(i)} \cdots z_{d+1}) }  \, .
    \]
    Specializing this integer point transform to $z_i = q$ for all $i \in [d]$ and $z_{d+1}=z$ yields
    \[
        \sigma_{K_{\widehat{G_\rho}}^\circ}(q,\ldots,q,z) = \sum_{n \geq 1}  \ehr^\mathbbm{1}_{\order{G_\rho}^\circ}(q,n) z^n.
    \]
    Thus
    \begin{align*}
        \sum_{n \geq 0} \chi^\mathbbm{1}_G(q,n) z^n &= \sum_{\rho \in \acyclic{G}} \sum_{n \geq 0} \ehr^\mathbbm{1}_{\order{G_\rho}^\circ}(q,n+1) z^n \\
        &= \sum_{\rho \in \acyclic{G}} z^{-1} \sum_{n \geq 1}  \ehr^\mathbbm{1}_{\order{G_\rho}^\circ}(q,n) z^n \\
        &= \sum_{\rho \in \acyclic{G}} z^{-1} \sigma_{K_{\widehat{G_\rho}}^\circ}(q,\ldots,q,z) \\
        &= \sum_{\rho \in \acyclic{G}} z^{-1}\sum_{\tau \in \jhset{G_\rho,\omega_\rho}} \frac{q^{d+\sum_{j \in \asc(\tau)}d-j} z^{\ascnum(\tau)+2}}{(1-z)(1-qz) \cdots(1-q^dz)} \\
        &= \frac{\sum_{\rho \in \acyclic{G}} \sum_{\tau \in \jhset{G_\rho}} q^{d+\sum_{j \in \asc(\tau)}d-j} z^{\ascnum(\tau)+1} }{(1-z)(1-qz) \cdots(1-q^dz)}.
    \end{align*} 
    By Theorem~\ref{thm:qchromaticallones}, we know that $\sum_{n \geq 0} \chi^\mathbbm{1}_G(q,n) z^n = \frac{\sum_{\pi \in S_d} q^{d+\sum_{j \in \asc_G(\pi)} d-j} z^{\ascnum_G(\pi)+1}}{(1-z)(1-qz) \cdots (1-q^d z)}$ and so the two rational functions in the statement of the theorem are equal.
\end{proof}

As a consequence of this rational function equality, the numerator polynomial of the $q$-chromatic polynomial with the linear form $\mathbbm{1}$ can be computed using any natural labeling on the induced posets, and so $G$-statistics unify these expressions.

\begin{corollary}\label{cor:linearextensionnumerator}
    Let $G = ([d],E)$ be a graph, and for $\rho \in \acyclic{G}$, let $\omega_\rho$ be any natural labeling of $G_\rho$. Then
    \[
        \sum_{\pi \in S_d} q^{\sum_{j \in \asc_G(\pi)} d-j} z^{\ascnum_G(\pi)}= \sum_{(\rho,\sigma)} q^{\sum_{j \in \asc(\sigma)} d-j} z^{\ascnum(\sigma)}
    \]
    where $(\rho,\sigma)$ ranges over $\rho \in \acyclic{G}$ and $\sigma \in \jhset{G_\rho}$ and $\asc$ is the usual ascent statistic for $(G_\rho,\omega_\rho)$.
\end{corollary}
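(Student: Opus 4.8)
The plan is to obtain this as an immediate consequence of Theorem~\ref{thm:qchromaticgenfunctiondoublesum}. That theorem furnishes two rational functions that are equal and that share the common denominator $(1-z)(1-qz)\cdots(1-q^dz)$; clearing this denominator gives the polynomial identity
\[
    \sum_{(\rho,\sigma)} q^{d+\sum_{j \in \asc(\sigma)} d-j} z^{\ascnum(\sigma)+1} = \sum_{\pi \in S_d} q^{d+\sum_{j \in \asc_G(\pi)} d-j} z^{\ascnum_G(\pi)+1}
\]
in $\mathbb{Z}[q,z]$, with $(\rho,\sigma)$ ranging as in the statement. The first step is to observe that every monomial on either side is divisible by $q^d z$: the $q$-exponent is always at least $d$, and since $\ascnum_G(\pi)+1\ge 1$ and $\ascnum(\sigma)+1\ge 1$ the $z$-exponent is always at least $1$. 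Dividing both sides by $q^d z$ then yields exactly the claimed identity, so the argument is a one-line bookkeeping step once Theorem~\ref{thm:qchromaticgenfunctiondoublesum} is in hand.

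I would also record the more combinatorial route, which explains why the identity holds and which labelings matter. Fix the rank labelings $\eta_\rho$ on the induced posets and apply the bijection $\varphi$ of Theorem~\ref{thm:bijectiongstats}: for each $\pi\in S_d$ we have $\des_G(\pi)=\des(\sigma_\pi)$, and complementing inside $[d-1]$ gives $\asc_G(\pi)=\asc(\sigma_\pi)$; hence $\ascnum_G(\pi)=\ascnum(\sigma_\pi)$ and $\sum_{j\in\asc_G(\pi)}d-j=\sum_{j\in\asc(\sigma_\pi)}d-j$, so the monomial contributed by $\pi$ to the left-hand side equals the monomial contributed by $\varphi(\pi)=(\rho_\pi,\sigma_\pi)$ to the right-hand side. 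Summing over the bijection $\varphi$ proves the corollary in the case $\omega_\rho=\eta_\rho$.

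The one point that needs more than bookkeeping is that the pair-indexed sum is independent of the choice of natural labelings $\omega_\rho$, so that the rank-labeling computation suffices for arbitrary natural labelings. But this is already subsumed by Theorem~\ref{thm:qchromaticgenfunctiondoublesum}, whose proof passes through Bajo et al.'s Lemma~\ref{lem:qchromaticqehrhart} and the standard computation of the integer-point transform $\sigma_{K^\circ_{\widehat{G_\rho}}}$ via linear extensions (Exercise~6.23 of \cite{crt}), a quantity that does not depend on which natural labeling of $G_\rho$ is used. Consequently no separate argument is required: I would simply invoke Theorem~\ref{thm:qchromaticgenfunctiondoublesum} and divide by $q^d z$. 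The only thing to double-check is the divisibility by $q^d z$, which is immediate, so there is no real obstacle here.
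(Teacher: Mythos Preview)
Your proposal is correct and matches the paper's approach: the corollary is stated immediately after Theorem~\ref{thm:qchromaticgenfunctiondoublesum} with the remark that it is ``a consequence of this rational function equality,'' i.e., equate the numerators over the common denominator and cancel the $q^d z$ factor, exactly as you do. Your additional combinatorial paragraph via Theorem~\ref{thm:bijectiongstats} is a nice supplement but not needed for the argument.
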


\subsection{The leading coefficient of $\widetilde{\chi}_G^\mathbbm{1}(q,x)$}\label{subsec:gmajorpoly}

Bajo et al.~\cite{qchromatic} showed that $\chi_G^\lambda(q,n)$ is a polynomial
in $[n]_q := 1 + q + \dots + q^{ n-1 } $ with coefficients in $\mathbb{Z}(q)$.
We denote this polynomial by $\widetilde{\chi}_G^\mathbbm{1}(q,x) \in \mathbb{Q}(q)[x]$.
They show that the leading coefficient with respect to $x$ is the polynomial $\frac{1}{[d]_q!} \sum_{(\rho,\sigma)} q^{d +\maj(\sigma)}$, where we sum over all $\rho \in \acyclic{G}$ and all $\sigma \in \jhset{G_\rho}$, where $G_\rho$ is given some natural labeling \cite{qchromatic}. 

\begin{example}
    Let $G$ be the path graph on two vertices. 
    Bajo et al.~\cite{qchromatic} computed  $\widetilde{\chi}_G^\mathbbm{1}(q,x) = \frac{2 q^2}{1+q} x^2 + \frac{-2q^2}{1+q}x$. 
    The leading coefficient is $\frac{2 q^2}{1+q} = \frac{2q^2}{[2]_q} $.
    Indeed, $2 = \sum_{(\rho,\sigma)} q^{\maj(\sigma)}$ since we only have two acyclic orientations and each has only one linear extension with no descents.
\end{example}

The following corollary shows how Theorem~\ref{thm:bijectiongstats} provides an alternative form of the leading term of $\widetilde{\chi}_G^\mathbbm{1}(q,x)$ involving $G$-major indices.
\begin{corollary}\label{cor:leadingcoef}
    The leading coefficient of $\tilde{\chi}_G^\mathbbm{1}(q,x)$ is 
    \[
        \frac{1}{[d]_q!} \sum_{(\rho,\sigma)} q^{d + \maj(\sigma)} = \frac{1}{[d]_q!}\sum_{\pi \in S_d}
q^{d+\maj_G(\pi)} .
    \]
\end{corollary}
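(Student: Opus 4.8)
The plan is to read the corollary off from Theorem~\ref{thm:bijectiongstats} combined with the formula of Bajo et al.\ recalled just above. That formula already gives the first equality, for the induced posets equipped with \emph{any} natural labeling; in particular we are free to equip each $G_\rho$ with its rank labeling $\eta_\rho$, which the discussion preceding Theorem~\ref{thm:bijectiongstats} confirms is a natural labeling. So the entire task is to establish
\[
\sum_{(\rho,\sigma)} q^{d+\maj(\sigma)} = \sum_{\pi \in S_d} q^{d+\maj_G(\pi)},
\]
where $(\rho,\sigma)$ ranges over $\rho \in \acyclic{G}$ and $\sigma \in \jhset{G_\rho,\eta_\rho}$.

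The key input is the bijection $\varphi : S_d \to \{(\rho,\sigma)\}$, $\pi \mapsto (\rho_\pi,\sigma_\pi)$ of Theorem~\ref{thm:bijectiongstats}, which, when the rank labelings are used, additionally satisfies $\des_G(\pi) = \des(\sigma_\pi)$ for every $\pi \in S_d$. Since the $G$-major index of a permutation is by definition $\maj_G(\pi) = \sum_{i \in \des_G(\pi)} i$, and likewise $\maj(\sigma) = \sum_{i \in \des(\sigma)} i$ for a linear extension $\sigma$, the equality of descent sets upgrades to $\maj_G(\pi) = \maj(\sigma_\pi)$. I would then transport the sum $\sum_{\pi \in S_d} q^{d+\maj_G(\pi)}$ along $\varphi$: each $\pi$ contributes the monomial $q^{d+\maj_G(\pi)} = q^{d+\maj(\sigma_\pi)}$, and as $\pi$ runs over $S_d$ the pair $(\rho_\pi,\sigma_\pi)$ runs bijectively over all admissible $(\rho,\sigma)$, giving the displayed identity. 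Multiplying by $\frac{1}{[d]_q!}$ and invoking the Bajo et al.\ formula then yields the corollary.

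There is essentially no obstacle beyond bookkeeping, since Theorem~\ref{thm:bijectiongstats} does the real work; the one point I would flag for the reader is that the sum $\sum_{(\rho,\sigma)} q^{\maj(\sigma)}$ is independent of the natural labelings chosen --- as is implicit in the Bajo et al.\ statement, and also follows from $P$-partition theory, the polynomial $\sum_{\sigma \in \jhset{G_\rho}} q^{\maj(\sigma)}$ being the numerator of the order-polynomial generating function of $G_\rho$ --- so that the identity proved for the rank labelings is the same identity appearing in the corollary for an arbitrary choice of $\omega_\rho$.
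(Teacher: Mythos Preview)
Your proof is correct and follows essentially the same approach as the paper: apply the bijection of Theorem~\ref{thm:bijectiongstats} with the rank labelings so that $\des_G(\pi)=\des(\sigma_\pi)$, whence $\maj_G(\pi)=\maj(\sigma_\pi)$, and transport the sum. Your additional remark about label-independence of $\sum_{(\rho,\sigma)} q^{\maj(\sigma)}$ is a helpful clarification but not needed for the argument, as the paper simply works directly with the rank labeling.
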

\begin{proof}
    By our bijection in Theorem~\ref{thm:bijectiongstats}, each $(\rho,\sigma)$ corresponds uniquely to a permutation where the descents sets are the same under the rank labeling. Hence the $G$-major index and usual major index must be the same. 
\end{proof}

For a graph $G$ on $[d]$, let the \Def{$G$-major index polynomial} be 
\[
    \sum_{\pi \in S_d} q^{\maj_G(\pi)}.
\]
We will now show that the degree of the $G$-major index polynomial connects the minimum sum coloring problem to the leading coefficient of $\widetilde{\chi}_G^\mathbbm{1}(q,x)$.
The minimum sum coloring problem was first studied using graph theory by Ewa Kubicka \cite{chromaticsums}.

\begin{lemma}\label{lem:minimumandcolorings}
    We have
    \[
        \min\left\{ d+\sum_{j \in \asc_G(\pi)} d-j : \pi \in S_d \right\} = \min\left\{ \sum_{v \in V} f(v): f:V \to [m] \text{ a coloring} \right\}
    \]
    and the number of colorings achieving the minimum sum of colors is the same as the number of permutations
achieving the minimum on the left-hand side. 
\end{lemma}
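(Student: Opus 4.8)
The plan is to realize both minima, and their attaining counts, through the $G$-sequence coloring $w_\pi$ of Proposition~\ref{prop:weightedcolorsum} together with the polyhedral decomposition $\widehat{K_G}=\biguplus_\pi\widehat{\Delta_\pi^G}$ of Proposition~\ref{prop:graphicaldecomposition}. First I would record the two easy facts: by Proposition~\ref{prop:weightedcolorsum} with $\lambda=\mathbbm 1$ we have $d+\sum_{j\in\asc_G(\pi)}(d-j)=\sum_{v\in V}w_\pi(v)$, and by Lemma~\ref{lem:independentgsequence} each $w_\pi$ is a proper coloring. This immediately gives the inequality ``$\ge$'' between the two minima, since every $w_\pi$ is one of the colorings being minimized on the right.

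For ``$\le$'' I would build a map from colorings to permutations. A minimum-sum proper coloring may be assumed to use exactly the colors $\{1,\dots,k\}$ for some $k\le d$: if a value is skipped, shifting all larger values down by one keeps the coloring proper and strictly decreases the sum. For such a normalized coloring $f$, the integer point $a:=(f(1),\dots,f(d),k+1)$ lies in $\widehat{K_G}$, hence in a unique cell $\widehat{\Delta_{\pi_f}^G}$. The crux is the estimate $\sum_v w_{\pi_f}(v)\le\sum_v f(v)$, with equality precisely when $w_{\pi_f}=f$: writing $A_1,\dots,A_{\ascnum_G(\pi_f)+1}$ for the $G$-sequence blocks of $\pi_f$, the ascents of $\pi_f$ are exactly the block boundaries, so the defining inequalities of $\widehat{\Delta_{\pi_f}^G}$ (weakly increasing along $\pi_f$, strict at each $G$-ascent) force, using integrality, every coordinate of $a$ indexed by $A_m$ to be at least $m$; since $w_{\pi_f}$ assigns color $m$ to $A_m$, the bound follows term by term. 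Applying this to a minimum-sum coloring yields ``$\le$'', so the two minima coincide; write $M$ for the common value.

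For the count, I would show $\pi\mapsto w_\pi$ and $f\mapsto\pi_f$ restrict to mutually inverse bijections between $\{\pi\in S_d:d+\sum_{j\in\asc_G(\pi)}(d-j)=M\}$ and $\{f: f\text{ a proper coloring},\ \sum_v f(v)=M\}$. If $\sum_v f(v)=M$, then $M\le\sum_v w_{\pi_f}(v)\le\sum_v f(v)=M$ forces equality in the estimate above, so $w_{\pi_f}=f$ and $\pi_f$ is a minimizing permutation. Conversely, for any $\pi$ the point $(w_\pi(1),\dots,w_\pi(d),\ascnum_G(\pi)+2)$ satisfies the inequalities defining $\widehat{\Delta_\pi^G}$ --- $w_\pi$ is constant on each $G$-sequence block and jumps by exactly $1$ at each block boundary, i.e.\ at each $G$-ascent --- so by uniqueness in Proposition~\ref{prop:graphicaldecomposition} we get $\pi_{w_\pi}=\pi$. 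Hence the two maps are inverse to one another on the minimizing sets, and the cardinalities agree.

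I expect the main obstacle to be the inequality $\sum_v w_{\pi_f}(v)\le\sum_v f(v)$ together with its exact equality condition: this is what makes the normalization of colorings necessary and what pins the correspondence down to the minimizing sets rather than to all colorings. Checking $\pi_{w_\pi}=\pi$, so that $\pi\mapsto w_\pi$ really is the inverse map, is the other place requiring care, though it reduces to elementary inequalities about where $w_\pi$ stays constant versus where it increases.
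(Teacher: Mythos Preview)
Your proposal is correct and follows essentially the same approach as the paper: both use Proposition~\ref{prop:weightedcolorsum} with $\lambda=\mathbbm 1$ to identify $d+\sum_{j\in\asc_G(\pi)}(d-j)$ with $\sum_v w_\pi(v)$, use the decomposition of Proposition~\ref{prop:graphicaldecomposition} to assign each coloring to a unique cone, and observe that the $G$-sequence coloring is the cone-wise integer minimum. You are more explicit than the paper about the normalization of minimum-sum colorings and about why $\pi\mapsto w_\pi$ and $f\mapsto\pi_f$ are mutually inverse on the minimizing sets, which is a welcome clarification of the counting argument.
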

\begin{proof}
    Minimizing the sum of colors is equivalent to minimizing $\mathbbm{1} \cdot x$ for $x \in K_G \cap \mathbb{Z}_{>0}^d$. In addition, the minimum of $\mathbbm{1} \cdot x$ for $x \in \Delta_\pi \cap \mathbb{Z}_{>0}^d$ for $\pi \in S_d$ must occur at the integer point from the $G$-sequence coloring.
    In other words, suppose $(S_1,\ldots,S_k)$ is the $G$-sequence of $\pi$, then let $x_i = 1$ for $i \in S_1$,
$x_j=2$ where $j \in S_2$ and so on. Since any other coloring in $\Delta_\pi$ either splits our $G$-sequence
into more sets or uses the same sets of the $G$-sequence with higher colors, the $G$-sequence coloring is a
minimum in each cone. 
Moreover, $d + \sum_{j \in \asc_G(\pi)} d-j = \sum_{i=1}^k i |S_i|$ by Proposition~\ref{prop:weightedcolorsum}. 
Thus, taking the minimum value over all cones gives $\min\left\{ d+\sum_{j
\in \asc_G(\pi)} d-j : \pi \in S_d \right\}$ which must equal $\min\left\{ \sum_{v \in V} f(v): f:V \to [m]
\text{ a coloring} \right\}$. Moreover, each coloring achieving this minimum must be in some unique cone, by our
construction of the subdivision of $K_G$. 
So, the number of minimum colorings is also the same as the number of permutations achieving $\min\left\{ d+\sum_{j \in \asc_G(\pi)} d-j : \pi \in S_d \right\}$.
\end{proof}

\begin{corollary}\label{cor:gmajordegree}
    For a graph $G=([d],E)$, the degree of $\sum_{\pi \in S_d} q^{\maj_G(\pi)}$ equals
    \[
        \binom{d+1}{2} - \min\left\{ \sum_{v \in V} f(v): f:V \to [m] \text{ a coloring} \right\}
    \]
    and its leading coefficient equals the number of colorings achieving this minimum with respect to the sum of colors.
\end{corollary}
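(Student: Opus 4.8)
The idea is to express $\maj_G(\pi)$ in terms of a proper coloring canonically attached to $\pi$, and then match those colorings with minimum‑sum colorings. First I would record that, by the definitions of $G$-ascent and $G$-descent, every position $i\in[d-1]$ is exactly one of the two, so $\asc_G(\pi)\uplus\des_G(\pi)=[d-1]$ and therefore $\maj_G(\pi)=\binom{d}{2}-\sum_{j\in\asc_G(\pi)}j$. Consequently $\deg\sum_{\pi\in S_d}q^{\maj_G(\pi)}=\binom{d}{2}-\min_{\pi}\sum_{j\in\asc_G(\pi)}j$, and the leading coefficient counts the permutations attaining that minimum. Next, applying Proposition~\ref{prop:weightedcolorsum} with $\lambda=\mathbbm{1}$ and writing $c_\pi:=\ascnum_G(\pi)+1$ for the number of colors of the $G$-sequence coloring $w_\pi$, a one‑line computation gives $\sum_{j\in\asc_G(\pi)}j = c_\pi d-\sum_{v}w_\pi(v)$. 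Introducing the \emph{reversed coloring} $\overline{w}_\pi(v):=c_\pi+1-w_\pi(v)$ — which is again a proper coloring because each block of the $G$-sequence is independent (Lemma~\ref{lem:independentgsequence}) — this rearranges to $\sum_{j\in\asc_G(\pi)}j=\sum_v\overline{w}_\pi(v)-d$, hence the clean identity $\maj_G(\pi)=\binom{d+1}{2}-\sum_v\overline{w}_\pi(v)$.

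From this identity, $\deg\sum_\pi q^{\maj_G(\pi)}=\binom{d+1}{2}-\min_\pi\sum_v\overline{w}_\pi(v)$, and the permutations maximizing $\maj_G$ are exactly those for which $\overline{w}_\pi$ has minimum color sum. Since each $\overline{w}_\pi$ is a proper coloring, $\min_\pi\sum_v\overline{w}_\pi(v)\ge \Sigma(G):=\min\{\sum_v f(v): f \text{ a proper coloring}\}$, so it remains to produce, for each minimum‑sum coloring $f^*$, a permutation $\pi$ with $\overline{w}_\pi=f^*$, and to see that $\pi\mapsto\overline{w}_\pi$ is injective on the set of such $\pi$. The key structural observation is that if $f^*$ is a minimum‑sum coloring with color classes $C_1,\dots,C_c$, then consecutive classes $C_i,C_{i+1}$ must be joined by an edge — otherwise recoloring $C_{i+1}$ with color $i$ and shifting the higher colors down strictly lowers $\sum_v f(v)$. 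Feeding the \emph{reversed} ordered set partition $(C_c,\dots,C_1)$ into the construction in the proof of Proposition~\ref{prop:graphicaldecomposition} yields a permutation whose $G$-ascents can only occur at block boundaries, and, by the forced‑ascent argument in that proof, an edge between two consecutive blocks forces an ascent there; since the reversed partition has the same consecutive pairs, \emph{every} boundary is a $G$-ascent, so $(C_c,\dots,C_1)$ is literally the $G$-sequence of this $\pi$. Then $w_\pi$ assigns the reversed colors and $\overline{w}_\pi=f^*$, giving surjectivity. Injectivity is immediate: $\overline{w}_\pi$ determines $w_\pi$ (it has the same number of colors), $w_\pi$ determines the ordered set partition, and that determines $\pi$ by the disjointness of the cones $\widehat{\Delta_\pi}$ in Proposition~\ref{prop:graphicaldecomposition}. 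Combining with Lemma~\ref{lem:minimumandcolorings}, the number of such $\pi$ equals the number of minimum‑sum colorings, which is the claimed leading coefficient, and $\min_\pi\sum_v\overline{w}_\pi(v)=\Sigma(G)$ gives the degree formula $\binom{d+1}{2}-\Sigma(G)$.

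\textbf{Main obstacle.} The delicate point is the realizability step: a general ordered set partition into independent sets need \emph{not} be the $G$-sequence of any permutation (the Proposition~\ref{prop:graphicaldecomposition} construction can ``coarsen'' it by merging consecutive edgeless blocks), so one genuinely needs the minimum‑sum structural property — an edge between every pair of consecutive color classes — to certify that reversing the block order introduces no coarsening, together with the uniqueness of the permutation recovered from its $G$-sequence. Everything else is bookkeeping around the identity $\maj_G(\pi)=\binom{d+1}{2}-\sum_v\overline{w}_\pi(v)$ and Lemma~\ref{lem:minimumandcolorings}.
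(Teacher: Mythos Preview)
Your argument is correct, but it follows a genuinely different route from the paper's. The paper first writes $\maj_G(\pi)=\binom{d+1}{2}-(d+\sum_{j\in\asc_G(\pi)}j)$ as you do, but then invokes the symmetry result Corollary~\ref{cor:qznumeratorsymmetry} (whose proof relies on the bijection of Theorem~\ref{thm:bijectiongstats} and the reversal involution of Proposition~\ref{prop:ascentopposite}) to exchange the multiset $\{d+\sum_j j\}_\pi$ for $\{d+\sum_j(d-j)\}_\pi$, after which Lemma~\ref{lem:minimumandcolorings} applies directly. You instead bypass all of Section~\ref{sec:gstatsandorientations} and Section~\ref{sec:symmetry}: the reversed coloring $\overline{w}_\pi$ gives the clean identity $\maj_G(\pi)=\binom{d+1}{2}-\sum_v\overline{w}_\pi(v)$ immediately from Proposition~\ref{prop:weightedcolorsum}, and your realizability step (minimum-sum colorings force an edge between consecutive color classes, so the reversed partition survives the Proposition~\ref{prop:graphicaldecomposition} construction without coarsening) plus the uniqueness-from-$G$-sequence argument give the bijection with minimum-sum colorings directly. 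Your approach is more elementary and self-contained within Section~\ref{sec:gstats}; the paper's approach has the advantage of reusing a structural symmetry that is of independent interest. One small remark: your closing appeal to Lemma~\ref{lem:minimumandcolorings} is redundant, since your surjectivity/injectivity argument already establishes both the degree and the leading coefficient; the lemma concerns $w_\pi$ rather than $\overline{w}_\pi$ and is not actually needed here.
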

\begin{proof}
    We will reformulate the powers of $q$ to apply Lemma~\ref{lem:minimumandcolorings}.
    First, we know for $\pi \in S_d$
    \[
        \maj_G(\pi) + \sum_{j \in \asc_G(\pi)} j =  \sum_{i \in \des_G(\pi)} i + \sum_{j \in \asc_G(\pi)} j =
\sum_{i \in [d-1]} i = \binom{d}{2} \, ,
    \]
    whence $\maj_G(\pi) = \binom{d}{2} - \sum_{j \in \asc_G(\pi)} j = \binom{d+1}{2} - d - \sum_{j \in \asc_G(\pi)} j $.
    This means our $G$-major index polynomial is  
    \[
        \sum_{\pi \in S_d} q^{\maj_G(\pi)} = \sum_{\pi \in S_d} q^{\binom{d+1}{2} - d - \sum_{j \in \asc_G(\pi)} j}
    \]
    and by Corollary~\ref{cor:qznumeratorsymmetry}
    \[
        \sum_{\pi \in S_d} q^{\binom{d+1}{2} - d-\sum_{j \in \asc_G(\pi)} j} = \sum_{\pi \in S_d} q^{\binom{d}{2} - d -\sum_{j \in \asc_G(\pi)} d-j}.
    \]
    The highest power of this polynomial occurs when $ d +\sum_{j \in \asc_G(\pi)} d-j$ is minimized over all
$\pi \in S_d$ and the corresponding coefficient is given by the number permutations achieving this minimum. By Lemma~\ref{lem:minimumandcolorings} this value is given by $\min\left\{ \sum_{v \in V} f(v): f:V \to [m] \text{ a coloring} \right\}$. 
\end{proof}

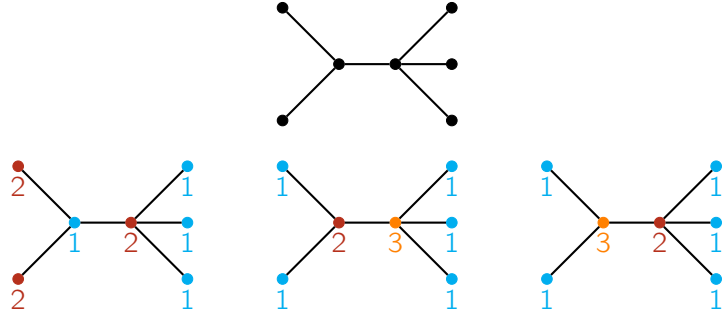
\begin{figure}
    \begin{tikzpicture}
    \begin{scope}[xshift=0, yshift=0, scale=0.75]    
        \vertex[fill](a1) at (-1,1) {};
        \vertex[fill](a2) at (-1,-1) {};
        \vertex[fill](a3) at (0,0) {};
        \vertex[fill](a4) at (1,0) {};
        \vertex[fill](a5) at (2,1) {};
        \vertex[fill](a6) at (2,0) {};
        \vertex[fill](a7) at (2,-1) {};
        

        \draw[thick] (a1) --  (a3);
        \draw[thick] (a2) --  (a3);
        \draw[thick] (a3) --  (a4);
        \draw[thick] (a4) --  (a5);
        \draw[thick] (a4) --  (a6);
        \draw[thick] (a4) --  (a7);
      
    \end{scope}

    \begin{scope}[xshift=0, yshift=-60, scale=0.75]    
        \vertex[fill,cyan](a1) at (-1,1) {};
        \vertex[fill,cyan](a2) at (-1,-1) {};
        \vertex[fill,BrickRed](a3) at (0,0) {};
        \vertex[fill,orange](a4) at (1,0) {};
        \vertex[fill,cyan](a5) at (2,1) {};
        \vertex[fill,cyan](a6) at (2,0) {};
        \vertex[fill,cyan](a7) at (2,-1) {};
        
        \node[anchor=north,cyan] at (a1) {$1$};
        \node[anchor=north,cyan] at (a2) {$1$};
        \node[anchor=north,BrickRed] at (a3) {$2$};
        \node[anchor=north,orange] at (a4) {$3$};
        \node[anchor=north,cyan] at (a5) {$1$};
        \node[anchor=north,cyan] at (a6) {$1$};
        \node[anchor=north,cyan] at (a7) {$1$};

        \draw[thick] (a1) --  (a3);
        \draw[thick] (a2) --  (a3);
        \draw[thick] (a3) --  (a4);
        \draw[thick] (a4) --  (a5);
        \draw[thick] (a4) --  (a6);
        \draw[thick] (a4) --  (a7);
      
    \end{scope}
    \begin{scope}[xshift=-100, yshift=-60, scale=0.75]    
        \vertex[fill,BrickRed](a1) at (-1,1) {};
        \vertex[fill,BrickRed](a2) at (-1,-1) {};
        \vertex[fill,cyan](a3) at (0,0) {};
        \vertex[fill,BrickRed](a4) at (1,0) {};
        \vertex[fill,cyan](a5) at (2,1) {};
        \vertex[fill,cyan](a6) at (2,0) {};
        \vertex[fill,cyan](a7) at (2,-1) {};
        
        \node[anchor=north,BrickRed] at (a1) {$2$};
        \node[anchor=north,BrickRed] at (a2) {$2$};
        \node[anchor=north,cyan] at (a3) {$1$};
        \node[anchor=north,BrickRed] at (a4) {$2$};
        \node[anchor=north,cyan] at (a5) {$1$};
        \node[anchor=north,cyan] at (a6) {$1$};
        \node[anchor=north,cyan] at (a7) {$1$};

        \draw[thick] (a1) --  (a3);
        \draw[thick] (a2) --  (a3);
        \draw[thick] (a3) --  (a4);
        \draw[thick] (a4) --  (a5);
        \draw[thick] (a4) --  (a6);
        \draw[thick] (a4) --  (a7);
      
    \end{scope}
    \begin{scope}[xshift=100, yshift=-60, scale=0.75]    
        \vertex[fill,cyan](a1) at (-1,1) {};
        \vertex[fill,cyan](a2) at (-1,-1) {};
        \vertex[fill,orange](a3) at (0,0) {};
        \vertex[fill,BrickRed](a4) at (1,0) {};
        \vertex[fill,cyan](a5) at (2,1) {};
        \vertex[fill,cyan](a6) at (2,0) {};
        \vertex[fill,cyan](a7) at (2,-1) {};
        
        \node[anchor=north,cyan] at (a1) {$1$};
        \node[anchor=north,cyan] at (a2) {$1$};
        \node[anchor=north,orange] at (a3) {$3$};
        \node[anchor=north,BrickRed] at (a4) {$2$};
        \node[anchor=north,cyan] at (a5) {$1$};
        \node[anchor=north,cyan] at (a6) {$1$};
        \node[anchor=north,cyan] at (a7) {$1$};

        \draw[thick] (a1) --  (a3);
        \draw[thick] (a2) --  (a3);
        \draw[thick] (a3) --  (a4);
        \draw[thick] (a4) --  (a5);
        \draw[thick] (a4) --  (a6);
        \draw[thick] (a4) --  (a7);
      
    \end{scope}
\end{tikzpicture}
    \caption{A graph and three colorings achieving the minimum sum of colors.}\label{fig:gmajorchromaticsum}
\end{figure}

\begin{example}
    The $G$-major index polynomial of the graph $G$ in Figure~\ref{fig:gmajorchromaticsum} is 
    \begin{align*}
        \sum_{\pi \in S_7} q^{\maj_G(\pi)} &= 3q^{18} + 8q^{17} + 20q^{16} + 56q^{15} + 101q^{14} + 167q^{13} + 249q^{12} + 358q^{11} + 448q^{10} \\
        &\quad + 529q^{9} + 555q^8 + 561q^7 + 552q^6 + 470q^5 + 358q^4 + 263q^3 + 170q^2 + 108q + 64.
    \end{align*}
    The leading coefficient is $3$ and so there are three colorings achieving the chromatic sum.
    These colorings must sum to
    \[
        \binom{8}{2} - 18 = 28-18 = 10.
    \]
    The three colorings are shown in Figure~\ref{fig:gmajorchromaticsum}.
    We note that this example also shows that the colorings achieving the minimum sum of colors for trees may require more than two colors.
\end{example}

Bajo et al.~\cite{qchromatic} made the following conjecture.

\begin{conjecture}[Bajo et al.]\label{conj:tildechitrees}
The leading coefficient of $\widetilde{\chi}_G^\mathbbm{1}(q,x)$ distinguishes trees.
\end{conjecture}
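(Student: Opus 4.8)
The plan is to reduce the statement to a question about permutation statistics and then attack it by induction on the number of vertices via leaf deletion. By Corollary~\ref{cor:leadingcoef}, the leading coefficient of $\widetilde{\chi}_G^{\mathbbm{1}}(q,x)$ is $\frac{1}{[d]_q!}\sum_{\pi\in S_d}q^{d+\maj_G(\pi)}$, and since every tree on $[d]$ has chromatic number $2$, all the polynomials $\widetilde{\chi}_T^{\mathbbm{1}}(q,x)$ (for $T$ a tree on $[d]$) have $x$-degree $d$; so it suffices to prove that the $G$-major index polynomial $L_T(q):=\sum_{\pi\in S_d}q^{\maj_T(\pi)}$ determines $T$ up to isomorphism. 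By Theorem~\ref{thm:bijectiongstats}, $L_T(q)=\sum_{\rho\in\acyclic{T}}W_{T_\rho}(q)$ where $W_{T_\rho}(q):=\sum_{\sigma\in\jhset{T_\rho,\omega_\rho}}q^{\maj(\sigma)}$ is the major-index enumerator of the linear extensions of the induced poset $T_\rho$; crucially, because $T$ is a tree every orientation of its edges is acyclic, so this is a sum of exactly $2^{d-1}$ terms, one per edge-orientation, and each $T_\rho$ is a poset whose Hasse diagram is $T$ itself. A first batch of invariants of $T$ is then immediate: $L_T(1)=d!$, the constant term of $L_T$ is $2^{d-1}$, and by Corollary~\ref{cor:gmajordegree} and Lemma~\ref{lem:minimumandcolorings} the degree of $L_T$ equals $\binom{d+1}{2}$ minus the chromatic sum of $T$, with leading coefficient the number of minimum-sum colorings.

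Next I would extract local structure of $T$ from the extreme coefficients of $L_T$. For a fixed $\rho$, the number of linear extensions of $T_\rho$ whose descent set lies in an initial segment $\{1,\dots,k\}$ is an order-polynomial-type quantity, and summing over $\rho$ turns these flag counts into expressions explicit in the combinatorics of $T$; for instance a short computation gives $[q^0]L_T+[q^1]L_T=\sum_{v\in V}2^{\,d-1-\deg v}$, the right-hand side being $\sum_{\rho}(\text{number of sources of }\rho)$. Pushing this through the higher partial sums $\sum_{i\le k}[q^i]L_T$ should recover the entire degree sequence of $T$ and, more ambitiously, the number of subtrees of $T$ isomorphic to each small fixed tree (paths of each length, stars, brooms, and so on), as these counts appear among the coefficients governing the small-major-index linear extensions of the posets $T_\rho$. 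Symmetrically, Lemma~\ref{lem:minimumandcolorings} describes the top coefficients of $L_T$ in terms of the near-minimum-sum colorings of $T$, whose enumeration again records where the low-degree vertices sit.

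With this data in hand I would run the induction. Fix a leaf $v$ with neighbour $u$; since orientations of a tree are unconstrained, $\acyclic{T}\cong\acyclic{T\setminus v}\times\{\,v\to u,\ u\to v\,\}$, and the linear extensions of $T_\rho$ are obtained from those of $(T\setminus v)_{\rho'}$ by inserting $v$ in every slot consistent with its single relation to $u$, with a major-index shift determined by that slot and by the label of $v$. Summing yields a relation expressing $L_T(q)$ through the enumerators $W_{(T\setminus v)_{\rho'}}(q)$ refined by insertion position; the objective is to show that, together with the degree-sequence and subtree invariants already extracted, this relation pins down $L_{T\setminus v}$ and the rooted isomorphism type of $(T\setminus v,u)$, and hence $T$. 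It would be cleanest to choose $v$ so that the insertion behaves the same way for all $\rho'$ — for instance when $u$ has degree $2$ — and to dispatch the remaining trees, in which every support vertex has degree at least $3$, using the direct invariants above.

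I expect the genuine obstacle, and the reason this remains conjectural, to be exactly the step where a statistic on permutations — here a sum over linear extensions aggregated over \emph{all} acyclic orientations — must be converted into a bona fide reconstruction of $T$: the major-index shift coming from inserting the leaf $v$ depends on the orientation $\rho'$, so the leaf-deletion relation does not factor in general, and one must use in an essential way that a tree has no cycles (so that all orientations are allowed and the posets $T_\rho$ are very thin) to force the aggregated relation to collapse to something from which $T\setminus v$ and its attachment point can be read off. It is worth recording that this conjecture follows from the Loehr--Warrington conjecture: the principal specialization $X_G(q,q^2,\dots)$ is $\chi_G^{\mathbbm{1}}(q,n)$, which determines every coefficient of $\widetilde{\chi}_G^{\mathbbm{1}}(q,x)$, in particular its leading one. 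The content of a direct proof is therefore to show that for trees this single coefficient already suffices, which is precisely what the invariant extraction and induction sketched above are designed to do.
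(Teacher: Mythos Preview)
The statement you are attempting to prove is listed in the paper as a \emph{conjecture} (attributed to Bajo et al.), not as a theorem; the paper gives no proof, so there is nothing to compare your argument against. Your write-up is, as you yourself concede, a plan rather than a proof: the extraction of the degree sequence and the subtree counts from the partial sums $\sum_{i\le k}[q^i]L_T$ is asserted but not carried out, and the leaf-deletion induction hinges on the relation between $L_T(q)$ and the enumerators $W_{(T\setminus v)_{\rho'}}(q)$ ``pinning down'' the rooted isomorphism type of $(T\setminus v,u)$, which you do not establish. The difficulty you flag --- that the major-index shift from inserting the leaf depends on the orientation $\rho'$, so the recursion does not factor --- is real and is exactly why the statement remains open; nothing in your outline bypasses it.

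There is also a logical error in your final paragraph. You claim that Conjecture~\ref{conj:tildechitrees} follows from the Loehr--Warrington conjecture, reasoning that the principal specialization determines every coefficient of $\widetilde{\chi}_G^{\mathbbm{1}}(q,x)$, in particular the leading one. But this is the wrong direction: Loehr--Warrington asserts that the \emph{entire} polynomial $\widetilde{\chi}_G^{\mathbbm{1}}(q,x)$ distinguishes trees, whereas Conjecture~\ref{conj:tildechitrees} asserts that the leading coefficient \emph{alone} already does. Knowing that the whole polynomial is a complete invariant says nothing about whether one coefficient is; the implication runs the other way, and Conjecture~\ref{conj:tildechitrees} is in fact a strengthening of Loehr--Warrington, not a consequence of it.
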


This motivates the following bounds on the degrees of the $G$-major polynomial for trees. 

\begin{corollary}\label{cor:treegmajordegree}
    Let $d \geq 2$. For a tree $T =([d],E)$, the degree of the $G$-major polynomial is bounded via
    \[
        \binom{d+1}{2} - \lfloor{1.5 d}\rfloor \leq 
        \mathrm{deg}\left(\sum_{\pi \in S_d} q^{\maj_G(\pi)}\right) \leq \binom{d+1}{2} - d -1 \, .
    \]
    Moreover, there exists a tree attaining each possible degree between the two bounds.
\end{corollary}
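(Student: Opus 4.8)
The plan is to pass to the \emph{chromatic sum} $\mu(G):=\min\{\sum_{v\in V}f(v): f \text{ a proper coloring of }G\}$ and invoke Corollary~\ref{cor:gmajordegree}, which identifies $\deg\!\big(\sum_{\pi\in S_d}q^{\maj_G(\pi)}\big)=\binom{d+1}{2}-\mu(G)$. Under this translation the two claimed degree bounds for a tree $T=([d],E)$ become precisely $d+1\le\mu(T)\le\lfloor 3d/2\rfloor$, and the interpolation assertion becomes the statement that for every integer $m$ with $d+1\le m\le\lfloor 3d/2\rfloor$ there is a tree on $d$ vertices with $\mu(T)=m$. So the whole corollary reduces to these three facts about chromatic sums of trees, each of which I would prove by elementary means.

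For the lower bound, since $d\ge 2$ a tree has at least one edge, so no proper coloring is constant; hence in any proper coloring the $d$ color values are positive and not all equal to $1$, which forces their sum to be at least $(d-1)+2=d+1$, i.e.\ $\mu(T)\ge d+1$, giving $\deg\le\binom{d+1}{2}-d-1$. For the upper bound, $T$ is connected and bipartite, so it has a unique bipartition $V=A\sqcup B$ into nonempty classes; choosing labels so that $|A|\le|B|$ we get $|A|\le\lfloor d/2\rfloor$, and the proper coloring sending $A\mapsto 2$ and $B\mapsto 1$ has color sum $2|A|+|B|=d+|A|\le d+\lfloor d/2\rfloor=\lfloor 3d/2\rfloor$. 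Hence $\mu(T)\le\lfloor 3d/2\rfloor$, giving $\deg\ge\binom{d+1}{2}-\lfloor 1.5d\rfloor$.

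For the interpolation I would use an explicit one-parameter family of spiders: for each $b$ with $0\le b\le\lfloor d/2\rfloor-1$ let $T_b$ be the tree on $d$ vertices consisting of a center $c$ together with $a:=d-1-2b\ge 1$ pendant leaves at $c$ and $b$ legs of length two attached to $c$. Its bipartition classes have sizes $a+b$ (the leaves and the midpoints of the legs) and $1+b$ (the center and the endpoints of the legs), so the two class colorings realize color sums $2a+3b+1$ and $a+3b+2$; since $a\ge 1$ this gives $\mu(T_b)\le a+3b+2=d+b+1$. The matching lower bound $\mu(T_b)\ge d+b+1$ I would obtain from a three-way case split on the color of $c$ in an arbitrary proper coloring: the color sum is at least $1+2a+3b$ if $f(c)=1$, at least $2+a+3b$ if $f(c)=2$, and at least $3+a+3b$ if $f(c)\ge 3$, using in each case that a length-two leg contributes at least $3$; the minimum of these three bounds is $a+3b+2=d+b+1$ because $a\ge 1$. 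Thus $\mu(T_b)=d+b+1$, and as $b$ ranges over $\{0,1,\dots,\lfloor d/2\rfloor-1\}$ this value sweeps out every integer from $d+1$ to $d+\lfloor d/2\rfloor=\lfloor 3d/2\rfloor$; by Corollary~\ref{cor:gmajordegree} the trees $T_b$ then realize every degree between the two bounds.

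The main obstacle is the lower bound $\mu(T_b)\ge d+b+1$: one has to make the per-leg estimate "a length-two leg contributes at least $3$" work uniformly across the three cases for $f(c)$, and one should separately note the small cases $d\in\{2,3\}$, where $\lfloor 3d/2\rfloor=d+1$ and the family collapses to a single star, so the two bounds coincide and the interpolation claim is vacuous but consistent. Everything else is either a direct appeal to Corollary~\ref{cor:gmajordegree} or a one-line bipartiteness argument.
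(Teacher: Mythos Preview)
Your proof is correct and follows essentially the same route as the paper: both reduce via Corollary~\ref{cor:gmajordegree} to bounding the chromatic sum $\Sigma(T)$ between $d+1$ and $\lfloor 3d/2\rfloor$ using the same bipartiteness and ``at least one edge'' arguments. For the interpolation you use spiders with $b$ legs of length two, whereas the paper uses broom-like trees $B(d-2b-1,2b+1)$ (a path of $2b+1$ vertices with $d-2b-1$ extra leaves at one end); the two families coincide for $b\le 1$ but differ for $b\ge 2$, and your case analysis on $f(c)$ actually supplies the lower-bound verification that the paper leaves to the reader with ``it is not hard to see.''
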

\begin{proof}
    We use a result of Kubicka and Schwenk~\cite[Theorem~3]{chromaticsums}.
    Let 
    \[
        \Sigma(T) := \min\left\{ \sum_{v \in V} f(v): f:V \to [m] \text{ a coloring} \right\}
    \]
    and $d := |V|$.
    We know every coloring of a tree has to use at least $2$ colors. 
    Then $\Sigma(T) \geq d+1$ via assigning one vertex the color $2$ and the rest the color $1$; a star graph attains this lower bound.
    Since trees are bipartite and thus $2$-colorable, we can color the vertices in the larger bipartite set with color $1$ and all the vertices in the smaller set as color $2$.
    So $\Sigma(T) \leq \ceil{\frac{d}{2}}+2 \floor{\frac{d}{2}} = \floor{1.5 d}$, and the path graph achieves this upper bound.
    Since $d+1 \leq \Sigma(T) \leq \floor{1.5 d}$,
    \[
        \binom{d+1}{2} - (d+1) \geq \binom{d+1}{2} - \Sigma(T) \geq \binom{d+1}{2} - \floor{1.5 d} \, ,
    \]
    Corollary~\ref{cor:gmajordegree} yields our first claim.
    
    To prove the second claim, it suffices to check there exists a tree $T$ with $\Sigma(T)=k$ for every $d+1 \leq k \leq \floor{1.5 d}$. 
    Let $B(m,n)$ denote the ``broom-like" tree where a path with $n$ vertices has $m$ leaves attached to one end, see Figure~\ref{fig:broomlikegraph} for an example. 
    Expressing $k=d+1+b$ for some $b$, it is not hard to
see that $\Sigma(B(d-2b-1,2b+1)) = k$.
\end{proof}

\begin{figure}
    \begin{tikzpicture}
    \begin{scope}[xshift=0, yshift=0, scale=1]    
        \vertex[fill](a1) at (0,0) {};
        \vertex[fill](a2) at (-1,1) {};
        \vertex[fill](a3) at (-1,0.333) {};
        \vertex[fill](a4) at (-1,-0.333) {};
        \vertex[fill] (a5) at (-1,-1) {};
        \vertex[fill] (a6) at (1,0) {};
        \vertex[fill](a7) at (2,0) {};
        

        \draw[thick] (a1) --  (a2);
        \draw[thick] (a1) --  (a3);
        \draw[thick] (a1) --  (a4);
        \draw[thick] (a1) --  (a5);
        \draw[thick] (a1) --  (a6);
        \draw[thick] (a6) --  (a7);
      
    \end{scope}
\end{tikzpicture}
    \caption{The ``broom-like" graph $B(4,3)$ as defined in the proof for Corollary~\ref{cor:treegmajordegree}.}\label{fig:broomlikegraph}
\end{figure}
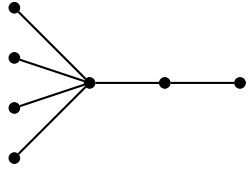

As a concluding aside for this section, observe that Stanley finds the degree of the polynomial
    \[
        W_P(q) : = \sum_{\sigma \in \jhset{P}} q^{\maj(\sigma)}
    \]
    for a naturally labeled poset $P$ as follows \cite[Section~3.15]{enumerativecombinatorics}.
    For $t \in P$, define $\delta(t)$ to be the number of vertices of the longest chain $t = t_1 < \cdots < t_{\delta(t)}$ of $P$ that starts at $t$. 
    We further define $\delta(P) = \sum_{t \in P} \delta(t)$.
    Stanley shows the degree of $W_P(q)$ equals $\binom{|P|+1}{2} - \delta(P)$, and $W_P(q)$ is monic, i.e., only one permutation achieves this maximal major index.
    We can connect this to our minimum sum coloring since we can compute the degree in two ways.
    For a graph $G$ on $[d]$ we can use Stanley's result and Theorem~\ref{thm:bijectiongstats} to compute the degree of $\sum_{\pi \in S_d} q^{\maj_G(\pi)} = \sum_{(\rho,\sigma)} q^{\maj(\sigma)}$ which is 
    \[
        \max_{\rho \in \acyclic{G}}\left\{ \binom{d+1}{2} - \delta(G_\rho) \right\} = \binom{d+1}{2} -
\min_{\rho \in \acyclic{G}} \left\{\delta(G_\rho) \right\}.
    \]
    By Corollary~\ref{cor:gmajordegree} this means
    \[
        \min_{\rho \in \acyclic{G}} \{\delta(G_\rho)\} = \min\left\{ \sum_{v \in V} f(v): f:V \to [m] \text{ a coloring} \right\}.
    \]
    For each acyclic orientation, this gives a candidate for the minimum sum coloring of graphs, namely a coloring $f:V(G) \to [k]$ defined by $f(t):=\delta(t)$ where $k$ is the number of elements of a longest chain in $G_\rho$.
    
    If we apply an alternative convention to maximize $\sum_{j \in \des(\sigma)} d-j$ over all $(\rho,\sigma)$, we can instead use the $G$-sequence colorings of permutations as described in the proof of Corollary~\ref{cor:gmajordegree} and Lemma~\ref{lem:minimumandcolorings}.
    This means that for each acyclic orientation, we have a candidate for the minimum sum coloring of graphs by assigning color based off the rank with respect to $G_\rho$ of an element $t \in G$, the number of vertices in the longest chain in $G_\rho$ ending at $t$.

\section{Symmetry}\label{sec:symmetry}

Theorem~\ref{thm:bijectiongstats} allows us to simplify some of the expressions arising in Section~\ref{sec:gstats} for the numerator polynomial of the $q$-chromatic generating function, showing that the coefficients of this numerator polynomial, each of which is a polynomials in $q$, are each palindromic up to a shift. 
We begin by introducing several important definitions required to describe the bijection established in Proposition~\ref{prop:ascentopposite} below.

Let $s:[d] \to [d]$ be the involution $s(i) := d+1 - i$.
For a graph $G$, an acyclic orientation $\rho \in \acyclic{G}$, and a natural labeling $\omega_\rho$ of $G_\rho$, we let $\rev{\rho}$ be the acyclic orientation obtained by reversing all the directions of $\rho$.
We then define
\[
\omega_\rho' :=s \circ \omega_{\rev{\rho}} \, .
\]
    Observe that $\omega_\rho'$ is a natural labeling. 

For each fixed $G$ and $\rho\in \acyclic{G}$, we next define a map sending a linear extension $\sigma \in \jhset{G_\rho,\omega_\rho}$ to $\op{\sigma}$, a linear extension of $G_{\rev{\rho}}$.
   Given $\sigma$, we know that there exists an order preserving map $\tau:G_\rho \to [d]$ such that $\sigma = \omega_\rho \circ \tau^{-1}$.
    We define 
    \[
    \op{\sigma}:= \omega_{\rev{\rho}}' \circ \tau^{-1} \circ s \, .
    \]
    In order to verify that $\op{\sigma} \in \jhset{G_{\rev{\rho}},\omega_{\rev{\rho}}'}$, we must check that $(\tau^{-1} \circ s)^{-1} = s \circ \tau$ is order preserving for $G_{\rev{\rho}}$. 
    If $i \leq j$ in $G_{\rev{\rho}}$, then $i \geq j$ in $G_\rho$ and so $\tau(i) \geq \tau(j)$, as $\tau$ is order preserving. Hence $d+1-\tau(i) \leq d+1 - \tau(j)$ which shows $s \circ \tau(i) \leq s \circ \tau(j)$. 

\begin{proposition}\label{prop:ascentopposite}
    Let $G$ be a graph on $[d]$ and for $\rho \in \acyclic{G}$, let $\omega_\rho$ be a natural labeling of $G_\rho$.
    There is a bijection 
    \[
        \psi: \{(\rho,\sigma) \mid \rho \in \mathcal{A}(G), \sigma \in \mathcal{L}(G_\rho,\omega_\rho) \} \to \{(\rho,\sigma) \mid \rho \in \mathcal{A}(G), \sigma \in \mathcal{L}(G_\rho,\omega_\rho') \}
        \]
        sending
        \[ (\rho,\sigma) \mapsto (\rev{\rho},\op{\sigma})
    \]
    with the property that $\asc(\sigma) = \{d-i: i \in \asc(\op{\sigma})\}$.
\end{proposition}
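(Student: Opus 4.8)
The plan is to reduce the statement to two transparent facts about the single word $\op{\sigma}$: that it is the ``reverse--complement'' of $\sigma$, and that $\psi$ is, in a precise sense, an involution. First I would unwind the definitions. Writing $\sigma = \omega_\rho \circ \tau^{-1}$ with $\tau\colon G_\rho \to [d]$ order preserving (and noting that $\tau = \sigma^{-1}\circ\omega_\rho$ is forced, since $\omega_\rho$ is a bijection, so $\op{\sigma}$ is unambiguous), the definition $\op{\sigma} = \omega'_{\rev{\rho}} \circ \tau^{-1}\circ s$ together with $\rev{\rev{\rho}} = \rho$ gives $\omega'_{\rev{\rho}} = s\circ\omega_{\rev{\rev{\rho}}} = s\circ\omega_\rho$, hence
\[
    \op{\sigma} \;=\; s\circ\omega_\rho\circ\tau^{-1}\circ s \;=\; s\circ\sigma\circ s .
\]
In one--line notation this says that the entry of $\op{\sigma}$ in position $j$ is $d+1-\sigma(d+1-j)$; well--definedness of the codomain, i.e.\ $\op{\sigma}\in\jhset{G_{\rev{\rho}},\omega'_{\rev{\rho}}}$, was already checked immediately before the statement (the witnessing order--preserving map being $s\circ\tau$).

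The ascent identity is then a one--line computation: for $j\in[d-1]$,
\begin{align*}
    j\in\asc(\op{\sigma})
    &\iff \op{\sigma}(j) < \op{\sigma}(j+1) \\
    &\iff (d+1)-\sigma(d+1-j) < (d+1)-\sigma(d-j) \\
    &\iff \sigma(d-j) < \sigma(d-j+1) \\
    &\iff d-j\in\asc(\sigma),
\end{align*}
and since $j\mapsto d-j$ is an involution of $[d-1]$, this is exactly $\asc(\sigma)=\{\,d-i : i\in\asc(\op{\sigma})\,\}$.

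For bijectivity I would exploit that the construction is symmetric under replacing the family $(\omega_\rho)_\rho$ by the family $(\omega'_\rho)_\rho$, which again consists of natural labelings, together with the fact that priming twice is the identity: $(\omega')'_\rho = s\circ\omega'_{\rev{\rho}} = s\circ s\circ\omega_{\rev{\rev{\rho}}} = \omega_\rho$, using that $s$ and $\rho\mapsto\rev{\rho}$ are involutions. Let $\psi'$ be the map produced by the same recipe applied to $(\omega'_\rho)_\rho$; it sends $\{(\rho,\sigma):\sigma\in\jhset{G_\rho,\omega'_\rho}\}$ to $\{(\rho,\sigma):\sigma\in\jhset{G_\rho,\omega_\rho}\}$. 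It then suffices to show $\psi'\circ\psi = \mathrm{id}$ for an arbitrary such family (applying this to the primed family yields $\psi\circ\psi'=\mathrm{id}$ as well): starting from $(\rho,\sigma)$ with $\sigma=\omega_\rho\circ\tau^{-1}$, we have $\psi(\rho,\sigma)=(\rev{\rho},\op{\sigma})$ with $\op{\sigma}=\omega'_{\rev{\rho}}\circ(s\circ\tau)^{-1}$, and $\psi'$ carries this to $\bigl(\rev{\rev{\rho}},\,(\omega')'_{\rev{\rev{\rho}}}\circ(s\circ\tau)^{-1}\circ s\bigr) = \bigl(\rho,\,\omega_\rho\circ\tau^{-1}\circ s\circ s\bigr) = (\rho,\sigma)$, where we used $\rev{\rev{\rho}}=\rho$, $(\omega')'_\rho=\omega_\rho$, $(s\circ\tau)^{-1}=\tau^{-1}\circ s$, and $s\circ s=\mathrm{id}$.

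No step presents a genuine difficulty; the one place to proceed carefully is the bookkeeping of how $s$, the arrow--reversal $\rho\mapsto\rev{\rho}$, and the priming of labelings interact under composition --- concretely the identities $\omega'_{\rev{\rho}}=s\circ\omega_\rho$ and $(\omega')'_\rho=\omega_\rho$ --- so I would keep those compositions written out explicitly rather than abbreviating them.
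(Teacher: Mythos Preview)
Your argument is correct and follows essentially the same route as the paper: both establish the ascent identity by unwinding $\op{\sigma}$ in terms of $\sigma$ and $s$, and both obtain bijectivity by composing the construction with itself and invoking $\rev{\rev{\rho}}=\rho$, $(\omega')'_\rho=\omega_\rho$, and $s\circ s=\mathrm{id}$. Your explicit identification $\op{\sigma}=s\circ\sigma\circ s$ and your care in distinguishing $\psi$ from $\psi'$ (rather than calling $\psi$ an ``involution'' between two formally different sets) are tidier than the paper's presentation, but the underlying ideas are identical.
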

\begin{proof}   
    We claim $\psi$ is an involution and therefore also a bijection.
    Consider $\psi(\psi(\sigma,\rho)) =(\rev{(\rev{\rho})}, \op{(\op{\sigma})} )$.
    For $\rho \in \acyclic{G}$ we know $\rev{(\rev{\rho})} = \rho$, our labels for $G_\rho$ are $(\omega_\rho')' = s \circ s \circ \omega_\rho = \omega_\rho$, and
    \[
        \op{(\op{\sigma})}(i) = (\omega_\rho')' \circ (\tau^{-1} \circ s) \circ s = \omega_\rho \circ \tau^{-1} = \sigma
    \]
    and so $\psi$ is a bijection. 

    Moreover, the ascents under this bijection give our desired property. For $\rho \in \acyclic{G}$ and $\sigma \in \jhset{G_\rho,\omega_\rho}$,
    \begin{align*}
        j \in \asc(\sigma) &\Leftrightarrow \omega_\rho \circ \tau^{-1}(j) < \omega_\rho \circ \tau^{-1}(j+1) \\
        &\Leftrightarrow d+1 - \omega_\rho \circ \tau^{-1}(j) > d+1- \omega_\rho \circ \tau^{-1}(j+1) \\
        &\Leftrightarrow \omega_\rho' \circ \tau^{-1}(j) > \omega_\rho' \circ \tau^{-1}(j+1) \\
        &\Leftrightarrow \omega_\rho' \circ \tau^{-1} \circ s (d+1-j) > \omega_\rho' \circ \tau^{-1} \circ s (d-j) \\
        &\Leftrightarrow d-j \in \asc(\op{\sigma}) \, ,
    \end{align*} 
    hence $\asc(\sigma) = \{d-i: i \in \asc(\op{\sigma})\}$.
\end{proof}

Using Proposition~\ref{prop:ascentopposite}, we can simplify the expression of the numerator polynomial of the $q$-chromatic generating function for the linear form $\mathbbm{1} \in \mathbb{Z}^d$.
\begin{corollary}\label{cor:qznumeratorsymmetry}
    For $G$ a graph on $[d]$,
    \[
        \sum_{\pi \in S_d} q^{d + \sum_{j \in \asc_G(\pi)} d-j} z^{\ascnum_G(\pi) +1} = \sum_{\pi \in S_d} q^{d + \sum_{j \in \asc_G(\pi)} j} z^{\ascnum_G(\pi) +1} = \sum_{\pi \in S_d} q^{\binom{d+1}{2} - \maj_G(\pi)} z^{\ascnum_G(\pi) +1} .
    \]
\end{corollary}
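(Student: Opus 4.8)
The plan is to treat the two equalities separately. The second one, $d + \sum_{j \in \asc_G(\pi)} j = \binom{d+1}{2} - \maj_G(\pi)$, is an elementary termwise identity: since $\pi(i) \ne \pi(i+1)$, at each position $i \in [d-1]$ exactly one of the defining conditions for a $G$-ascent or a $G$-descent holds, so $\des_G(\pi)$ and $\asc_G(\pi)$ partition $[d-1]$. Consequently $\maj_G(\pi) + \sum_{j \in \asc_G(\pi)} j = \sum_{i=1}^{d-1} i = \binom{d}{2}$, and adding $d$ together with $\binom{d+1}{2} = \binom{d}{2} + d$ gives the claim. As this holds for every $\pi \in S_d$, the corresponding generating functions agree term by term, and it remains to prove the first equality.

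For the first equality I would pass through the sum over pairs $(\rho,\sigma)$ of acyclic orientations and linear extensions. First, by Theorem~\ref{thm:bijectiongstats} applied with the rank labelings $\eta_\rho$ of the induced posets $G_\rho$, the bijection $\pi \mapsto (\rho_\pi,\sigma_\pi)$ satisfies $\des_G(\pi) = \des(\sigma_\pi)$, hence $\asc_G(\pi) = \asc(\sigma_\pi)$ and $\ascnum_G(\pi) = \ascnum(\sigma_\pi)$, so
\[
\sum_{\pi \in S_d} q^{d + \sum_{j \in \asc_G(\pi)} j} z^{\ascnum_G(\pi)+1} = \sum_{(\rho,\sigma)} q^{d + \sum_{j \in \asc(\sigma)} j} z^{\ascnum(\sigma)+1},
\]
the right-hand sum running over $\rho \in \acyclic{G}$ and $\sigma \in \jhset{G_\rho,\eta_\rho}$. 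Next I would apply the involution $\psi$ of Proposition~\ref{prop:ascentopposite}, writing $(\rho',\sigma') := \psi(\rho,\sigma) = (\rev{\rho},\op{\sigma})$: it is a bijection onto the pairs with $\sigma' \in \jhset{G_{\rho'},\eta_{\rho'}'}$, it preserves $\ascnum$, and by the property $\asc(\sigma) = \{d - i : i \in \asc(\op{\sigma})\}$ the exponent transforms as $\sum_{j \in \asc(\sigma)} j = \sum_{i \in \asc(\sigma')} (d - i)$. Hence the displayed sum equals $\sum_{(\rho',\sigma')} q^{d + \sum_{i \in \asc(\sigma')} d - i} z^{\ascnum(\sigma')+1}$ over pairs with the natural labelings $\eta'$. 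Finally, Corollary~\ref{cor:linearextensionnumerator}, multiplied through by $q^{d}z$, states precisely that this last sum --- for \emph{any} system of natural labelings of the induced posets --- equals $\sum_{\pi \in S_d} q^{d + \sum_{j \in \asc_G(\pi)} d-j} z^{\ascnum_G(\pi)+1}$, which is the left-hand side of the first equality. Chaining these identities finishes the argument.

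The only delicate point, and the step I expect to need the most care, is the bookkeeping of labelings: the map $\psi$ does not fix the chosen natural labeling but sends $\omega_\rho$ to $\omega_\rho' = s \circ \omega_{\rev{\rho}}$, so the intermediate sums are indexed by linear extensions with respect to different labelings. What rescues the argument is that Corollary~\ref{cor:linearextensionnumerator} (equivalently Theorem~\ref{thm:qchromaticgenfunctiondoublesum}) is stated for an arbitrary choice of natural labelings, so one may freely substitute the labelings $\eta'$ produced by $\psi$; one should also recall, as already noted before Proposition~\ref{prop:ascentopposite}, that each $\omega_\rho'$ really is a natural labeling, so that the corollary does apply to it, while the fact that $\psi$ preserves $\ascnum$ is immediate from $\asc(\sigma) = \{d-i : i \in \asc(\op{\sigma})\}$.
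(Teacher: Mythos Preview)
Your proof is correct and follows essentially the same approach as the paper: both arguments chain Theorem~\ref{thm:bijectiongstats}, Proposition~\ref{prop:ascentopposite}, and Corollary~\ref{cor:linearextensionnumerator} to establish the first equality, and both derive the second equality from the elementary identity $\maj_G(\pi) + \sum_{j \in \asc_G(\pi)} j = \binom{d}{2}$. The only difference is that you traverse the first chain in the opposite order (starting from the middle expression rather than the left one), which if anything makes the labeling bookkeeping you flag slightly cleaner, since the step requiring an arbitrary natural labeling comes last rather than being interleaved with the step requiring the rank labeling.
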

\begin{proof}
    To prove the first equality of the corollary we have 
    \[
        \sum_{\pi \in S_d} q^{\sum_{j \in \asc_G(\pi)} d-j} z^{\ascnum_G(\pi)}= \sum_{(\rho,\sigma)} q^{\sum_{j \in \asc(\sigma)} d-j} z^{\ascnum(\sigma)}
    \] 
    by Corollary~\ref{cor:linearextensionnumerator}.
    Further applying Proposition~\ref{prop:ascentopposite} results in
    \[
        \sum_{(\rho,\sigma)} q^{\sum_{j \in \asc(\sigma)} d-j} z^{\ascnum(\sigma)} = \sum_{(\rho,\sigma)} q^{\sum_{d-j \in \asc(\op{\sigma})} d-j} z^{\ascnum(\op{\sigma})} = \sum_{(\rev{\rho},\op{\sigma})} q^{\sum_{i \in \asc({\sigma})} i} z^{\ascnum({\sigma})} = \sum_{(\rho,\sigma)} q^{\sum_{i \in \asc({\sigma})} i} z^{\ascnum({\sigma})}
    \]
    where the first equality holds by applying the permutation $\op{\sigma}$ and the last equality holds, as we have a bijection indexing the sum.
    Moreover any natural labeling of induced posets produces the same sum.
    We apply Theorem~\ref{thm:bijectiongstats}, so that 
    \[
        \sum_{(\rho,\sigma)} q^{\sum_{i \in \asc({\sigma})} i} z^{\ascnum({\sigma})} = \sum_{\pi \in S_d}
q^{\sum_{j \in \asc(\pi)} j} z^{\ascnum_G(\pi)} ,
    \]
    giving the first equality of the corollary.

    To derive the second equality, we know 
    \[
        \binom{d}{2} = \sum_{j \in [d-1]} j = \sum_{j \in \asc_G(\pi)} j + \sum_{i \in \des_G(\pi)} i  = \sum_{j \in \asc_G(\pi)} j + \maj_G(\pi)
    \]
    for any $\pi \in S_d$. This means
    \[
        d + \sum_{j \in \asc_G(\pi)} j = d + \binom{d}{2} - \maj_G(\pi) = \binom{d+1}{2} - \maj_G(\pi) \, ,
    \]
    which gives the desired equality.
\end{proof}

Another interesting consequence of Proposition~\ref{prop:ascentopposite} is the symmetry of the coefficients of the numerator polynomial of the $q$-chromatic generating function for the linear form $\mathbbm{1} \in \mathbb{Z}^d$.
\begin{corollary}\label{cor:palindromic}
   For a graph $G$ on $[d]$ with chromatic number $\xi$, write
    \[
        \sum_{\pi \in S_d} q^{d+\sum_{j \in \asc_G(\pi)} d-j} z^{\ascnum_G(\pi)+1} = a_\xi (q) z^\xi+ \cdots + a_d(q) z^d 
    \]
    with $a_i(q) := \sum q^{d+\sum_{j \in \asc_G(\pi)} d-j}$ where we sum over all permutations $\pi$ such that $\ascnum_G(\pi)+1 = i$. 
    For each $i \in [\xi,d]$, the polynomial $a_i(q)$ is shifted palindromic, i.e., $q^{d(i+1)} \, a_i( \frac 1 q) =a_i(q)$.
\end{corollary}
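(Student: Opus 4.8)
The plan is to fix $i\in[\xi,d]$ and prove directly that $q^{d(i+1)}a_i(1/q)=a_i(q)$ by first normalizing the exponents occurring in $a_i(q)$ and then reading off the coefficient of $z^i$ from the symmetry already established in Corollary~\ref{cor:qznumeratorsymmetry}. The key observation is that the first identity in Corollary~\ref{cor:qznumeratorsymmetry} is graded by $z$, so extracting its $z^i$-coefficient yields precisely the palindromy of the single polynomial $a_i(q)$, provided the shift is pinned down correctly.

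First I would do the exponent bookkeeping. Every permutation $\pi$ contributing to $a_i(q)$ satisfies $\ascnum_G(\pi)+1=i$, i.e.\ $|\asc_G(\pi)|=i-1$. Hence
\[
    d+\sum_{j\in\asc_G(\pi)}(d-j)=d+(i-1)d-\sum_{j\in\asc_G(\pi)}j=di-\sum_{j\in\asc_G(\pi)}j,
\]
so that $a_i(q)=\sum_{\pi}q^{\,di-\sum_{j\in\asc_G(\pi)}j}$, where the sum is over all $\pi\in S_d$ with $\ascnum_G(\pi)=i-1$. Replacing $q$ by $1/q$ and multiplying by $q^{d(i+1)}$ gives
\[
    q^{d(i+1)}a_i(1/q)=\sum_{\pi:\ \ascnum_G(\pi)=i-1}q^{\,d(i+1)-di+\sum_{j\in\asc_G(\pi)}j}=\sum_{\pi:\ \ascnum_G(\pi)=i-1}q^{\,d+\sum_{j\in\asc_G(\pi)}j}.
\]

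Next I would compare coefficients of $z^i$ in the first equality of Corollary~\ref{cor:qznumeratorsymmetry},
\[
    \sum_{\pi\in S_d}q^{d+\sum_{j\in\asc_G(\pi)}(d-j)}z^{\ascnum_G(\pi)+1}=\sum_{\pi\in S_d}q^{d+\sum_{j\in\asc_G(\pi)}j}z^{\ascnum_G(\pi)+1}.
\]
The $z^i$-coefficient on the left is exactly $a_i(q)$ by definition, while the $z^i$-coefficient on the right is exactly the sum computed for $q^{d(i+1)}a_i(1/q)$ above. Therefore $a_i(q)=q^{d(i+1)}a_i(1/q)$, the asserted shifted palindromy; as this holds for every $i\in[\xi,d]$, the corollary follows.

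The substantive content has already been done upstream: it is the order-reversing involution $(\rho,\sigma)\mapsto(\rev{\rho},\op{\sigma})$ of Proposition~\ref{prop:ascentopposite}, combined with the fact (Theorem~\ref{thm:bijectiongstats} and Corollary~\ref{cor:linearextensionnumerator}) that the numerator polynomial may be computed over pairs of acyclic orientations and linear extensions under any natural labeling. The only points requiring care are that $\psi$ preserves $\ascnum$ — which it does, since $\asc(\sigma)=\{d-i:i\in\asc(\op{\sigma})\}$ forces $|\asc(\sigma)|=|\asc(\op{\sigma})|$, so the symmetry respects the grading by $z$ — and that the normalizing shift is exactly $d(i+1)$, which the exponent computation confirms. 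If one prefers not to cite Corollary~\ref{cor:qznumeratorsymmetry}, the same argument runs directly: restrict $\psi$ to pairs with $\ascnum=i-1$, note $\sum_{j\in\asc(\sigma)}(d-j)=\sum_{i\in\asc(\op{\sigma})}i$, and translate back to permutations via Theorem~\ref{thm:bijectiongstats}.
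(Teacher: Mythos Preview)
Your proof is correct and follows essentially the same approach as the paper: both invoke Corollary~\ref{cor:qznumeratorsymmetry}, extract the $z^i$-coefficient, and carry out the exponent bookkeeping to identify the resulting sum $\sum q^{d+\sum_{j\in\asc_G(\pi)} j}$ with $q^{d(i+1)}a_i(1/q)$. The only cosmetic difference is the order of operations---you compute $q^{d(i+1)}a_i(1/q)$ first and then match it against the right-hand side of Corollary~\ref{cor:qznumeratorsymmetry}, whereas the paper rewrites $a_i(q)$ via Corollary~\ref{cor:qznumeratorsymmetry} first and then expands $q^{d(i+1)}a_i(1/q)$---but the substance is identical.
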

\begin{proof}
    For $i \in [\xi,d]$ consider $a_i(q)$. 
    By Corollary~\ref{cor:qznumeratorsymmetry}, we have $a_i(q) = \sum q^{d+\sum_{j \in \asc_G(\pi)} j}$ and so
    \begin{align*} 
        q^{d(i+1)} \, a_i \left( \frac 1 q\right) &= q^{d(i+1)} \sum_{\substack{\pi \in S_d \\ \asc_G(\pi)+1=i}} q^{-d-\sum_{j \in \asc_G} d-j } \\
        &= q^{d(\ascnum_G(\pi)+2)} \sum_{\substack{\pi \in S_d \\ \asc_G(\pi)+1=i}} q^{-d- d \cdot \ascnum_G(\pi) + \sum_{j \in \ascnum_G(\pi)} j }\\
        &=  \sum_{\substack{\pi \in S_d \\ \asc_G(\pi)+1=i}} q^{d\cdot\ascnum_G(\pi)+2d-d- d \cdot \ascnum_G(\pi) + \sum_{j \in \ascnum_G(\pi)} j }\\
        &= \sum_{\substack{\pi \in S_d \\ \asc_G(\pi)+1=i}} q^{d+\sum_{j \in \asc_G} j } \\
        &= a_i(q) \, .
    \end{align*}
    This means after factoring out the lowest power of $q$ in $a_i(q)$, we have a palindromic polynomial.
\end{proof}

\begin{example}
    One can see the palindromic polynomials in Example~\ref{ex:qchromaticgenfunction}.
    Moreover, the star graph in this example shows these polynomials may not necessarily be unimodal. 
\end{example}

    We remark further that for a fixed graph $G$ on $[d]$ and a fixed value $n \in \mathbb{Z}_{>0}$, the polynomial $\chi_G^\mathbbm{1}(q,n)$ is a
shifted palindromic polynomial in $q$.
    For every $n$-coloring $c$, we have an $n$-coloring $c' := n+1-c$ which is a bijection.
    Hence
    \[
        {q^{d(n+1)} \chi_G^\mathbbm{1}\left(\frac{1}{q},n\right) =\sum_{\substack{\text{\tiny colorings} \\ c:[d] \to [n]}} q^{\sum_{i \in [d]} n+1 -\sum_{i \in [d]} c(i)} = \sum_{\substack{\text{\tiny colorings} \\ c:[d] \to [n]}} q^{\sum_{i \in [d]} c'(i)} =  \sum_{\substack{\text{\tiny colorings} \\ c':[d] \to [n]}} q^{\sum_{i \in [d]} c(i)} =  \chi_G^\mathbbm{1}(q,n)}.
    \]

\bibliographystyle{amsplain}
\bibliography{ref}

\end{document}